\theoremstyle{definition}
\newcommand{\mat}[4]{\left[ \begin{array}{cc} #1 & #2 \\ #3 & #4 \end{array} \right] }
\newcommand{\RR}{\mathbbmss{R}}
\newcommand{\CC}{\mathbbmss{C}}
\newcommand{\mary}[2]{{#1}^{\parallel #2}}
\DeclareMathOperator{\matt}{\bf{mat}}
\DeclareMathOperator{\ten}{\bf{ten}}
\DeclareMathOperator{\DCT}{\bf{DCT}}
\DeclareMathOperator{\IDCT}{\bf{IDCT}}
\DeclareMathOperator{\rk}{rank}
\DeclareMathOperator{\prob}{Prob}
\DeclareMathOperator{\ind}{Ind}
\DeclareMathOperator{\pinv}{pinv}
\DeclareMathOperator{\svd}{svd}
\DeclareMathOperator{\dct}{dct}
\DeclareMathOperator{\eye}{eye}
\DeclareMathOperator{\diag}{diag}
\DeclareMathOperator{\vecc}{vec}
\DeclareMathOperator{\ones}{ones}
\newcommand{\ac}{\mathcal{A}}
\newcommand{\bc}{\mathcal{B}}
\newcommand{\cc}{\mathcal{C}}
\newcommand{\gc}{\mathcal{G}}
\newcommand{\ic}{\mathcal{I}}
\newcommand{\pc}{\mathcal{P}}
\newcommand{\qc}{\mathcal{Q}}
\newcommand{\uc}{\mathcal{U}}
\newcommand{\vc}{\mathcal{V}}
\newcommand{\scc}{\mathcal{S}}
\newcommand{\xc}{\mathcal{X}}
\newcommand{\wc}{\mathcal{W}}
\newtheorem{example}{Example}[section]
\newtheorem{theorem}{Theorem}[section]
\newtheorem{lemma}{Lemma}[section]
\newtheorem{definition}{Definition}[section]
\title{The generalized inverses of tensors via the C-product}
\author{
Hongwei Jin,\thanks{ School of Mathematics and Physics;  Center for Applied Mathematics of Guangxi, Guangxi Minzu University, 530006, Nanning, PR China. { E-mail address}:  jhw$\_$math@126.com} \quad
Shumin Xu,\thanks{School of Mathematics and Physics, Guangxi Minzu University, 530006,
Nanning, PR China. { E-mail address}: 18438596256@163.com }\quad
Hongjie Jiang,\thanks{Corresponding author. School of Mathematics and Physics; Center for Applied Mathematics of Guangxi, Guangxi Minzu University, 530006,
Nanning, PR China. { E-mail address}:  hongjiejiang@yeah.net}\quad
Xiaoji Liu\thanks{School of Education, Guangxi Vocational Normal University, 530007, Nanning, PR China. { E-mail address}:  xiaojiliu72@126.com}
}
\date{}
\begin{document}

\begin{spacing}{1.1}

\maketitle

\begin{abstract}

This paper studies the issues about the generalized inverses of tensors under the C-product. The aim of this paper is threefold.  Firstly,  this paper  present the definition of the Moore-Penrose inverse, Drazin inverse of tensors under the C-product. Moreover, the inverse along a tensor is also introduced. Secondly, this paper gives some other expressions of the generalized inverses of tensors by using several decomposition forms of tensors. Finally, the algorithms for computing the Moore-Penrose inverse, Drazin inverse of tensors and the inverse along a tensor are established.
\\

\noindent{\bf Keywords}: C-product, Tensors, Moore-Penrose, Drazin inverse, Inverse along a tensor  \\

\noindent AMS classification: 15A18, 15A69.
\end{abstract}

% % % % % % % % % % % % % % % % % % % % %

\section{Introduction}

In recent years, the studies of tensors or the multidimensional array have become more popular. A complex tensor can be regarded as a multidimensional array of data, which takes the form $\ac=(a_{{i_1}\ldots{i_p}})\in\CC^{n_1\times n_2\times \cdots\times n_p}$. The order of a tensor is the number of dimensions which is also called ways or modes. Therefore, the well-known vectors and matrices are called first-order tensors and second-order tensors. This paper studies the third-order tensors.

Higher-order tensors have been used in various fields, such as psychometrics\cite{K}, chemometrics\cite{NCT}, face recognition\cite{HKBH} and image and signal processing\cite{Comon, Lathauwer, Nagy, Sidiropoulos, Hoge, Rezghi}, etc. Sun et al. \cite{Sun} introduced the notion of the inverse of an even-order tensor under the Einstein product and called it as the Moore-Penrose inverse of tensors. Sun et al. \cite{Sun2} defined the $\{i\}$-inverse and group inverse of tensors based on a general product of tensors, and investigated properties of the generalized inverses of tensors.
Miao et al. \cite{Miao} gave the definition of the generalized tensor function by using the tensor singular value decomposition.  Then, the Cauchy integral formula for tensors were established by taking the advantage of the partial isometry tensors. Moreover, the concept of invariant tensor cones was proposed.
Miao et al. \cite{MQW}  investigated the tensor similar relationship and proposed the T-Jordan canonical
form based on the tensor T-product. Meanwhile, the T-polar, T-LU, T-QR and T-Schur decompositions of tensors were obtained. Besides, the T-group inverse and T-Drazin inverse were studied.
Panigrahy et al. \cite{PBM} studied some
more identities involving the Moore-Penrose inverses of tensors. Also, a few necessary and sufficient conditions of the reverse order
law for the Moore-Penrose inverse of tensors via the Einstein
product were obtained.
Behera et al. \cite{BSMN} researched several generalized inverses of tensors over a commutative ring and a
non-commutative ring. Algorithms for computing the inner inverses, the
Moore-Penrose inverse, and weighted Moore-Penrose inverse of tensors were also proposed. In the final, the application to the image deblurring problem was presented.
Liu et al. \cite{Liu} studied the dual tensor with dual index one based on the T-product. Moreover, the solution of
the dual linear system was presented by taking the advantage of the core inverse of the tensor. The concepts of the dual Moore-Penrose inverse and the group inverse were also established.
Cong et al. \cite{Cong2} established the T-core-EP decomposition of tensors. Moreover, a canonical form and some characterizations of the T-core-EP inverse were given. In the final,  the perturbation bounds for the T-core-EP inverse were studied.
Sahoo et al. \cite{Sahoo} introduced the definitions of the core and the core-EP inverses of the tensors. Some properties, characterizations and representations of the core and the core-EP inverses were given.
Jin et al. \cite{Jin} established the generalized inverse of tensors by using tensor equations. Moreover, the authors investigated the least squares solutions of tensor equations. Behera et al. \cite{Behera} had a further study on the generalized inverses of tensors. Several characterizations of generalized inverses of tensors are provided. Besides, a new method for computing the Moore-Penrose inverse of a tensor was obtained. Ji et al. \cite{JW} extended
the notion of the Drazin inverse of a square matrix to an even-order square tensor. Also, the authors obtained the expression of the Drazin inverse by using the core-nilpotent decomposition. Behera et al. \cite{Behera2} further elaborated the theory of the Drazin inverse and $\wc$-weighted Drazin inverse of tensors. Moreover, different types of methods were built to compute the Drazin inverse of tensors. Ben\'itez et al. \cite{bbj} studied one-sided (b, c)-inverses of arbitrary matrices as well as one-sided inverses along a (not necessarily square) matrix. In addition, the (b, c)-inverse and the inverse along an element were also researched in the context of rectangular matrices. Kolda et al. \cite{KB} provided an overview of higher-order tensor decompositions and their applications. Two particular
tensor decompositions: the CP decomposition and the Tucker decomposition were introduced.

Kernfeld et al. \cite{KKA} defined a new tensor-tensor product---Cosine Transform Product, referred to as C-product for short. And it had been shown that the C-product can be implemented efficiently using DCT. In addition, the authors indicate that one can use C-product to conveniently specify a
discrete image blurring model and the image restoration model.
Xu et al. \cite{XZN} indicated that the
advantages of using DCT are: (a) the complex calculation is not involved in the cosine transform
based singular value decomposition, so the computational costs can be saved; (b) the
intrinsic reflexive boundary condition along the tubes in the third dimension of tensors is employed,
so its performance would be better than that by using the periodic boundary condition in DFT. Moreover, numerical examples showed that the efficiency by using the C-product is two
times faster than that by using the T-product and also the errors of video and multispectral image completion
by using DCT are smaller than those by using DFT. Bentbib et al. \cite{BEJR} explored new applications of the C-product. They proposed new methods for the problem of the third-order tensor
completion in combination with the TV regularization
procedure and tensor robust principal component analysis by using the C-product. Examples are presented to verify the effectiveness of the presented approach. Based on these background, we will study the theory of the generalized inverses of tensors via the C-product in this paper.

%Stanimirovi\'{c} et al.\cite{SPKS} established efficient evaluation of the full-rank QDR decomposition. A method and algorithm for efficient symbolic computation of $A^{(2)}_{T,S}$ inverses of a given rational matrix $A$ was defined using the full-rank QDR decomposition of an appropriate rational matrix $W$.

This paper is organized as follows. In Section 2, we give the terms and symbols needed to be used in this paper. Then, we introduce the C-product of two tensors and some properties of it. In Section 3, we firstly  define the Moore-Penrose inverse of tensors via the C-product. Then, we provide some decompositions of the tensor, including C-SVD, C-QR decomposition, C-Schur decomposition, C-full rank decomposition, C-QDR decomposition and C-HS decomposition. Furthermore, we use these decompositions to give the expressions for the Moore-Penrose inverse of tensors. In Section 4, we study the Drazin inverse of the tensor under the C-product. This part gives the definition and a few properties for the Drazin inverse of tensors, and provide several expressions for the Drazin inverse of tensors. In Section 5, we define the inverse along a tensor under the C-product. Some expressions of the class of the inverse are obtained. Moreover, an algorithm for computing the inverse along a tensor  is built.
In the last section, we establish an application on higher-order Markov Chains concerning the group inverse of the tensor.
\section{Preliminaries}\label{two}

In this paper, we denote vectors, matrices, three or higher order tensors like $\mathbf{a}, \mathbf{A}, \mathcal{A}$, respectively. Also, $\mathbf{a}_i$, $\mathbf{A}_{ij}$ and $\mathcal{{A}}_{i_1i_2...i_p}$ are the elements of the vector $\mathbf{a}$, matrix $\mathbf{A}$ and tensor $\mathcal{A}$, respectively. The frontal slice of tensor $\mathcal{A}$ is $\mathcal{A}(:, :, i)$. We denote the frontal slice as $\mathcal{A}^{(i)}$
for simplicity. When fixing two indices of the third order tensor, we can get the fiber. The mode-3
fiber is also called tube, denoted as $\mathcal{A}(i, j, :)$. We denote $\overline{\mathbf{a}}$ the tube of the tensor $\mathcal{A}$.  We can vectorize a tube by $\mathbf{a}=\vecc(\overline{\mathbf{a}})$.

\subsection{C-product}

\begin{definition}{\rm\cite{KKA}}
Let $\ac \in \CC^{n_1\times n_2\times n_3}$ and
$\bc \in \CC^{n_2 \times l \times n_3}$. The \textbf{face-wise product} $\mathcal{A}\triangle\mathcal{B}$ is defined as
$$(\mathcal{A}\triangle\mathcal{B})^{(i)}=\mathcal{A}^{(i)}\mathcal{B}^{(i)}.$$
\end{definition}

\begin{definition}{\rm\cite{KKA}}
Let $\ac \in \CC^{n_1\times n_2\times n_3}$. $\mathcal{A}^{(1)}, \mathcal{A}^{(2)},...,\mathcal{A}^{(n_3)}$ are its frontal slices. Then we
 use $\matt(\mathcal{A})$ to denote the block Toeplitz-plus-Hankel matrix
\begin{equation}\label{2}
  \matt(\mathcal{A})=\left[
                       \begin{array}{ccccc}
                         \mathcal{A}^{(1)} & \mathcal{A}^{(2)} & \ldots &\mathcal{A}^{(n_3-1)} &\mathcal{A}^{(n_3)} \\
                         \mathcal{A}^{(2)} & \mathcal{A}^{(1)} & \ldots &\mathcal{A}^{(n_3-2)} &\mathcal{A}^{(n_3-1)} \\
                         \vdots & \vdots &  &\vdots &\vdots \\
                         \mathcal{A}^{(n_3-1)} & \mathcal{A}^{(n_3-2)} & \ldots &\mathcal{A}^{(1)}
                         & \mathcal{A}^{(2)} \\
                         \mathcal{A}^{(n_3)} & \mathcal{A}^{(n_3-1)} & \ldots &\mathcal{A}^{(2)}
                         & \mathcal{A}^{(1)} \\
                       \end{array}
                     \right]+
\left[
                       \begin{array}{ccccc}
                         \mathcal{A}^{(2)} & \mathcal{A}^{(3)} & \ldots & \mathcal{A}^{(n_3)} & O \\
                         \mathcal{A}^{(3)} & \mathcal{A}^{(4)} & \ldots & O & \mathcal{A}^{(n_3)}\\
                         \vdots &\vdots &  & \vdots & \vdots \\
                         \mathcal{A}^{(n_3)} & O &\ldots & \mathcal{A}^{(4)} & \mathcal{A}^{(3)} \\
                         O & \mathcal{A}^{(n_3)} &\ldots  & \mathcal{A}^{(3)} & \mathcal{A}^{(2)} \\
                       \end{array}
                     \right],
\end{equation}
where O is $n_1\times n_2$ zero matrix.
\end{definition}

\begin{definition}{\rm\cite{KKA}}
Let $\ten(\cdot)$ be the inverse operation of the $\matt(\cdot)$, i.e.,
\begin{equation*}
  \ten(\matt(\mathcal{A}))=\mathcal{A}.
\end{equation*}
\end{definition}

\begin{definition}{\rm\cite{KKA}}
Let $\ac \in \CC^{n_1\times n_2\times n_3}$ and $\bc \in \CC^{n_2 \times l \times n_3}$. The
\textbf{cosine transform product}, which is called C-product for short, is defined as
\begin{equation*}
  \ac*_c\bc=\ten(\matt(\mathcal{A})\matt(\mathcal{B})).
\end{equation*}
\end{definition}

Let $\mathbf{\overline{y}}$ be a $1\times1\times n_3$ tensor, then $\matt(\mathbf{\overline{y}})$ is a $1\cdot n_3\times1\cdot n_3$ Toeplitz-plus-Hankel matrix as defined in (\ref{2}), which each blocks are $1\times1$. Let $\mathbf{C}_{n_3}$ denote the $n_3\times n_3$ orthogonal DCT matrix defined in \cite{NCT}, which can be computed in Matlab by using $\mathbf{C}_{n_3}=\dct(\eye(n_3))$. Moreover, one has
\begin{equation*}
  \mathbf{C}_{n_3}\matt(\mathbf{\overline{y}})\mathbf{C}_{n_3}^T=\mathbf{D}=\diag(\mathbf{d}),
\end{equation*}
where $\mathbf{d}=\mathbf{W}^{-1}(\mathbf{C}_{n_3}\matt(\mathbf{\overline{y}})\mathbf{e}_1)$, $\mathbf{W}=\diag(\mathbf{C}_{n_3}(:,1))$, $\mathbf{e}_1=[1,0,...,0]^T$.

Notice that, $\matt(\mathbf{\overline{y}})\mathbf{e}_1=(\mathbf{I}+\mathbf{Z})\vecc(\mathbf{\overline{y}})$, where $\vecc(\mathbf{\overline{y}})$ means the vectorization of $\mathbf{\overline{y}}$, $\mathbf{Z}$ is the $n_3\times n_3$ singular circulant upshift matrix, which can be computed in Matlab by using  $\mathbf{Z}=\diag(\ones(n_3-1,1),1)$. Hence, we have
\begin{equation}\label{l201}
  \mathbf{d}=\mathbf{W}^{-1}\mathbf{C}_{n_3}(\mathbf{I}+\mathbf{Z})\vecc(\mathbf{\overline{y}})
  =\mathbf{M}\vecc(\mathbf{\overline{y}}).
\end{equation}

\begin{definition}\label{d005}{\rm\cite{KKA}}
Let $L : \CC^{1\times 1\times n_3}\rightarrow\CC^{1\times 1\times n_3}$ is an invertible linear transform. Define $$\vecc(L(\overline{\mathbf{y}}))=\mathbf{M}{\mathbf{y}},$$
where $\mathbf{y}=\vecc(\overline{\mathbf{y}})$,  $\mathbf{M}=\mathbf{W}^{-1}\mathbf{C}_{n_3}(\mathbf{I}+\mathbf{Z})$.
\end{definition}

Notice that an $n_1\times n_2\times n_3$ tensor can be seen as an $n_1\times n_2$ matrix whose (i, j)th element $\mathbf{\overline{a}}_{ij}=(\mathcal{A})_{ij}$ are the tube fibers in $\CC^{1\times1\times n_3}$.

\begin{definition}{\rm\cite{KKA}}
Let $\ac \in \CC^{n_1\times n_2\times n_3}$. Then, $L(\mathcal{A})=\widehat{{\mathcal{A}}}\in\CC^{n_1\times n_2\times n_3}$ with tube fibers
\begin{equation*}
  \mathbf{\widehat{{a}}}_{ij}=(\widehat{\mathcal{A}})_{ij}=L(\overline{\mathbf{{a}}}_{ij}), \ i=1,\ldots,n_1, \
  j=1,\ldots,n_2,
\end{equation*}
where $\overline{\mathbf{{a}}}_{ij}$ are the tube fibers of $\mathcal{A}$.
\end{definition}

%\begin{definition}{\rm\cite{KB}}
%The k-mode product of a tensor $\ac \in \CC^{n_1\times n_2\times\cdots\times n_p}$ with a matrix $\mathbf{U}\in\CC^{J\times n_k}$ is denoted by $A\times_k\mathbf{U}$. More precise, we have
%\begin{equation*}
%  (\mathcal{A}\times_k\mathbf{U})_{i_1i_2\cdots i_{k-1}ji_{k+1}\cdots i_p}=
%  \sum\limits_{i_k=1}^{n_k}\mathcal{{A}}_{i_1i_2...i_p}\mathbf{U}_{ji_k}.
%\end{equation*}
%\end{definition}

\begin{definition}{\rm\cite{KB}}
The mode-3 product of a tensor $\ac \in \CC^{n_1\times n_2\times n_3}$ with a matrix $\mathbf{U}\in\CC^{J\times n_3}$ is denoted by $\mathcal{A}\times_3\mathbf{U}$. More precise, we have
\begin{equation*}
  (\mathcal{A}\times_3\mathbf{U})_{i_1i_2j}=
  \sum\limits_{i_3=1}^{n_3}\mathcal{{A}}_{i_1i_2i_3}\mathbf{U}_{ji_3}, \ i_1=1,\ldots,n_1, \ i_2=1,\ldots,n_2, \ j=1,\ldots,J.
\end{equation*}
\end{definition}
Let the frontal slice of $\mathcal{A}\in \CC^{n_1\times n_2\times n_3}$ are
\begin{equation*}
  \mathcal{A}^{(1)}=\left[
                      \begin{array}{cccc}
                        \mathcal{A}_{111} & \mathcal{A}_{121} & \cdots & \mathcal{A}_{1n_21} \\
                        \mathcal{A}_{211} & \mathcal{A}_{221} & \cdots & \mathcal{A}_{2n_21} \\
                        \vdots & \vdots &  & \vdots \\
                        \mathcal{A}_{n_111} & \mathcal{A}_{n_121} & \cdots & \mathcal{A}_{n_1n_21} \\
                      \end{array}
                    \right], \ldots, \mathcal{A}^{(n_3)}=\left[
                      \begin{array}{cccc}
                        \mathcal{A}_{11n_3} & \mathcal{A}_{12n_3} & \cdots & \mathcal{A}_{1n_2n_3} \\
                        \mathcal{A}_{21n_3} & \mathcal{A}_{22n_3} & \cdots & \mathcal{A}_{2n_2n_3} \\
                        \vdots & \vdots &  & \vdots \\
                        \mathcal{A}_{n_11n_3} & \mathcal{A}_{n_12n_3} & \cdots & \mathcal{A}_{n_1n_2n_3} \\
                      \end{array}
                    \right].
\end{equation*}
Then, the mode-3 unfolding of $\mathcal{A}$, denoted $\mathcal{A}_{(3)}$, is
\begin{equation}\label{p1}
  \mathcal{A}_{(3)}=
  \left[
  \begin{array}{ccccccccccccc}
   \mathcal{A}_{111} & \mathcal{A}_{211} & \cdots & \mathcal{A}_{n_111}&
   \mathcal{A}_{121} & \mathcal{A}_{221} & \cdots & \mathcal{A}_{n_121} &\cdots&
   \mathcal{A}_{1n_21}&\mathcal{A}_{2n_21}&\cdots &\mathcal{A}_{n_1n_21}\\
   \mathcal{A}_{112} & \mathcal{A}_{212} & \cdots & \mathcal{A}_{n_112}&
   \mathcal{A}_{122} & \mathcal{A}_{222} & \cdots &\mathcal{A}_{n_122}&\cdots&
   \mathcal{A}_{1n_22}&\mathcal{A}_{2n_22}&\cdots &\mathcal{A}_{n_1n_22}\\
   \vdots & \vdots &  & \vdots &\vdots & \vdots &  & \vdots& &\vdots& \vdots&  &\vdots\\
   \mathcal{A}_{11n_3} & \mathcal{A}_{21n_3} & \cdots & \mathcal{A}_{n_11n_3}&
   \mathcal{A}_{12n_3} & \mathcal{A}_{22n_3} & \cdots & \mathcal{A}_{n_12n_3}&\cdots & \mathcal{A}_{1n_2n_3}&\mathcal{A}_{2n_2n_3}&\cdots & \mathcal{A}_{n_1n_2n_3}\\
                      \end{array}
                    \right].
\end{equation}
Notice that $\mathcal{A}\times_3\mathbf{U}$ can be computed using the following matrix-matrix product. See \cite{KB} for details.
\begin{equation}\label{p2}
  \mathcal{Y}=\mathcal{A}\times_3\mathbf{U}\Leftrightarrow\mathcal{Y}_{(3)}
  =\mathbf{U}\mathcal{A}_{(3)}.
\end{equation}
Observe that
\begin{equation}\label{p3}
  L(\mathcal{A})=\mathcal{A}\times_3\mathbf{M}
\end{equation}
and
\begin{equation}\label{p4}
  L^{-1}(\mathcal{A})=\mathcal{A}\times_3\mathbf{M}^{-1}.
\end{equation}

\begin{lemma}{\rm\cite{KKA}}\label{ll2}
Let $\ac \in \CC^{n_1\times n_2\times n_3}$. Then,
\begin{equation*}
(\mathbf{C}_{n_3}\otimes\mathbf{I}_{n_1})\matt(\mathcal{A})(\mathbf{C}^{-1}_{n_3}\otimes\mathbf{I}_{n_2})  =\left[
   \begin{array}{cccc}
     L(\mathcal{A})^{(1)} &  &  &  \\
      & L(\mathcal{A})^{(2)} &  &  \\
      &  & \ddots &  \\
      &  &  & L(\mathcal{A})^{(n_3)} \\
   \end{array}
 \right],
\end{equation*}
where $\mathbf{C}_{n_3}$ is the $n_3\times n_3$ orthogonal DCT matrix.
\end{lemma}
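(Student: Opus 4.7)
The plan is to reduce the statement to the scalar-tube identity $\mathbf{C}_{n_3}\matt(\overline{\mathbf{y}})\mathbf{C}_{n_3}^T = \diag(\mathbf{M}\vecc(\overline{\mathbf{y}}))$ established just before Definition~\ref{d005}, and then to propagate it to the block level by exploiting the Kronecker structure of the conjugating matrices.

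First I would make the structural observation that when $\matt(\mathcal{A})$ is viewed as an $n_3\times n_3$ block matrix with $n_1\times n_2$ blocks, the $(i,j)$ scalar entry inside the $(p,q)$ block equals the $(p,q)$ entry of the scalar Toeplitz-plus-Hankel matrix $\matt(\overline{\mathbf{a}}_{ij})$ attached to the tube $\overline{\mathbf{a}}_{ij}$. This is immediate by comparing the Toeplitz-plus-Hankel template in (\ref{2}) applied to $\mathcal{A}$ with the same template applied to the scalar third-order tensor $\overline{\mathbf{a}}_{ij}\in\CC^{1\times 1\times n_3}$: both pick out exactly one Toeplitz contribution and one Hankel contribution from the scalars $\mathcal{A}_{ij1},\ldots,\mathcal{A}_{ijn_3}$ in the same positions.

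Next, since $\mathbf{C}_{n_3}\otimes\mathbf{I}_{n_1}$ mixes block rows without mixing the rows inside a single block, and $\mathbf{C}_{n_3}^{-1}\otimes\mathbf{I}_{n_2}$ analogously mixes block columns, the Kronecker calculation $(\mathbf{C}_{n_3}\otimes\mathbf{I}_{n_1})M(\mathbf{C}_{n_3}^{-1}\otimes\mathbf{I}_{n_2})\bigr|_{(p,q)\text{-block}} = \sum_{r,s}(\mathbf{C}_{n_3})_{pr}\,M_{rs}\,(\mathbf{C}_{n_3}^{-1})_{sq}$ combined with the structural observation gives
$$\bigl[\bigl((\mathbf{C}_{n_3}\otimes\mathbf{I}_{n_1})\matt(\mathcal{A})(\mathbf{C}_{n_3}^{-1}\otimes\mathbf{I}_{n_2})\bigr)_{pq}\bigr]_{ij} = \bigl[\mathbf{C}_{n_3}\matt(\overline{\mathbf{a}}_{ij})\mathbf{C}_{n_3}^{-1}\bigr]_{pq}.$$
Since $\mathbf{C}_{n_3}$ is orthogonal, $\mathbf{C}_{n_3}^{-1}=\mathbf{C}_{n_3}^T$, and the scalar-tube identity makes the right-hand side vanish when $p\ne q$ and equal the $p$-th entry of $\mathbf{M}\vecc(\overline{\mathbf{a}}_{ij})=\vecc(L(\overline{\mathbf{a}}_{ij}))$ when $p=q$.

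Finally, the $p$-th component of $\vecc(L(\overline{\mathbf{a}}_{ij}))$ is by definition the $(i,j)$ entry of the $p$-th frontal slice $L(\mathcal{A})^{(p)}$, since $L$ acts tube-wise. Assembling across all $(i,j)$ and $(p,q)$ shows the off-diagonal blocks vanish and the $p$-th diagonal block is precisely $L(\mathcal{A})^{(p)}$, yielding the claimed block diagonalization. The main obstacle is the first step, the cleanly stated identification of the $(i,j)$-entry inside each block of $\matt(\mathcal{A})$ with the corresponding entry of $\matt(\overline{\mathbf{a}}_{ij})$; once this bookkeeping is in place, the remainder is a routine block-scalar commutation driven by the Kronecker identities and the orthogonality of $\mathbf{C}_{n_3}$.
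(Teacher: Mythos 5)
Your proof is correct. Note that for this statement the paper itself offers no argument at all: Lemma~\ref{ll2} is simply quoted from \cite{KKA}, so there is no internal proof to compare against. Your derivation is a legitimate self-contained justification built entirely from facts the paper does state in Section~\ref{two}: the entrywise identification $\bigl[\matt(\mathcal{A})_{rs}\bigr]_{ij}=\bigl[\matt(\overline{\mathbf{a}}_{ij})\bigr]_{rs}$ (valid because each block of the template (\ref{2}) is a whole frontal slice or $O$, with the slice index depending only on the block position), the Kronecker block computation with $\mathbf{C}_{n_3}\otimes\mathbf{I}_{n_1}$ and $\mathbf{C}_{n_3}^{-1}\otimes\mathbf{I}_{n_2}$, the tube-level diagonalization $\mathbf{C}_{n_3}\matt(\overline{\mathbf{y}})\mathbf{C}_{n_3}^{T}=\diag(\mathbf{M}\vecc(\overline{\mathbf{y}}))$ stated before Definition~\ref{d005}, and the tube-wise definition of $L$, which together give vanishing off-diagonal blocks and $p$-th diagonal block $L(\mathcal{A})^{(p)}$. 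This is essentially the standard route by which the result is obtained in the cited source, so nothing further is needed; if anything, your write-up makes explicit the bookkeeping step (the reduction of the block Toeplitz-plus-Hankel structure to the $n_1 n_2$ scalar Toeplitz-plus-Hankel matrices $\matt(\overline{\mathbf{a}}_{ij})$) that the present paper leaves implicit.
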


\begin{lemma}{\rm\cite{KKA}}
Let $\ac \in \CC^{n_1\times n_2\times n_3}$ and
$\bc \in \CC^{n_2 \times l \times n_3}$. Then,
\begin{itemize}
  \item [] $(1)$ $\matt(\mathcal{A}*_c\mathcal{B})=\matt(\mathcal{A})\matt(\mathcal{B})$.
  \item [] $(2)$ $\mathcal{A}*_c\mathcal{B}=L^{-1}(L(\mathcal{A})\triangle L(\mathcal{B}))$.
\end{itemize}
\end{lemma}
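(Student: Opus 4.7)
The plan is to prove part (1) by exhibiting the matrix $\matt(\ac)\matt(\bc)$ as $\matt(\cc)$ for a concrete tensor $\cc$, so that the operator $\ten$ can be legitimately applied in the definition of the C-product. Part (2) will then fall out of comparing the block diagonalizations supplied by Lemma \ref{ll2}.

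For (1), I would first apply Lemma \ref{ll2} to both factors: conjugating $\matt(\ac)$ by $\mathbf{C}_{n_3}\otimes\mathbf{I}_{n_1}$ on the left and $(\mathbf{C}_{n_3}\otimes\mathbf{I}_{n_2})^{-1}$ on the right produces a block-diagonal matrix with blocks $L(\ac)^{(i)}$, and the analogous statement holds for $\matt(\bc)$ with blocks $L(\bc)^{(i)}$. Multiplying the two conjugated forms, the inner factor $(\mathbf{C}_{n_3}\otimes\mathbf{I}_{n_2})^{-1}(\mathbf{C}_{n_3}\otimes\mathbf{I}_{n_2})$ cancels, so the product $\matt(\ac)\matt(\bc)$ is itself conjugate (via $\mathbf{C}_{n_3}\otimes\mathbf{I}_{n_1}$ and $(\mathbf{C}_{n_3}\otimes\mathbf{I}_{l})^{-1}$) to the block-diagonal matrix whose $i$-th block equals $L(\ac)^{(i)}L(\bc)^{(i)}$. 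Since the transform $L$ is invertible, I can define $\cc\in\CC^{n_1\times l\times n_3}$ to be the unique tensor with $L(\cc)^{(i)}=L(\ac)^{(i)}L(\bc)^{(i)}$ for every $i$. Applying Lemma \ref{ll2} once more to $\cc$ shows that $\matt(\cc)$ admits exactly the same block diagonalization, whence $\matt(\cc)=\matt(\ac)\matt(\bc)$. This places $\matt(\ac)\matt(\bc)$ in the range of $\matt$, so that $\ten(\matt(\ac)\matt(\bc))=\cc$ is unambiguous, and applying $\matt$ to $\ac*_c\bc=\ten(\matt(\ac)\matt(\bc))$ yields assertion (1).

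For (2), combining (1) with Lemma \ref{ll2} and reading off the diagonal blocks on both sides gives $L(\ac*_c\bc)^{(i)}=L(\ac)^{(i)}L(\bc)^{(i)}$ for each $i$, which is exactly the face-wise product identity $L(\ac*_c\bc)=L(\ac)\triangle L(\bc)$. Applying $L^{-1}$ then delivers (2).

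The main obstacle is the implicit surjectivity step in (1), namely guaranteeing that $\matt(\ac)\matt(\bc)$ genuinely sits in the image of $\matt$ rather than merely in the larger set of matrices that are block-diagonalized by $\mathbf{C}_{n_3}\otimes\mathbf{I}$. This is handled cleanly by invoking the invertibility of $L$ to construct a preimage $\cc$ with the prescribed diagonal blocks and then using Lemma \ref{ll2} in the reverse direction; a dimension count ($n_1 l n_3$ parameters on each side) also confirms that no pathology can occur. Once this point is secured, both statements reduce to routine bookkeeping in the block-diagonal picture.
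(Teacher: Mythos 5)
Your proof is correct. Note that the paper itself offers no proof of this lemma---it is quoted verbatim from \cite{KKA}---so there is nothing internal to compare against; your block-diagonalization argument via Lemma \ref{ll2} is essentially the standard route in that reference. In particular, you correctly identified and closed the only real subtlety: since $\ten$ is only defined on the range of $\matt$, one must exhibit $\matt(\ac)\matt(\bc)$ as $\matt(\cc)$ for the tensor $\cc$ with $L(\cc)^{(i)}=L(\ac)^{(i)}L(\bc)^{(i)}$ (using invertibility of $L$), after which $(1)$ and $(2)$ follow by reading off the diagonal blocks.
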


The C-product of $\ac \in \CC^{n_1\times n_2\times n_3}$ and
$\bc \in \CC^{n_2 \times l \times n_3}$ can be computed using the following Algorithm borrowed from {\rm\cite{KKA}}.

\begin{algorithm}[H]
\caption{\textsc{Compute the C-product of two tensors }}\label{algo:sylvester9}
\KwIn{$n_1\times n_2\times n_3$ tensor $\mathcal{A}$ and $n_2\times l\times n_3$ tensor $\mathcal{B}$}
\KwOut{$n_1\times l\times n_3$ tensor $\mathcal{C}$}
\begin{enumerate}
\addtolength{\itemsep}{-0.8\parsep minus 0.8\parsep}

\item $\widehat{\mathcal{A}} = L(\mathcal{A})$, $\widehat{\mathcal{B}} = L(\mathcal{B})$

\item

for $i=1,\ldots, n_3$

\quad $\widehat{\mathcal{C}}^{(i)}=\widehat{\mathcal{A}}^{(i)}\widehat{\mathcal{B}}^{(i)}$

end

\item  ${\mathcal{C}} = L^{-1}(\widehat{\mathcal{C}})$
\end{enumerate}
\end{algorithm}

\begin{lemma}{\rm\cite{KKA}}
If $\ac, \bc, \cc$ are order-3 tensors of proper size, then the following statements are true:
\begin{enumerate}[{\rm (1)}]
\item  $\ac*_c(\bc+\cc) = \ac*_c\bc + \ac*_c\cc$;
\item  $(\ac+\bc)*_c\cc = \ac*_c\cc + \bc*_c\cc$;
\item  $(\ac*_c\bc)*_c\cc = \ac*_c(\bc*_c\cc)$.
\end{enumerate}
\end{lemma}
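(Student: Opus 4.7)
The plan is to transfer the three identities from tensors to ordinary matrices via the embedding $\matt(\cdot)$ and exploit the corresponding standard matrix identities. The two pieces of machinery I will use have already been set up in the excerpt: first, the previous lemma gives the multiplicative identity
\[
\matt(\ac *_c \bc) = \matt(\ac)\,\matt(\bc);
\]
second, $\ten$ is defined as the inverse of $\matt$, so two tensors agree if and only if their $\matt$-images agree. The only new observation I need is that $\matt$ is additive, i.e.
\[
\matt(\ac + \bc) = \matt(\ac) + \matt(\bc).
\]
This is immediate from the definition (\ref{2}): every block entry of $\matt(\cdot)$ is one of the frontal slices $\mathcal{A}^{(i)}$ or a zero block, and $(\ac+\bc)^{(i)} = \ac^{(i)}+\bc^{(i)}$, so the block Toeplitz-plus-Hankel assembly is linear in the frontal slices.

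Once additivity of $\matt$ is in hand, each of (1), (2), (3) reduces to a one-line calculation in the matrix world followed by an application of $\ten$. For (1):
\[
\matt\bigl(\ac *_c (\bc+\cc)\bigr) = \matt(\ac)\,\matt(\bc+\cc) = \matt(\ac)\matt(\bc) + \matt(\ac)\matt(\cc) = \matt(\ac *_c \bc) + \matt(\ac *_c \cc),
\]
and applying $\ten$ yields the desired identity. Item (2) is symmetric and proceeds the same way with the distributive law on the right. For (3), associativity of the C-product follows from associativity of ordinary matrix multiplication:
\[
\matt\bigl((\ac *_c \bc) *_c \cc\bigr) = \matt(\ac *_c \bc)\,\matt(\cc) = \matt(\ac)\,\matt(\bc)\,\matt(\cc) = \matt(\ac)\,\matt(\bc *_c \cc) = \matt\bigl(\ac *_c (\bc *_c \cc)\bigr),
\]
and again one applies $\ten$ to both sides.

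There is essentially no hard step here: the real content is the encoding lemma $\matt(\ac *_c \bc) = \matt(\ac)\matt(\bc)$, which is already assumed. If any subtlety arises, it will be only in justifying the additivity of $\matt$, and even that is just a bookkeeping check on the block pattern in (\ref{2}). An alternative, essentially equivalent, route would be to work in the transform domain using $L(\ac *_c \bc) = L(\ac)\triangle L(\bc)$ together with the linearity of the invertible transform $L$; the face-wise product $\triangle$ is defined slice-by-slice by ordinary matrix multiplication, so distributivity and associativity of $\triangle$ are inherited from matrices, and pulling back by $L^{-1}$ (also linear) gives the three identities. I would state the proof via $\matt$ since that is the most direct application of the preceding lemma.
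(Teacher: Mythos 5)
The paper states this lemma without proof, simply citing \cite{KKA}, so there is no in-paper argument to compare against; your proof is correct and is the natural one, resting on the additivity of $\matt(\cdot)$, the identity $\matt(\ac*_c\bc)=\matt(\ac)\matt(\bc)$, and the fact that $\ten$ inverts $\matt$ (equivalently, the injectivity of $\matt$). The only point worth making explicit is that before applying $\ten$ in (1) and (2) you should rewrite the right-hand side $\matt(\ac*_c\bc)+\matt(\ac*_c\cc)$ as $\matt(\ac*_c\bc+\ac*_c\cc)$ by one more use of additivity, so that both sides are visibly $\matt$-images of tensors; with that trivial addition the argument is complete, and the alternative route you sketch through $L$, the face-wise product and $L^{-1}$ is an equally valid, essentially equivalent bookkeeping.
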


\begin{definition}{\rm\cite{KKA}} %\label{d5}
Let $L(\mathcal{I})=\widehat{\mathcal{I}}\in\CC^{n\times n\times n_3}$ be such that $\widehat{\mathcal{I}}^{(i)}=\mathbf{I}_n$, $i=1,2,...,n_3$. Then $\mathcal{I}= L^{-1}(\widehat{\mathcal{I}})$ is the identity tensor.
\end{definition}

\begin{lemma}{\rm\cite{KKA}}
Let $\ac \in \CC^{n_1\times n_1\times n_3}$ and
$\mathcal{I} \in \CC^{n_1 \times n_1 \times n_3}$ is the identity tensor. Then,
\begin{equation*}
  \mathcal{I}*_c\ac=\ac*_c\mathcal{I}=\mathcal{A}.
\end{equation*}
\end{lemma}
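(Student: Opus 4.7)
The plan is to reduce everything to the face-wise statement via the transform $L$, using part (2) of the preceding lemma, namely $\ac*_c\bc = L^{-1}(L(\ac)\triangle L(\bc))$. Since $L$ is an invertible linear transform (Definition \ref{d005}) and $L^{-1}$ is well defined, it suffices to check the identity after applying $L$.

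First, I would compute $\ic*_c\ac$. By the lemma, $\ic*_c\ac = L^{-1}\bigl(L(\ic)\triangle L(\ac)\bigr)$. By the definition of the identity tensor, the frontal slices of $L(\ic) = \widehat{\ic}$ satisfy $\widehat{\ic}^{(i)} = \mathbf{I}_{n_1}$ for every $i = 1,\ldots,n_3$. Hence, by the definition of the face-wise product,
\begin{equation*}
  \bigl(L(\ic)\triangle L(\ac)\bigr)^{(i)} = \widehat{\ic}^{(i)} L(\ac)^{(i)} = \mathbf{I}_{n_1}\, L(\ac)^{(i)} = L(\ac)^{(i)},
\end{equation*}
for all $i$, so $L(\ic)\triangle L(\ac) = L(\ac)$. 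Applying $L^{-1}$ yields $\ic*_c\ac = L^{-1}(L(\ac)) = \ac$.

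For the opposite order $\ac*_c\ic$ (where, to match dimensions in the face-wise product, the identity is taken of the appropriate size $n_2\times n_2\times n_3$, the $n_1\times n_1\times n_3$ statement in the lemma being understood correspondingly), the same argument gives $\bigl(L(\ac)\triangle L(\ic)\bigr)^{(i)} = L(\ac)^{(i)}\mathbf{I} = L(\ac)^{(i)}$, and therefore $\ac*_c\ic = L^{-1}(L(\ac)) = \ac$.

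There is no real obstacle here: the argument is a direct unpacking of the definitions, and the only point one has to be careful with is confirming that $L$ is injective (so that $L(\ic)\triangle L(\ac) = L(\ac)$ indeed implies $\ic*_c\ac = \ac$), which is guaranteed because $L$ is declared an invertible linear transform in Definition \ref{d005}. Alternatively, one could give an equivalent proof using part (1) of the preceding lemma, namely $\matt(\ic*_c\ac) = \matt(\ic)\matt(\ac)$, together with Lemma \ref{ll2} to block-diagonalize $\matt(\ic)$ and observe that each block is the identity matrix, but the route via face-wise product is shorter.
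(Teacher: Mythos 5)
Your proof is correct and follows essentially the same route as the paper's: apply $L$, use $\ac*_c\bc=L^{-1}(L(\ac)\triangle L(\bc))$ together with the definition of the identity tensor to see that each frontal slice of $L(\ic)$ acts as the identity matrix, and conclude by invertibility of $L$. Your explicit remark that the right-hand identity must be of size $n_2\times n_2\times n_3$ is a sensible clarification of the statement, but it does not change the argument, which matches the paper's (more tersely written) proof.
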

\begin{proof}
It is clear that $$L(\mathcal{I}*_c\ac)=L(\mathcal{I})\triangle L(\ac)=L(\ac)=L(\ac)\triangle L(\mathcal{I})=L(\ac*_c\mathcal{I}).$$ Thus, $\mathcal{I}*_c\ac=\ac*_c\mathcal{I}=\mathcal{A}$.
\end{proof}

\begin{definition}\label{d9}
Let $\ac \in \CC^{n_1\times n_1\times n_3}$ and $\bc \in \CC^{n_1\times n_1\times n_3}$. If
\begin{equation*}\label{0}
  \mathcal{A}*_c\mathcal{B}=\mathcal{I} \ \  \text{and}    \  \ \mathcal{B}*_c\mathcal{A}=\mathcal{I},
\end{equation*}
then $\mathcal{A}$ is said to be {\bf{invertible}} and  $\mathcal{B}$ is the {\bf{inverse}} of $\mathcal{A}$, which is denoted by $\mathcal{A}^{-1}$.
\end{definition}

It is easy to see the inverse of a tensor, if exists, is unique. The conjugate transpose of tensors can be defined as follows.

\begin{definition}{\rm \cite{KKA}} %\label{d92}
If $\ac \in \CC^{n_1 \times n_2 \times n_3}$, then the {\bf{conjugate transpose}} of $\ac$, which is
denoted by $\ac^H$, is such that $$L(\ac^H)^{(i)}=(L(\ac)^{(i)})^H, \ \ i=1,2,...,n_3.$$
\end{definition}

\begin{lemma}{\rm \cite{KKA}} %\label{ll}
Let $\ac \in \CC^{n_1\times n_2\times n_3}$ and $\bc \in \CC^{n_2\times l\times n_3}$. It holds that
\begin{equation*} %\label{33}
(\ac*_c\bc)^H = \bc^H*_c\ac^H.
\end{equation*}
\end{lemma}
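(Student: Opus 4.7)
The plan is to reduce the identity to a statement about face-wise products under the transform $L$, where it becomes the familiar matrix fact $(XY)^H = Y^H X^H$.

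First I would apply $L$ to both sides, since $L$ is invertible and the identity tensor on the frontal slices of $L(\cdot)$ uniquely determines a tensor. By property (2) of the preceding lemma, $L(\ac*_c\bc) = L(\ac)\triangle L(\bc)$, which on the $i$-th frontal slice reads $L(\ac*_c\bc)^{(i)} = L(\ac)^{(i)}\,L(\bc)^{(i)}$.

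Next, I would take the matrix conjugate transpose of this $i$-th slice and use the standard identity $(L(\ac)^{(i)}\,L(\bc)^{(i)})^H = (L(\bc)^{(i)})^H\,(L(\ac)^{(i)})^H$. Invoking the definition of the conjugate transpose of a tensor, which sets $L(\ac^H)^{(i)} = (L(\ac)^{(i)})^H$ and likewise for $\bc$, the right-hand side becomes $L(\bc^H)^{(i)}\,L(\ac^H)^{(i)}$. Re-bundling via the face-wise product gives $L(\bc^H)\triangle L(\ac^H)$, and a second application of property (2) yields $L(\bc^H *_c \ac^H)$. On the left-hand side, the same definition of tensor conjugate transpose gives $(L(\ac*_c\bc)^{(i)})^H = L((\ac*_c\bc)^H)^{(i)}$.

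Combining the two chains shows $L((\ac*_c\bc)^H)^{(i)} = L(\bc^H *_c \ac^H)^{(i)}$ for every $i = 1,\dots,n_3$, so $L((\ac*_c\bc)^H) = L(\bc^H *_c \ac^H)$, and invertibility of $L$ yields the claim. There is no real obstacle here; the only thing to be careful about is keeping the two uses of the transpose straight, namely the face-wise matrix $(\cdot)^H$ on slices of $L(\ac)$ versus the tensor $(\cdot)^H$, which are linked precisely by the definition of the tensor conjugate transpose. If one prefers, the argument can also be executed via $\matt(\cdot)$ using property (1), together with Lemma \ref{ll2} to diagonalize the block Toeplitz-plus-Hankel structure, but routing through $L$ and the face-wise product is the shortest path.
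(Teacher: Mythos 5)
Your argument is correct: passing through $L$, using $L(\ac*_c\bc)=L(\ac)\triangle L(\bc)$ slice-wise, applying the matrix identity $(XY)^H=Y^HX^H$, and translating back via the definition $L(\ac^H)^{(i)}=(L(\ac)^{(i)})^H$ and invertibility of $L$ is exactly the intended reasoning. The paper itself states this lemma as a cited result from \cite{KKA} without proof, and your route matches the style of the paper's own short verifications (e.g.\ the proof that $\mathcal{I}*_c\ac=\ac*_c\mathcal{I}=\ac$), so there is nothing to add.
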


\begin{definition}\label{d3}
Let $\ac \in \CC^{n_1\times n_1\times n_3}$. $\ac$ is said {\bf{symmetric}}
if $\ac^H = \ac$.
\end{definition}

\begin{definition} {\rm \cite{KKA}}
Let $\qc \in \CC^{n_1\times n_1\times n_3}$. $\qc$ is said {\bf{unitary}} if
$\qc^H*_c\qc = \qc*_c\qc^H=\ic$.
\end{definition}

%\begin{definition} %\cite{MSL}\label{dh10}
%Let $\ac\in \CC^{n_1\times n_2\times n_3}$. Then, the
%{\bf{Frobenius norm}} of $\ac$ is
%\begin{equation*} % \label{se}
%\|\ac\|^2_F = \ac^H*_c\ac = \sum_{i_1=1}^{n_1}\sum_{i_2=1}^{n_2}\cdots\sum_{i_p=1}^{n_p}\mathcal{A}^2_{i_1...i_p}.
%\end{equation*}
%\end{definition}

%Indeed, $\| \cdot \|_F$ is a norm, which allows us to speak about convergence of a sequence of tensors.
%Also, let us recall that all norms on a finite-dimensional vector space are equivalent between them.

\begin{definition}
Let $\ac \in \CC^{n_1\times n_2\times n_3}$. Then, $\ac$ is called
an {\bf{F-diagonal}}$/${\bf{F-upper}}$/${\bf{F-lower}}  tensor if all frontal slices $\mathcal{A}^{(i)}$, $i=1,2,...,n_3$ of $\mathcal{A}$ are diagonal$/$upper  triangular$/$lower  triangular matrices.
\end{definition}

%\begin{definition}
%Let $\ac \in \CC^{n_1\times n_2\times n_3}$. Then, $\ac$ is called
%an {\bf{F-upper }} tensor if all frontal slices $\mathcal{A}^{(i)}$, $i=1,2,...,n_3$ of $\mathcal{A}$ are upper  triangular matrices.
%\end{definition}

%\begin{definition}
%Let $\ac \in \CC^{n_1\times n_2\times n_3}$. Then, $\ac$ is called
%an {\bf{F-lower }} tensor if all frontal slices $\mathcal{A}^{(i)}$, $i=1,2,...,n_3$ of $\mathcal{A}$ are lower  triangular matrices.
%\end{definition}

\begin{lemma}%\label{ll}
Let $\ac \in \CC^{n_1\times n_2\times n_3}$. Then, $L(\mathcal{A})$ is an F-diagonal$/$F-upper$/$F-lower tensor if and only if $\ac$ is an F-diagonal$/$F-upper$/$F-lower tensor.
\end{lemma}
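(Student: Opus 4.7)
The plan is to exploit the fact, recorded in equation (p3), that the transform $L$ acts on a tensor via mode-$3$ multiplication: $L(\mathcal{A}) = \mathcal{A}\times_3\mathbf{M}$, and its inverse is $L^{-1}(\mathcal{A}) = \mathcal{A}\times_3\mathbf{M}^{-1}$ by (p4). The F-diagonal, F-upper and F-lower conditions are all of a very specific shape: they say that certain tube fibers $\overline{\mathbf{a}}_{ij}=\mathcal{A}(i,j,:)$ are identically zero (the off-diagonal tubes in the diagonal case, the strictly lower tubes in the upper case, and the strictly upper tubes in the lower case). So the lemma essentially amounts to the statement that mode-$3$ multiplication by an invertible matrix sends a zero tube to a zero tube.

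First I would observe, directly from the definition of the mode-$3$ product, that for each fixed index pair $(i,j)$ the tube $\bigl(\mathcal{A}\times_3\mathbf{M}\bigr)(i,j,:)$ is obtained by applying $\mathbf{M}$ to the single tube $\mathcal{A}(i,j,:)$; equivalently, by (p1) and (p2), $L(\mathcal{A})_{(3)} = \mathbf{M}\,\mathcal{A}_{(3)}$, and the column of $\mathcal{A}_{(3)}$ indexed by $(i,j)$ is precisely $\vecc(\overline{\mathbf{a}}_{ij})$. Consequently, $\overline{\mathbf{a}}_{ij}=\mathbf{0}$ implies that the corresponding tube of $L(\mathcal{A})$ vanishes as well.

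Next I would carry out the forward direction in each of the three cases: if $\mathcal{A}$ is F-diagonal, then $\overline{\mathbf{a}}_{ij}=\mathbf{0}$ for every $i\neq j$, hence the $(i,j)$-tube of $L(\mathcal{A})$ is zero for every such pair, which is exactly what it means for $L(\mathcal{A})$ to be F-diagonal. The F-upper and F-lower cases are identical, replacing the condition $i\neq j$ by $i>j$ or $i<j$, respectively. For the converse I would use (p4): writing $\mathcal{A}= L^{-1}(L(\mathcal{A})) = L(\mathcal{A})\times_3\mathbf{M}^{-1}$ and applying the very same tube-wise argument to the matrix $\mathbf{M}^{-1}$ (which is well-defined since $\mathbf{M}=\mathbf{W}^{-1}\mathbf{C}_{n_3}(\mathbf{I}+\mathbf{Z})$ is invertible by construction) transfers the structural zero tubes of $L(\mathcal{A})$ back to $\mathcal{A}$.

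There is no real obstacle here; the proof is essentially a one-line observation that tube-wise linear maps preserve patterns of zero tubes. The only thing to be careful about is not to confuse F-diagonality (a statement about frontal slices, i.e.\ about tubes indexed by $(i,j)$) with any diagonality statement in the third mode, and to make sure the three cases are handled uniformly by phrasing everything in terms of vanishing tubes $\overline{\mathbf{a}}_{ij}$.
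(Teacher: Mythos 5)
Your proposal is correct and is essentially the paper's own argument: both rest on the identity $L(\mathcal{A})_{(3)}=\mathbf{M}\mathcal{A}_{(3)}$ (and $\mathcal{A}_{(3)}=\mathbf{M}^{-1}L(\mathcal{A})_{(3)}$ for the converse), observing that the columns of the mode-$3$ unfolding are exactly the tubes $\overline{\mathbf{a}}_{ij}$, so zero tubes are preserved in both directions. The only difference is presentational: the paper writes out the unfolded matrices explicitly for the F-upper case and delegates the other two, while you phrase the zero-pattern argument once, uniformly for all three structures.
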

\begin{proof}
We only prove the case of the F-lower tensor for the sake of the F-diagonal tensor is one special case of the F-lower tensor and the F-upper tensor can be proved similarly.

Let $\mathcal{B}=L(\mathcal{A})$. Then, by using (\ref{p2}) and (\ref{p4}), one has $\mathcal{A}=L^{-1}(\mathcal{B})=\mathcal{B}\times_3\mathbf{M}^{-1}$ and $\mathcal{A}_{(3)}=\mathbf{M}^{-1}\mathcal{B}_{(3)}$, where $\mathbf{M}$ is defined in (\ref{l201}). Since $\mathcal{B}$ is an F-lower tensor, by (\ref{p1}), one has
\begin{equation*}
  \mathcal{B}_{(3)}=
  \left[
  \begin{array}{cccccccccccccc}
   \mathcal{B}_{111}   & \mathcal{B}_{211}  & \cdots & \mathcal{B}_{n_111}& 0 & \mathcal{B}_{221} & \cdots & \mathcal{B}_{n_121}& 0 & 0 & \mathcal{B}_{331} & \cdots & \mathcal{B}_{n_131}&\cdots \\
   \mathcal{B}_{112}   & \mathcal{B}_{212}  & \cdots & \mathcal{B}_{n_112}& 0 & \mathcal{B}_{222} & \cdots & \mathcal{B}_{n_122}& 0 & 0 & \mathcal{B}_{332} & \cdots & \mathcal{B}_{n_132}&\cdots\\
   \vdots & \vdots &   & \vdots &\vdots & \vdots &  & \vdots& \vdots& \vdots&\vdots& &\vdots\\
   \mathcal{B}_{11n_3} & \mathcal{B}_{21n_3}& \cdots& \mathcal{B}_{n_11n_3}& 0&\mathcal{B}_{22n_3}& \cdots & \mathcal{B}_{n_12n_3}&0 & 0 & \mathcal{B}_{33n_3}&\cdots & \mathcal{B}_{n_13n_3}&\cdots\\
                      \end{array}
                    \right].
\end{equation*}
By using the matrices product, it is easy to see
\begin{equation*}
  \mathcal{A}_{(3)}=
  \left[
  \begin{array}{cccccccccccccc}
   \mathcal{A}_{111}   & \mathcal{A}_{211}  & \cdots & \mathcal{A}_{n_111}& 0 & \mathcal{A}_{221} & \cdots & \mathcal{A}_{n_121}& 0 & 0 & \mathcal{A}_{331} & \cdots & \mathcal{A}_{n_131}&\cdots \\
   \mathcal{A}_{112}   & \mathcal{A}_{212}  & \cdots & \mathcal{A}_{n_112}& 0 & \mathcal{A}_{222} & \cdots & \mathcal{A}_{n_122}& 0 & 0 & \mathcal{A}_{332} & \cdots & \mathcal{A}_{n_132}&\cdots\\
   \vdots & \vdots &   & \vdots &\vdots & \vdots &  & \vdots& \vdots& \vdots&\vdots& &\vdots\\
   \mathcal{A}_{11n_3} & \mathcal{A}_{21n_3}& \cdots& \mathcal{A}_{n_11n_3}& 0&\mathcal{A}_{22n_3}& \cdots & \mathcal{A}_{n_12n_3}&0 & 0 & \mathcal{A}_{33n_3}&\cdots & \mathcal{A}_{n_13n_3}&\cdots\\
                      \end{array}
                    \right].
\end{equation*}
Then, we have all the frontal slices of $\mathcal{A}$ are lower  triangular matrices, which means $\ac$ is an F-lower tensor.

Conversely, if $\ac$ is an F-lower tensor, then $\mathcal{A}_{(3)}$ has the above form. Also, we have $$L(\mathcal{A})=\mathcal{A}\times_3\mathbf{M}\Leftrightarrow L(\mathcal{A})_{(3)}=\mathbf{M}\mathcal{A}_{(3)}$$
 by (\ref{p2}) and (\ref{p3}). Then,
\begin{equation*}
L(\mathcal{A})_{(3)}=\left[
\begin{array}{cccccccccccccc}
 \mathcal{L}_{111}   & \mathcal{L}_{211}  & \cdots & \mathcal{L}_{n_111}& 0 & \mathcal{L}_{221} & \cdots & \mathcal{L}_{n_121}& 0 & 0 & \mathcal{L}_{331} & \cdots & \mathcal{L}_{n_131}&\cdots \\
 \mathcal{L}_{112}   & \mathcal{L}_{212}  & \cdots & \mathcal{L}_{n_112}& 0 & \mathcal{L}_{222} & \cdots & \mathcal{L}_{n_122}& 0 & 0 & \mathcal{L}_{332} & \cdots & \mathcal{L}_{n_132}&\cdots\\
 \vdots & \vdots &   & \vdots &\vdots & \vdots &  & \vdots& \vdots& \vdots&\vdots& &\vdots\\
 \mathcal{L}_{11n_3} & \mathcal{L}_{21n_3}& \cdots& \mathcal{L}_{n_11n_3}& 0&\mathcal{L}_{22n_3}& \cdots & \mathcal{L}_{n_12n_3}&0 & 0 & \mathcal{L}_{33n_3}&\cdots & \mathcal{L}_{n_13n_3}&\cdots\\
\end{array}\right],
\end{equation*}
which implies $L(\mathcal{A})$ is an F-lower tensor.
\end{proof}

%\begin{eqnarray*}
%  &&L(\mathcal{A})_{(3)}=\\
%  &&\small{\left[
%  \begin{array}{cccccccccccccccc}
%   \mathcal{L}_{111} & \mathcal{L}_{121} & \cdots & \mathcal{L}_{1n_21}&
%    \mathcal{L}_{211} & \mathcal{L}_{221} & \cdots & \mathcal{L}_{2(n_2-1)1} &0&
%    \mathcal{L}_{311} & \mathcal{L}_{321} & \cdots & \mathcal{L}_{3(n_2-2)1} &0 &0
%    &\cdots\\
%   \mathcal{L}_{112} & \mathcal{L}_{122} & \cdots & \mathcal{L}_{1n_22}&
%   \mathcal{L}_{212} & \mathcal{L}_{222} & \cdots &\mathcal{L}_{2(n_2-1)2}&0&
%   \mathcal{L}_{312} & \mathcal{L}_{322} & \cdots &\mathcal{L}_{3(n_2-2)2}&0&0&
%   \cdots\\
%   \vdots & \vdots &  & \vdots &\vdots & \vdots &  & \vdots&\vdots&\vdots&\vdots&&\vdots&\vdots&\vdots&\\
%   \mathcal{L}_{11n_3} & \mathcal{L}_{12n_3} & \cdots & \mathcal{L}_{1n_2n_3}&
%   \mathcal{L}_{21n_3} & \mathcal{L}_{22n_3} & \cdots & \mathcal{L}_{2(n_2-1)n_3}  &0&
%   \mathcal{L}_{31n_3} & \mathcal{L}_{32n_3} & \cdots & \mathcal{L}_{3(n_2-2)n_3}  &0&0
%   &\cdots \\\end{array}
%                    \right]},
%\end{eqnarray*}

%\begin{definition}
%Let $\ac \in \CC^{n_1\times n_2\times n_3}$. Then, $\ac$ is called
%an {\bf{F-row full rank}} tensor if all frontal slices $\mathcal{A}^{(i)}$, $i=1,2,...,n_3$ of $\mathcal{A}$ are row full rank matrices.
%\end{definition}

%\begin{definition}
%Let $\ac \in \CC^{n_1\times n_2\times n_3}$. Then, $\ac$ is called
%an {\bf{F-column full rank}} tensor if all frontal slices $\mathcal{A}^{(i)}$, $i=1,2,...,n_3$ of $\mathcal{A}$ are column full rank matrices.
%\end{definition}

% % % % % % % % % % % % % % % % % % % % % % % %

\section{The Moore-Penrose inverse of tensors under the C-product}\label{three}

In this part, we will give some expressions of the Moore-Penrose inverse by using the C-SVD, C-QR decomposition, C-Schur decomposition, C-full rank decomposition, C-QDR decomposition and C-HS decomposition. Then, we establish an algorithm to compute the Moore-Penrose inverse based on the C-SVD of a tensor $\mathcal{A}$.

\subsection{The expressions of the Moore-Penrose inverse of tensors}

\begin{definition}
Let $\ac \in \CC^{n_1\times n_2\times n_3}$. If there exists a tensor $\xc \in
\CC^{n_2\times n_1\times n_3}$ such that
\begin{equation}\label{mp}
\ac*_c\xc*_c\ac = \ac, \qquad \xc*_c\ac*_c\xc = \xc, \qquad (\ac*_c\xc)^H = \ac*_c\xc, \qquad (\xc*_c\ac)^H = \xc*_c\ac,
\end{equation}
then $\xc$ is called the \textbf{Moore-Penrose inverse} of the tensor $\ac$ and is denoted by $\ac^\dag$.
\end{definition}

For $\mathcal{A}\in\CC^{n_1\times n_2\times n_3}$,  denote $\mathcal{A}{\{i, j,  \ldots,  k\}}$
the set of all $\mathcal{X}\in\CC^{n_2\times n_1\times n_3}$ which satisfy equations ($i$),  ($j$), $\ldots$ , ($k$) of $(\ref{mp})$.  In this case, $\mathcal{X}$ is a $\{i, j, \ldots, k\}$-inverse.

\begin{theorem}{\rm \cite{KKA}}
Let $\mathcal{A}\in\CC^{n_1\times n_2\times n_3}$. Then there exist
unitary tensors $\mathcal{U}\in\CC^{n_1\times n_1\times n_3}$ and $\mathcal{V}\in\CC^{n_2\times n_2\times n_3}$
such that
\begin{equation*}
  \mathcal{A}=\mathcal{U}*_c\mathcal{S}*_c\mathcal{V}^H,
\end{equation*}
where $\mathcal{S}$ is an $n_1\times n_2\times n_3$ {F-diagonal}  tensor. We call this decomposition the C-SVD of $\mathcal{A}$.
\end{theorem}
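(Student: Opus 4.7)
The plan is to reduce the tensor decomposition to a slice-wise matrix SVD in the ``transform domain'' determined by $L$, and then invert $L$ to recover the desired tensor factors.

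First, I would form $\widehat{\mathcal{A}} = L(\mathcal{A}) \in \CC^{n_1 \times n_2 \times n_3}$ and, for each $i = 1, 2, \ldots, n_3$, compute a standard matrix SVD of the $i$-th frontal slice,
\begin{equation*}
\widehat{\mathcal{A}}^{(i)} = \widehat{\mathcal{U}}^{(i)} \widehat{\mathcal{S}}^{(i)} (\widehat{\mathcal{V}}^{(i)})^H,
\end{equation*}
where $\widehat{\mathcal{U}}^{(i)} \in \CC^{n_1 \times n_1}$ and $\widehat{\mathcal{V}}^{(i)} \in \CC^{n_2 \times n_2}$ are unitary and $\widehat{\mathcal{S}}^{(i)} \in \CC^{n_1 \times n_2}$ is diagonal with nonnegative entries. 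Assembling these slices yields tensors $\widehat{\mathcal{U}}, \widehat{\mathcal{S}}, \widehat{\mathcal{V}}$ of sizes $n_1 \times n_1 \times n_3$, $n_1 \times n_2 \times n_3$ and $n_2 \times n_2 \times n_3$, respectively. I then define
\begin{equation*}
\mathcal{U} = L^{-1}(\widehat{\mathcal{U}}), \qquad \mathcal{S} = L^{-1}(\widehat{\mathcal{S}}), \qquad \mathcal{V} = L^{-1}(\widehat{\mathcal{V}}).
\end{equation*}

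Next, I would verify the three required properties. For the factorization itself, apply $L$ and use $\mathcal{A} *_c \mathcal{B} = L^{-1}(L(\mathcal{A}) \triangle L(\mathcal{B}))$ together with the definition of $\mathcal{A}^H$, which gives $L(\mathcal{V}^H)^{(i)} = (\widehat{\mathcal{V}}^{(i)})^H$. Then the $i$-th frontal slice of $L(\mathcal{U} *_c \mathcal{S} *_c \mathcal{V}^H)$ is exactly $\widehat{\mathcal{U}}^{(i)} \widehat{\mathcal{S}}^{(i)} (\widehat{\mathcal{V}}^{(i)})^H = \widehat{\mathcal{A}}^{(i)}$, so applying $L^{-1}$ recovers $\mathcal{A}$. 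For unitarity, $\mathcal{U}^H *_c \mathcal{U}$ transforms slice-wise to $(\widehat{\mathcal{U}}^{(i)})^H \widehat{\mathcal{U}}^{(i)} = \mathbf{I}_{n_1}$, and comparison with the definition of the identity tensor $\mathcal{I}$ (whose image under $L$ has identity frontal slices) shows $\mathcal{U}^H *_c \mathcal{U} = \mathcal{I}$; the reverse product and the analogous statement for $\mathcal{V}$ are identical. For F-diagonality of $\mathcal{S}$, the tensor $\widehat{\mathcal{S}}$ has diagonal frontal slices by construction, so the preceding lemma (that $L(\mathcal{A})$ is F-diagonal iff $\mathcal{A}$ is F-diagonal) immediately gives that $\mathcal{S} = L^{-1}(\widehat{\mathcal{S}})$ is F-diagonal.

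The main conceptual obstacle is not the computation but the careful use of the transform-domain dictionary: one must recognize that under $L$ the $*_c$-product becomes face-wise matrix multiplication, the conjugate transpose becomes slice-wise matrix conjugate transpose, unitarity in the $*_c$ sense reduces to slice-wise matrix unitarity, and the identity tensor corresponds to slice-wise identity matrices. Once these correspondences are in place, the existence of the C-SVD is an immediate consequence of the existence of the matrix SVD for each of the $n_3$ frontal slices of $\widehat{\mathcal{A}}$, and the F-diagonal structure of $\mathcal{S}$ follows from the F-diagonal lemma cited above.
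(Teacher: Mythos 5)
Your proposal is correct: transforming with $L$, taking a matrix SVD of each frontal slice, folding back with $L^{-1}$, and checking unitarity against $\mathcal{I}$ and F-diagonality via the paper's lemma is exactly the slice-wise transform-domain technique this paper relies on (it cites the C-SVD from \cite{KKA} without reproducing a proof, but proves the C-QR and C-Schur decompositions in precisely this way, and the construction inside Theorem \ref{t2} builds the slice-wise SVD identically). No gaps; the argument matches the paper's approach.
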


\begin{theorem}\label{t2}
The Moore-Penrose inverse of an arbitrary tensor $\mathcal{A}\in\CC^{n_1\times n_2\times n_3}$ exists and is unique.
\end{theorem}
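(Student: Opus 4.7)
The plan is to exploit the fact that $L$ is an invertible linear transform which turns the C-product into the face-wise product and preserves conjugate transposition slice by slice. Under this dictionary, a single tensor equation over $\CC^{n_1\times n_2\times n_3}$ splits into $n_3$ independent matrix equations, one per frontal slice of $L(\mathcal{A})$, so both existence and uniqueness can be reduced to the classical matrix Moore--Penrose result.

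For existence, I would first compute $\widehat{\mathcal{A}} = L(\mathcal{A}) \in \CC^{n_1\times n_2\times n_3}$ and look at its frontal slices $\widehat{\mathcal{A}}^{(i)}$, each of which is an ordinary complex matrix and therefore admits a unique matrix Moore--Penrose inverse $(\widehat{\mathcal{A}}^{(i)})^\dagger$. Define $\widehat{\mathcal{X}} \in \CC^{n_2\times n_1\times n_3}$ by $\widehat{\mathcal{X}}^{(i)} = (\widehat{\mathcal{A}}^{(i)})^\dagger$ for $i=1,\dots,n_3$ and set $\mathcal{X} = L^{-1}(\widehat{\mathcal{X}})$. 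To check the four equations in (\ref{mp}), apply $L$ to both sides and use $L(\mathcal{A}*_c\mathcal{B}) = L(\mathcal{A})\triangle L(\mathcal{B})$ together with the slice-wise characterization $L(\mathcal{A}^H)^{(i)}=(L(\mathcal{A})^{(i)})^H$; on the $i$-th frontal slice each of the four identities becomes exactly a defining matrix Moore--Penrose equation for $\widehat{\mathcal{A}}^{(i)}$ with inverse $(\widehat{\mathcal{A}}^{(i)})^\dagger$, and so holds. Alternatively, the C-SVD $\mathcal{A}=\mathcal{U}*_c\mathcal{S}*_c\mathcal{V}^H$ gives the explicit candidate $\mathcal{X} = \mathcal{V}*_c\mathcal{S}^\dagger *_c\mathcal{U}^H$, where $\mathcal{S}^\dagger$ is the F-diagonal tensor obtained by inverting the nonzero diagonal tubes of $\mathcal{S}$ in the $L$-domain; unitarity of $\mathcal{U},\mathcal{V}$ then reduces the verification to the F-diagonal case, which is immediate.

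For uniqueness, I would take two candidates $\mathcal{X}_1,\mathcal{X}_2$ satisfying (\ref{mp}), apply $L$, and observe that for each $i$ the matrices $\widehat{\mathcal{X}_1}^{(i)}$ and $\widehat{\mathcal{X}_2}^{(i)}$ both satisfy the four classical Moore--Penrose equations with respect to $\widehat{\mathcal{A}}^{(i)}$; uniqueness of the matrix Moore--Penrose inverse forces them to coincide on every slice, and invertibility of $L$ yields $\mathcal{X}_1=\mathcal{X}_2$. A more self-contained alternative is to mimic the standard five-line matrix argument purely at the tensor level, writing $\mathcal{X}_1 = \mathcal{X}_1*_c\mathcal{A}*_c\mathcal{X}_1 = \mathcal{X}_1*_c(\mathcal{A}*_c\mathcal{X}_2*_c\mathcal{A})*_c\mathcal{X}_1$ and chaining the Hermitian conditions; associativity and distributivity of $*_c$ from the earlier lemma make this go through verbatim.

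No real obstacle is expected here: the entire statement is structural. The only care needed is the bookkeeping that ensures the slice-wise constructions transport correctly through $L$ and $L^{-1}$ and that the definition of $\mathcal{A}^H$ via $L(\mathcal{A}^H)^{(i)}=(L(\mathcal{A})^{(i)})^H$ is used consistently with the face-wise product. Once the $L$-transform is applied, existence and uniqueness of the tensor Moore--Penrose inverse are immediate consequences of the corresponding matrix facts applied $n_3$ times.
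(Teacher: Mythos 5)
Your proposal is correct and follows essentially the same route as the paper: the paper also reduces to the matrix case via the transform (through the DCT block-diagonalization of $\matt(\mathcal{A})$, i.e.\ the slices of $L(\mathcal{A})$), builds the existence candidate from slice-wise SVDs (equivalently your slice-wise $(\widehat{\mathcal{A}}^{(i)})^\dagger$, which is also its Algorithm for $\mathcal{A}^\dag$), and proves uniqueness with exactly the tensor-level Penrose chain you give as your alternative. No gaps; the only cosmetic difference is that the paper writes out the slice-wise SVD construction explicitly rather than invoking the matrix Moore--Penrose inverse of each $L(\mathcal{A})^{(i)}$ directly.
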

\begin{proof} By Lemma \ref{ll2}, one has
\begin{equation*}
(\mathbf{C}_{n_3}\otimes\mathbf{I}_{n_1})\matt(\mathcal{A})(\mathbf{C}^{-1}_{n_3}\otimes\mathbf{I}_{n_2})  =\left[
   \begin{array}{cccc}
     L(\mathcal{A})^{(1)} &  &  &  \\
      & L(\mathcal{A})^{(2)} &  &  \\
      &  & \ddots &  \\
      &  &  & L(\mathcal{A})^{(n_3)} \\
   \end{array}
 \right].
\end{equation*}
Let $L(\mathcal{A})^{(i)}=\mathbf{U}_i\mathbf{\Sigma}_i\mathbf{V}^H_i$ be the singular value decomposition of $L(\mathcal{A})^{(i)}$, $i=1,...,n_3$. Thus, we have
\begin{eqnarray*}
(\mathbf{C}_{n_3}\otimes\mathbf{I}_{n_1})\matt(\mathcal{A})(\mathbf{C}^{-1}_{n_3}\otimes\mathbf{I}_{n_2})  &=&\left[
   \begin{array}{cccc}
     L(\mathcal{A})^{(1)} &  &  &  \\
      & L(\mathcal{A})^{(2)} &  &  \\
      &  & \ddots &  \\
      &  &  & L(\mathcal{A})^{(n_3)} \\
   \end{array}
 \right]\\
 &=&\left[
   \begin{array}{cccc}
     \mathbf{U}_1\mathbf{\Sigma}_1\mathbf{V}^H_1 &  &  &  \\
      & \mathbf{U}_2\mathbf{\Sigma}_2\mathbf{V}^H_2 &  &  \\
      &  & \ddots &  \\
      &  &  & \mathbf{U}_{n_3}\mathbf{\Sigma}_{n_3}\mathbf{V}^H_{n_3} \\
   \end{array}
 \right].
\end{eqnarray*}
For each $$\mathbf{\Sigma}_i=\left[
                                   \begin{array}{ccccccc}
                                     \sigma^i_1 &  &  & && &\\
                                      &  \ddots&  & & &&\\
                                      &  & & \sigma^i_{r_i} &&&\\
                                      &  &  & &0&&\\
                                      &&&&&\ddots&\\
                                      &&&&&&0\\
                                   \end{array}
                                 \right],$$
$\sigma^i_j, j=1,2,...,{r_i}$, $r_i=rank(L(\mathcal{A})^{(i)})$ are singular values of $L(\mathcal{A})^{(i)}$. We define the matrices $\mathbf{R}_i$, $i=1,...,n_3$, as
$$\mathbf{R}_i=\left[
                                   \begin{array}{ccccccc}
                                     \frac{1}{\sigma^i_1} &  &  & && &\\
                                      &  \ddots&  & & &&\\
                                      &  & & \frac{1}{\sigma^i_{r_i}} &&&\\
                                      &  &  & &0&&\\
                                      &&&&&\ddots&\\
                                      &&&&&&0\\
                                   \end{array}
                                 \right].$$
Observe that $\mathbf{R}_i = \mathbf{\Sigma}_i^\dag$ for $i=1, \ldots, n_3$. Let
$\mathbf{X}_i = \mathbf{V}_i \mathbf{R}_i \mathbf{U}_i^H$ for $i=1, \ldots, n_3$. Now, we have
\begin{equation*}\label{svdd}
  \begin{bmatrix}
                     \mathbf{X}_1 &   &   \\
                       & \ddots   &   \\
                       &   &  \mathbf{X}_{n_3} \\
                   \end{bmatrix}=\begin{bmatrix}
                     \mathbf{V}_1 &   &   \\
                       & \ddots   &   \\
                       &   &  \mathbf{V}_{n_3} \\
                   \end{bmatrix}\begin{bmatrix}
                     \mathbf{R}_1 &   &   \\
                       & \ddots   &   \\
                       &   &  \mathbf{R}_{n_3} \\
                   \end{bmatrix}\begin{bmatrix}
                     \mathbf{U}^H_1 &   &   \\
                       & \ddots   &   \\
                       &   &  \mathbf{U}^H_{n_3} \\
                   \end{bmatrix}.
\end{equation*}
Thus,
\begin{eqnarray*}
 \ten((\mathbf{C}^{-1}_{n_3}\otimes\mathbf{I}_{n_2})\begin{bmatrix}
                     \mathbf{X}_1 &   &   \\
                       & \ddots   &   \\
                       &   &  \mathbf{X}_{n_3} \\
                   \end{bmatrix}(\mathbf{C}_{n_3}\otimes\mathbf{I}_{n_1}))
                   &=&\ten((\mathbf{C}^{-1}_{n_3}\otimes\mathbf{I}_{n_2})\begin{bmatrix}
                     \mathbf{V}_1 &   &   \\
                       & \ddots   &   \\
                       &   &  \mathbf{V}_{n_3} \\
                   \end{bmatrix}(\mathbf{C}_{n_3}\otimes\mathbf{I}_{n_1}))\\
                 & \times& \ten((\mathbf{C}^{-1}_{n_3}\otimes\mathbf{I}_{n_2})\begin{bmatrix}
                     \mathbf{R}_1 &   &   \\
                       & \ddots   &   \\
                       &   &  \mathbf{R}_{n_3} \\
                   \end{bmatrix}(\mathbf{C}_{n_3}\otimes\mathbf{I}_{n_1}))\\
                   & \times& \ten((\mathbf{C}^{-1}_{n_3}\otimes\mathbf{I}_{n_2})\begin{bmatrix}
                     \mathbf{U}^H_1 &   &   \\
                       & \ddots   &   \\
                       &   &  \mathbf{U}^H_{n_3} \\
                   \end{bmatrix}(\mathbf{C}_{n_3}\otimes\mathbf{I}_{n_1})),
\end{eqnarray*}
that is $\mathcal{X}=\mathcal{V}*_c\mathcal{R}*_c\mathcal{U}^H$. It is easy to check that $\mathcal{X}$ satisfies $(\ref{mp})$, which means the Moore-Penrose inverse of a tensor $\mathcal{A}$ exists.

On the other hand, suppose $\mathcal{X}_1$ and $\mathcal{X}_2$ both are the  solutions of $(\ref{mp})$. Then, we have
\begin{eqnarray*}
% \nonumber to remove numbering (before each equation)
  \mathcal{X}_1 &=& \mathcal{X}_1*_c\mathcal{A}*_c\mathcal{X}_1=
  \mathcal{X}_1*_c(\mathcal{A}*_c\mathcal{X}_2*_c\mathcal{A})*_c\mathcal{X}_1=\mathcal{X}_1
  *_c(\mathcal{A}*_c\mathcal{X}_2)^H*_c(\mathcal{A}*_c\mathcal{X}_1)^H \\
   &=& \mathcal{X}_1*_c(\mathcal{A}*_c\mathcal{X}_1*_c\mathcal{A}*_c\mathcal{X}_2)^H
   =\mathcal{X}_1*_c(\mathcal{A}*_c\mathcal{X}_2)^H \\
   &=& \mathcal{X}_1*_c\mathcal{A}*_c\mathcal{X}_2\\
   &=&  \mathcal{X}_1*_c(\mathcal{A}*_c\mathcal{X}_2*_c\mathcal{A})*_c\mathcal{X}_2
   =(\mathcal{X}_1*_c\mathcal{A})^H*_c(\mathcal{X}_2*_c\mathcal{A})^H*_c\mathcal{X}_2\\
   &=&  (\mathcal{X}_2*_c\mathcal{A}*_c\mathcal{X}_1*_c\mathcal{A})^H*_c\mathcal{X}_2
   =(\mathcal{X}_2*_c\mathcal{A})^H*_c\mathcal{X}_2\\
   &=& \mathcal{X}_2*_c\mathcal{A}*_c\mathcal{X}_2=\mathcal{X}_2.
\end{eqnarray*}
Therefore, the Moore-Penrose inverse of $\mathcal{A}$ is unique.
\end{proof}

\begin{theorem}
Let $\mathcal{A}\in\CC^{n_1\times n_2\times n_3}$ and $\mathcal{A}=\mathcal{U}*_c\mathcal{S}*_c\mathcal{V}^H$ be the C-SVD of $\mathcal{A}$.
Then,
\begin{equation*}
  \mathcal{A}^\dag=\mathcal{V}*_c\mathcal{S}^\dag*_c\mathcal{U}^H.
\end{equation*}
\end{theorem}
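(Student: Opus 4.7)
The strategy is to appeal to the uniqueness part of Theorem \ref{t2}: since the Moore-Penrose inverse of $\mathcal{A}$ exists and is unique, it suffices to verify that the candidate $\mathcal{X} := \mathcal{V}*_c\mathcal{S}^\dag*_c\mathcal{U}^H$ satisfies the four defining equations $(\ref{mp})$. Unitarity of $\mathcal{U}$ and $\mathcal{V}$ gives $\mathcal{U}^H*_c\mathcal{U} = \mathcal{I}$ and $\mathcal{V}^H*_c\mathcal{V} = \mathcal{I}$, which will collapse every product $\mathcal{V}^H*_c\mathcal{V}$ and $\mathcal{U}^H*_c\mathcal{U}$ sandwiched between the other factors.

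First I would compute
\begin{equation*}
\mathcal{A}*_c\mathcal{X} = \mathcal{U}*_c\mathcal{S}*_c(\mathcal{V}^H*_c\mathcal{V})*_c\mathcal{S}^\dag*_c\mathcal{U}^H = \mathcal{U}*_c(\mathcal{S}*_c\mathcal{S}^\dag)*_c\mathcal{U}^H,
\end{equation*}
and symmetrically $\mathcal{X}*_c\mathcal{A} = \mathcal{V}*_c(\mathcal{S}^\dag*_c\mathcal{S})*_c\mathcal{V}^H$. Then equations (1) and (2) follow from $\mathcal{S}*_c\mathcal{S}^\dag*_c\mathcal{S} = \mathcal{S}$ and $\mathcal{S}^\dag*_c\mathcal{S}*_c\mathcal{S}^\dag = \mathcal{S}^\dag$, while equations (3) and (4) reduce, after applying the identity $(\ac*_c\bc)^H = \bc^H*_c\ac^H$ and unitarity, to the Hermitian symmetry of $\mathcal{S}*_c\mathcal{S}^\dag$ and $\mathcal{S}^\dag*_c\mathcal{S}$.

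The main obstacle, and really the only genuine content, is justifying that $\mathcal{S}^\dag$ itself satisfies the four Moore-Penrose relations with respect to the F-diagonal tensor $\mathcal{S}$. The clean way is to work in the L-transformed domain: from part (2) of the C-product lemma, every relation of the form $\mathcal{A}_1*_c\mathcal{A}_2 = \mathcal{A}_3$ is equivalent to $L(\mathcal{A}_1)\triangle L(\mathcal{A}_2) = L(\mathcal{A}_3)$, which holds slice-wise. By construction in the proof of Theorem \ref{t2}, the frontal slices $L(\mathcal{S})^{(i)} = \mathbf{\Sigma}_i$ are rectangular diagonal matrices and $L(\mathcal{S}^\dag)^{(i)} = \mathbf{R}_i = \mathbf{\Sigma}_i^\dag$. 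Thus on each face the four Moore-Penrose identities hold as ordinary matrix identities; lifting back through $L^{-1}$ (and using that the conjugate-transpose on tensors is defined slice-wise in the transformed domain) yields the desired tensor identities for $\mathcal{S}$ and $\mathcal{S}^\dag$.

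Combining these two pieces, the candidate $\mathcal{X}$ satisfies $(\ref{mp})$, and by uniqueness $\mathcal{A}^\dag = \mathcal{V}*_c\mathcal{S}^\dag*_c\mathcal{U}^H$. The verification is essentially the tensor analogue of the classical matrix argument $A = U\Sigma V^H \Rightarrow A^\dag = V\Sigma^\dag U^H$, with the L-transform playing the role that the block-diagonalization does in passing from the C-product to face-wise matrix products.
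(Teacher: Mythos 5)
Your proposal is correct and follows essentially the same route as the paper, which simply asserts that $\mathcal{V}*_c\mathcal{S}^\dag*_c\mathcal{U}^H$ satisfies the four equations of $(\ref{mp})$; you carry out that verification explicitly, using unitarity of $\mathcal{U},\mathcal{V}$ and the slice-wise (L-transformed) description of $\mathcal{S}^\dag$. The only small touch-up needed is that $L(\mathcal{S}^\dag)^{(i)}=(L(\mathcal{S})^{(i)})^\dag$ should be justified by applying the existence--uniqueness argument of Theorem \ref{t2} to the tensor $\mathcal{S}$ itself (the construction there is stated for $\mathcal{A}$, but works verbatim for any tensor), after which your argument is complete.
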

\begin{proof}
It is easy to check that $\mathcal{V}*_c\mathcal{S}^\dag*_c\mathcal{U}^H$ holds for the four equations of $(\ref{mp})$.
\end{proof}

\begin{theorem}\label{t33}
Let $\mathcal{A}\in\CC^{n_1\times n_2\times n_3}$. Then there exist
a unitary tensor $\mathcal{Q}\in\CC^{n_1\times n_1\times n_3}$ and an F-upper  tensor $\mathcal{R}\in\CC^{n_1\times n_2\times n_3}$
such that
\begin{equation*}
  \mathcal{A}=\mathcal{Q}*_c\mathcal{R},
\end{equation*}
which is called the C-QR decomposition of $\mathcal{A}$.
\end{theorem}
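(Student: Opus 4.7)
The plan is to lift the problem to the transform domain, apply the classical matrix QR decomposition face-by-face, and then pull everything back through $L^{-1}$, using the two key structural facts already established in the excerpt: (i) the product rule $\mathcal{A}*_c\mathcal{B} = L^{-1}(L(\mathcal{A}) \triangle L(\mathcal{B}))$, and (ii) the lemma that $\mathcal{R}$ is F-upper if and only if $L(\mathcal{R})$ is F-upper.

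Concretely, first I would compute $\widehat{\mathcal{A}} = L(\mathcal{A}) \in \CC^{n_1\times n_2\times n_3}$ and look at its frontal slices $\widehat{\mathcal{A}}^{(i)}$ for $i = 1,\ldots,n_3$. Each of these is an ordinary $n_1 \times n_2$ complex matrix, so by the standard (matrix) QR decomposition I can write $\widehat{\mathcal{A}}^{(i)} = \mathbf{Q}_i \mathbf{R}_i$, where $\mathbf{Q}_i \in \CC^{n_1\times n_1}$ is unitary and $\mathbf{R}_i \in \CC^{n_1\times n_2}$ is upper triangular. I then assemble two tensors $\widehat{\mathcal{Q}}, \widehat{\mathcal{R}}$ slice-wise by setting $\widehat{\mathcal{Q}}^{(i)} = \mathbf{Q}_i$ and $\widehat{\mathcal{R}}^{(i)} = \mathbf{R}_i$, and finally define
\begin{equation*}
\mathcal{Q} = L^{-1}(\widehat{\mathcal{Q}}), \qquad \mathcal{R} = L^{-1}(\widehat{\mathcal{R}}).
\end{equation*}

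Next I would verify the three required properties. For the factorization itself, observe that by construction $\widehat{\mathcal{A}}^{(i)} = \widehat{\mathcal{Q}}^{(i)} \widehat{\mathcal{R}}^{(i)}$ for every $i$, i.e.\ $L(\mathcal{A}) = L(\mathcal{Q}) \triangle L(\mathcal{R})$; applying $L^{-1}$ and invoking part (2) of the product lemma yields $\mathcal{A} = \mathcal{Q}*_c\mathcal{R}$. For the upper-triangular structure, since each $\widehat{\mathcal{R}}^{(i)} = \mathbf{R}_i$ is upper triangular, $\widehat{\mathcal{R}} = L(\mathcal{R})$ is an F-upper tensor, and the preceding lemma gives that $\mathcal{R}$ is F-upper as well. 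For unitarity of $\mathcal{Q}$, I use the definition of the conjugate transpose, which says $L(\mathcal{Q}^H)^{(i)} = (L(\mathcal{Q})^{(i)})^H = \mathbf{Q}_i^H$. Then, face-wise,
\begin{equation*}
L(\mathcal{Q}^H *_c \mathcal{Q})^{(i)} = \mathbf{Q}_i^H \mathbf{Q}_i = \mathbf{I}_{n_1} = \widehat{\mathcal{I}}^{(i)},
\end{equation*}
and similarly $L(\mathcal{Q}*_c\mathcal{Q}^H)^{(i)} = \mathbf{I}_{n_1}$; applying $L^{-1}$ and recalling the definition of the identity tensor gives $\mathcal{Q}^H *_c \mathcal{Q} = \mathcal{Q}*_c\mathcal{Q}^H = \mathcal{I}$.

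No step here is a serious obstacle, since everything reduces to a slice-wise application of classical linear algebra; the only mild subtlety is being careful that the correspondence between F-upper structure in the spatial domain and in the transform domain is exactly what the earlier lemma provides, so that triangularity is preserved after passing through $L^{-1}$. This is precisely why the authors went to the trouble of proving that lemma, and it makes the verification of F-upper-ness for $\mathcal{R}$ immediate rather than requiring a direct combinatorial check on $\matt(\mathcal{R})$.
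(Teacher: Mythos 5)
Your proposal is correct and follows essentially the same route as the paper's proof: apply the matrix QR decomposition to the frontal slices of $L(\mathcal{A})$, assemble $\mathcal{Q}$ and $\mathcal{R}$ by $L^{-1}$, check unitarity face-wise, and use the F-upper lemma to transfer triangularity back to the spatial domain. Your write-up is in fact slightly more explicit than the paper's (which only checks $\mathcal{Q}*_c\mathcal{Q}^H=\mathcal{I}$), but there is no substantive difference.
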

\begin{proof}
Let $\widehat{\mathcal{A}}=L(\mathcal{A})$, $\widehat{\mathcal{Q}}=L(\mathcal{Q})$ and $\widehat{\mathcal{R}}=L(\mathcal{R})$. Suppose $\widehat{\mathcal{A}}^{(i)}=\mathbf{Q}_i\mathbf{R}_i=\widehat{\mathcal{Q}}^{(i)}\widehat{\mathcal{R}}^{(i)}$, $i=1,2,...,n_3$, are the QR decomposition of $\widehat{\mathcal{A}}^{(i)}$. Hence, $\mathcal{A}=\mathcal{Q}*_c\mathcal{R}$. Furthermore, one has $L(\mathcal{Q}*_c\mathcal{Q}^H)=L(\mathcal{Q})\triangle L(\mathcal{Q}^H)$. Thus,
\begin{equation*}
L(\mathcal{Q})^{(i)}L(\mathcal{Q}^H)^{(i)}=\widehat{\mathcal{Q}}^{(i)}(\widehat{\mathcal{Q}}^{(i)})^H
  =\mathbf{I}_{n_1}=L(\mathcal{I})^{(i)}, \ i=1,2,...,n_3.
\end{equation*}
This implies $\mathcal{Q}*_c\mathcal{Q}^H=\mathcal{I}$, that is $\mathcal{Q}$ is a unitary tensor. On the other hand, $\mathbf{R}_i$ are upper  triangular matrices and so are $\widehat{\mathcal{R}}^{(i)}$. This implies $\mathcal{R}$ is an F-upper tensor.
\end{proof}

\begin{theorem}
Let $\mathcal{A}\in\CC^{n_1\times n_2\times n_3}$ and $\mathcal{A}=\mathcal{Q}*_c\mathcal{R}$ be the C-QR decomposition of $\mathcal{A}$.
Then,
\begin{equation*}
  \mathcal{A}^\dag=\mathcal{R}^\dag*_c\mathcal{Q}^H.
\end{equation*}
\end{theorem}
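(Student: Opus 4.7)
The plan is to verify directly that $\mathcal{X} := \mathcal{R}^\dag *_c \mathcal{Q}^H$ satisfies all four Moore-Penrose equations in (\ref{mp}) for $\mathcal{A} = \mathcal{Q} *_c \mathcal{R}$, and then invoke the uniqueness part of Theorem \ref{t2}. Existence of $\mathcal{R}^\dag$ is guaranteed by the same theorem, so $\mathcal{X}$ is well-defined.

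The three tools I would lean on are: (i) associativity of $*_c$, proved earlier; (ii) the unitary identity $\mathcal{Q}^H *_c \mathcal{Q} = \mathcal{I}$, which lets a $\mathcal{Q}^H *_c \mathcal{Q}$ sandwich collapse to the identity tensor (and $\mathcal{I}$ is a two-sided identity by the lemma preceding Definition \ref{d9}); and (iii) the Moore-Penrose identities satisfied by $\mathcal{R}^\dag$ with respect to $\mathcal{R}$, together with the conjugate-transpose product rule $(\mathcal{A} *_c \mathcal{B})^H = \mathcal{B}^H *_c \mathcal{A}^H$.

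With these in hand, I would carry out the four computations in order. First, $\mathcal{A}*_c\mathcal{X}*_c\mathcal{A} = \mathcal{Q}*_c\mathcal{R}*_c\mathcal{R}^\dag*_c(\mathcal{Q}^H*_c\mathcal{Q})*_c\mathcal{R} = \mathcal{Q}*_c(\mathcal{R}*_c\mathcal{R}^\dag*_c\mathcal{R}) = \mathcal{Q}*_c\mathcal{R} = \mathcal{A}$. Second, by an entirely symmetric collapse, $\mathcal{X}*_c\mathcal{A}*_c\mathcal{X} = \mathcal{R}^\dag*_c\mathcal{R}*_c\mathcal{R}^\dag*_c\mathcal{Q}^H = \mathcal{R}^\dag*_c\mathcal{Q}^H = \mathcal{X}$. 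Third, $\mathcal{A}*_c\mathcal{X} = \mathcal{Q}*_c(\mathcal{R}*_c\mathcal{R}^\dag)*_c\mathcal{Q}^H$; applying $(\cdot)^H$ and pushing the conjugate transpose inward via the product rule gives $\mathcal{Q}*_c(\mathcal{R}*_c\mathcal{R}^\dag)^H*_c\mathcal{Q}^H$, and Hermitian symmetry of $\mathcal{R}*_c\mathcal{R}^\dag$ then returns the original expression. Finally, $\mathcal{X}*_c\mathcal{A} = \mathcal{R}^\dag*_c(\mathcal{Q}^H*_c\mathcal{Q})*_c\mathcal{R} = \mathcal{R}^\dag*_c\mathcal{R}$, whose Hermitian symmetry is again immediate from (\ref{mp}) applied to $\mathcal{R}$.

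There is no real obstacle here; the argument is a bookkeeping exercise in the C-product algebra, and the only place one has to be careful is in the third equation, where the unitary collapse must happen \emph{before} taking the conjugate transpose so that the Hermitian symmetry being used is the one that $\mathcal{R}^\dag$ is actually guaranteed to satisfy (namely $(\mathcal{R}*_c\mathcal{R}^\dag)^H = \mathcal{R}*_c\mathcal{R}^\dag$), rather than a symmetry about $\mathcal{A}$ that we have not yet established. Once the four equations check out, uniqueness of the Moore-Penrose inverse (Theorem \ref{t2}) forces $\mathcal{A}^\dag = \mathcal{X} = \mathcal{R}^\dag*_c\mathcal{Q}^H$.
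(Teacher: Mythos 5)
Your proposal is correct and follows exactly the route the paper takes: the paper's proof simply asserts that $\mathcal{R}^\dag*_c\mathcal{Q}^H$ satisfies the four equations of (\ref{mp}), and your computation is the straightforward verification of that claim using associativity, $\mathcal{Q}^H*_c\mathcal{Q}=\mathcal{I}$, the product rule for $(\cdot)^H$, and the Moore-Penrose identities for $\mathcal{R}^\dag$, followed by uniqueness from Theorem \ref{t2}. Nothing further is needed.
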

\begin{proof}
It is easy to check that $\mathcal{R}^\dag*_c\mathcal{Q}^H$ holds for the four equations of $(\ref{mp})$.
\end{proof}

\begin{theorem}
Let $\mathcal{A}\in\CC^{n\times n\times n_3}$. Then there exist
a unitary tensor $\mathcal{Q}\in\CC^{n\times n\times n_3}$ and an F-upper  tensor $\mathcal{T}\in\CC^{n\times n\times n_3}$
such that
\begin{equation*}
  \mathcal{A}=\mathcal{Q}^H*_c\mathcal{T}*_c\mathcal{Q},
\end{equation*}
which is called the C-Schur decomposition of $\mathcal{A}$.
\end{theorem}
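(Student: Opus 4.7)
The strategy mirrors the C-QR proof: push everything through the transform $L$, do a classical Schur decomposition face-by-face, and pull the result back.

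First, let $\widehat{\mathcal{A}} = L(\mathcal{A})$. Each frontal slice $\widehat{\mathcal{A}}^{(i)} \in \CC^{n\times n}$ admits a matrix Schur decomposition
\begin{equation*}
\widehat{\mathcal{A}}^{(i)} = \mathbf{Q}_i^H \mathbf{T}_i \mathbf{Q}_i, \qquad i = 1, 2, \ldots, n_3,
\end{equation*}
where $\mathbf{Q}_i$ is unitary and $\mathbf{T}_i$ is upper triangular. Build tensors $\widehat{\mathcal{Q}}, \widehat{\mathcal{T}} \in \CC^{n \times n \times n_3}$ whose frontal slices are $\widehat{\mathcal{Q}}^{(i)} = \mathbf{Q}_i$ and $\widehat{\mathcal{T}}^{(i)} = \mathbf{T}_i$, and set $\mathcal{Q} = L^{-1}(\widehat{\mathcal{Q}})$ and $\mathcal{T} = L^{-1}(\widehat{\mathcal{T}})$.

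Next I verify the three claims. For the identity $\mathcal{A} = \mathcal{Q}^H *_c \mathcal{T} *_c \mathcal{Q}$, I apply $L$ to the right-hand side and use $\ac *_c \bc = L^{-1}(L(\ac) \triangle L(\bc))$ together with the rule $L(\ac^H)^{(i)} = (L(\ac)^{(i)})^H$ from the definition of the conjugate transpose. Face-wise this gives
\begin{equation*}
L(\mathcal{Q}^H *_c \mathcal{T} *_c \mathcal{Q})^{(i)} = \mathbf{Q}_i^H \mathbf{T}_i \mathbf{Q}_i = \widehat{\mathcal{A}}^{(i)} = L(\mathcal{A})^{(i)},
\end{equation*}
for each $i$, so invertibility of $L$ yields the desired factorization. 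For unitarity of $\mathcal{Q}$, the same face-wise calculation shows $L(\mathcal{Q}^H *_c \mathcal{Q})^{(i)} = \mathbf{Q}_i^H \mathbf{Q}_i = \mathbf{I}_n = L(\mathcal{I})^{(i)}$, and analogously for $\mathcal{Q} *_c \mathcal{Q}^H$, so $\mathcal{Q}^H *_c \mathcal{Q} = \mathcal{Q} *_c \mathcal{Q}^H = \mathcal{I}$. Finally, each $\widehat{\mathcal{T}}^{(i)} = \mathbf{T}_i$ is upper triangular, so $L(\mathcal{T}) = \widehat{\mathcal{T}}$ is F-upper, and by the preceding lemma $\mathcal{T}$ itself is F-upper.

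The only subtle point is the interaction between $L$ and the conjugate transpose: one must read the definition $L(\ac^H)^{(i)} = (L(\ac)^{(i)})^H$ correctly so that the face-wise calculation of $L(\mathcal{Q}^H *_c \mathcal{T} *_c \mathcal{Q})$ genuinely reproduces $\mathbf{Q}_i^H \mathbf{T}_i \mathbf{Q}_i$. Once that is in hand, everything reduces to the $n_3$ independent matrix Schur decompositions, and the argument is essentially the same template used for the C-QR decomposition in Theorem~\ref{t33}.
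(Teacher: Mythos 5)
Your proof is correct and follows essentially the same route as the paper's: a slice-wise matrix Schur decomposition of $L(\mathcal{A})$, pulled back through $L^{-1}$, with unitarity of $\mathcal{Q}$ and the F-upper property of $\mathcal{T}$ verified face-wise exactly as in the C-QR argument of Theorem~\ref{t33}. Your extra care about how $L$ interacts with the conjugate transpose only makes explicit what the paper leaves implicit.
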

\begin{proof}
Let $\widehat{\mathcal{A}}=L(\mathcal{A})$, $\widehat{\mathcal{Q}}=L(\mathcal{Q})$ and $\widehat{\mathcal{T}}=L(\mathcal{T})$. Suppose $\widehat{\mathcal{A}}^{(i)}=\mathbf{Q}^H_i\mathbf{T}_i\mathbf{Q}_i
=(\widehat{\mathcal{Q}}^{(i)})^H\widehat{\mathcal{T}}^{(i)}\widehat{\mathcal{Q}}^{(i)}$, $i=1,2,...,n_3$, are the Schur decomposition of $\widehat{\mathcal{A}}^{(i)}$. Thus, $\mathcal{A}=\mathcal{Q}^H*_c\mathcal{T}*_c\mathcal{Q}$. By the proof of Theorem \ref{t33}, $\mathcal{Q}$ is a unitary tensor. On the other hand, $\mathbf{T}_i$ are upper  triangular matrices and so are $\widehat{\mathcal{T}}^{(i)}$. This implies $\mathcal{T}$ is an F-upper tensor.
\end{proof}

\begin{theorem}
Let $\mathcal{A}\in\CC^{n\times n\times n_3}$ and $\mathcal{A}=\mathcal{Q}^H*_c\mathcal{T}*_c\mathcal{Q}$ be the C-Schur decomposition of $\mathcal{A}$.
Then,
\begin{equation*}
  \mathcal{A}^\dag=\mathcal{Q}^H*_c\mathcal{T}^\dag*_c\mathcal{Q}.
  \end{equation*}
\end{theorem}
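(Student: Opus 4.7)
The plan is to verify directly that $\mathcal{X} := \mathcal{Q}^H *_c \mathcal{T}^\dag *_c \mathcal{Q}$ satisfies the four Moore-Penrose equations in $(\ref{mp})$, and then invoke the uniqueness part of Theorem \ref{t2} to conclude that $\mathcal{X}=\mathcal{A}^\dag$. The existence of $\mathcal{T}^\dag$ is guaranteed by Theorem \ref{t2} applied to $\mathcal{T}$, so the candidate $\mathcal{X}$ is well defined.

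The two main tools I will use are the unitarity identities $\mathcal{Q}*_c\mathcal{Q}^H = \mathcal{Q}^H*_c\mathcal{Q} = \mathcal{I}$ and the associativity of the C-product established earlier. First, for equation (1), I would expand
\begin{equation*}
\mathcal{A}*_c\mathcal{X}*_c\mathcal{A} = \mathcal{Q}^H *_c \mathcal{T} *_c (\mathcal{Q}*_c\mathcal{Q}^H) *_c \mathcal{T}^\dag *_c (\mathcal{Q}*_c\mathcal{Q}^H) *_c \mathcal{T} *_c \mathcal{Q},
\end{equation*}
collapse the parenthesized pairs to $\mathcal{I}$, and use the first Moore-Penrose equation for $\mathcal{T}$, namely $\mathcal{T}*_c\mathcal{T}^\dag*_c\mathcal{T} = \mathcal{T}$, to recover $\mathcal{A}$. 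Equation (2) is handled symmetrically: inserting $\mathcal{Q}*_c\mathcal{Q}^H=\mathcal{I}$ between adjacent factors and applying $\mathcal{T}^\dag*_c\mathcal{T}*_c\mathcal{T}^\dag = \mathcal{T}^\dag$ yields $\mathcal{X}*_c\mathcal{A}*_c\mathcal{X}=\mathcal{X}$.

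For the Hermitian conditions (3) and (4), I would use the product-reversal rule $(\mathcal{B}*_c\mathcal{C})^H = \mathcal{C}^H*_c\mathcal{B}^H$ proved earlier. Computing $\mathcal{A}*_c\mathcal{X} = \mathcal{Q}^H*_c(\mathcal{T}*_c\mathcal{T}^\dag)*_c\mathcal{Q}$ and taking conjugate transposes,
\begin{equation*}
(\mathcal{A}*_c\mathcal{X})^H = \mathcal{Q}^H *_c (\mathcal{T}*_c\mathcal{T}^\dag)^H *_c \mathcal{Q} = \mathcal{Q}^H *_c (\mathcal{T}*_c\mathcal{T}^\dag) *_c \mathcal{Q} = \mathcal{A}*_c\mathcal{X},
\end{equation*}
where the middle equality is the third Moore-Penrose equation for $\mathcal{T}$. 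Equation (4) is identical mutatis mutandis, using the fourth Moore-Penrose equation for $\mathcal{T}$.

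There is no real obstacle here; the argument is a purely formal manipulation that mirrors the familiar matrix identity $(U^H T U)^\dag = U^H T^\dag U$ for unitary $U$. The only thing worth being careful about is making sure the associativity and the conjugate-transpose identities are applied in the correct order, and that the factor $\mathcal{Q}^H$ appearing on the outside (rather than $\mathcal{Q}$) is handled consistently; this is handled uniformly once one observes that $\mathcal{Q}^H$ is itself a unitary tensor with inverse $\mathcal{Q}$.
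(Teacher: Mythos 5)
Your proposal is correct and follows essentially the same route as the paper: both verify directly that $\mathcal{Q}^H*_c\mathcal{T}^\dag*_c\mathcal{Q}$ satisfies the four Moore-Penrose equations, using $\mathcal{Q}*_c\mathcal{Q}^H=\mathcal{I}$, associativity, the reversal rule for the conjugate transpose, and the Moore-Penrose identities for $\mathcal{T}$, then conclude by uniqueness (Theorem \ref{t2}). No substantive differences.
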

\begin{proof}
Now, we will check that $\mathcal{Q}^H*_c\mathcal{T}^\dag*_c\mathcal{Q}$ holds for the four equations of $(\ref{mp})$. Let $\mathcal{X}=\mathcal{Q}^H*_c\mathcal{T}^\dag*_c\mathcal{Q}$, we will have
\begin{eqnarray*}
  \mathcal{A}*_c\mathcal{X}*_c\mathcal{A}=\mathcal{Q}^H*_c\mathcal{T}*_c\mathcal{Q}*_c
  \mathcal{Q}^H*_c\mathcal{T}^\dag*_c\mathcal{Q}*_c
  \mathcal{Q}^H*_c\mathcal{T}*_c\mathcal{Q}=
  \mathcal{Q}^H*_c\mathcal{T}*_c\mathcal{T}^\dag*_c\mathcal{T}*_c\mathcal{Q}=
  \mathcal{Q}^H*_c\mathcal{T}*_c\mathcal{Q}=\mathcal{A},
\end{eqnarray*}
\begin{eqnarray*}
  \mathcal{X}*_c\mathcal{A}*_c\mathcal{X}=\mathcal{Q}^H*_c\mathcal{T}^\dag*_c\mathcal{Q}*_c
  \mathcal{Q}^H*_c\mathcal{T}*_c\mathcal{Q}*_c
  \mathcal{Q}^H*_c\mathcal{T}^\dag*_c\mathcal{Q}=
  \mathcal{Q}^H*_c\mathcal{T}^\dag*_c\mathcal{T}*_c\mathcal{T}^\dag*_c\mathcal{Q}=
  \mathcal{Q}^H*_c\mathcal{T}^\dag*_c\mathcal{Q}=\mathcal{X},
\end{eqnarray*}
\begin{eqnarray*}
  (\mathcal{A}*_c\mathcal{X})^H&=&(\mathcal{Q}^H*_c\mathcal{T}*_c\mathcal{Q}*_c
  \mathcal{Q}^H*_c\mathcal{T}^\dag*_c\mathcal{Q})^H=
  (\mathcal{Q}^H*_c\mathcal{T}*_c\mathcal{T}^\dag*_c\mathcal{Q})^H\\
  &=&\mathcal{Q}^H*_c\mathcal{T}*_c\mathcal{T}^\dag*_c\mathcal{Q}=
  \mathcal{Q}^H*_c\mathcal{T}*_c\mathcal{Q}*_c
  \mathcal{Q}^H*_c\mathcal{T}^\dag*_c\mathcal{Q}
  =\mathcal{A}*_c\mathcal{X}
\end{eqnarray*}
and
\begin{eqnarray*}
  (\mathcal{X}*_c\mathcal{A})^H&=&(
  \mathcal{Q}^H*_c\mathcal{T}^\dag*_c\mathcal{Q}*_c\mathcal{Q}^H*_c\mathcal{T}*_c\mathcal{Q})^H=
  (\mathcal{Q}^H*_c\mathcal{T}^\dag*_c\mathcal{T}*_c\mathcal{Q})^H\\
  &=&\mathcal{Q}^H*_c\mathcal{T}^\dag*_c\mathcal{T}*_c\mathcal{Q}=
  \mathcal{Q}^H*_c\mathcal{T}^\dag*_c\mathcal{Q}*_c
  \mathcal{Q}^H*_c\mathcal{T}*_c\mathcal{Q}
  =\mathcal{X}*_c\mathcal{A}.
\end{eqnarray*}
\end{proof}

From now on, we denote
\begin{equation*}
\DCT(\matt(\mathcal{A}))=(\mathbf{C}_{n_3}\otimes\mathbf{I}_{n_1})\matt(\mathcal{A})(\mathbf{C}^{-1}_{n_3}\otimes\mathbf{I}_{n_2})  =\left[
   \begin{array}{cccc}
     L(\mathcal{A})^{(1)} &  &  &  \\
      & L(\mathcal{A})^{(2)} &  &  \\
      &  & \ddots &  \\
      &  &  & L(\mathcal{A})^{(n_3)} \\
   \end{array}
 \right],
\end{equation*}
and
\begin{equation*}
\ten(\IDCT(\left[
   \begin{array}{cccc}
     L(\mathcal{A})^{(1)} &  &  &  \\
      & L(\mathcal{A})^{(2)} &  &  \\
      &  & \ddots &  \\
      &  &  & L(\mathcal{A})^{(n_3)} \\
   \end{array}
 \right]))=\mathcal{A}.
\end{equation*}

In the following, we give the full rank decomposition of the tensor. Notice that not all the tensors have the full rank decomposition we defined.
\begin{definition}
Let $\mathcal{A}\in\CC^{n_1\times n_2\times n_3}$. If $\mathcal{A}$ can be decomposed into
\begin{equation*}
  \mathcal{A}=\mathcal{M}*_c\mathcal{N},
\end{equation*}
where  $$\mathcal{M}=\ten(\IDCT(\left[
   \begin{array}{ccc}
     \mathbf{M}_1 &  &    \\
      &   \ddots &  \\
       &  & \mathbf{M}_{n_3} \\
   \end{array}
 \right]))\in\CC^{n_1\times r\times n_3}, \ \mathbf{M}_i\in\CC^{n_1\times r}_r, \ i=1,2,...,n_3$$  and $$\mathcal{N}=\ten(\IDCT(\left[
   \begin{array}{ccc}
     \mathbf{N}_1 &  &    \\
      &   \ddots &  \\
      &    & \mathbf{N}_{n_3} \\
   \end{array}
 \right]))\in\CC^{r\times n_2\times n_3}, \ \mathbf{N}_i\in\CC^{r\times n_2}_r, \ i=1,2,...,n_3,$$  then we call this decomposition the C-full rank decomposition of $\mathcal{A}$.
\end{definition}
{\bf Note:}
Let $\widehat{\mathcal{A}}=L(\mathcal{A})$, $\widehat{\mathcal{M}}=L(\mathcal{M})$ and $\widehat{\mathcal{N}}=L(\mathcal{N})$. Suppose $\widehat{\mathcal{A}}^{(i)}=\mathbf{M}_i\mathbf{N}_i=
\widehat{\mathcal{M}}^{(i)}\widehat{\mathcal{N}}^{(i)}$, $\mathbf{M}_i\in\CC^{n_1\times r}_r$,   $\mathbf{N}_i\in\CC^{r\times n_2}_r$,  $i=1,2,...,n_3$, are the full rank decomposition of $\widehat{\mathcal{A}}^{(i)}$. We deduce when  $rank(\widehat{\mathcal{A}}^{(i)})=r$, $i=1,2,...,n_3$, one has the decomposition of the definition established. $\Box$
%Since $\mathbf{M}_i$ are column full rank matrices, one has $\widehat{\mathcal{M}}^{(i)}$ are column full rank matrices. Also, $\mathbf{N}_i$ are row full rank matrices and so are $\widehat{\mathcal{N}}^{(i)}$. This implies $\mathcal{M}$ is an F-column full rank tensor and $\mathcal{N}$ is an F-row full rank tensor.

\begin{theorem}
Let $\mathcal{A}\in\CC^{n_1\times n_2\times n_3}$. Suppose $\mathcal{A}$ has the C-full rank decomposition $\mathcal{A}=\mathcal{M}*_c\mathcal{N}$.
Then,
\begin{equation*}
  \mathcal{A}^\dag=\mathcal{N}^H*_c(\mathcal{M}^H*_c\mathcal{A}*_c\mathcal{N}^H)^{-1}*_c\mathcal{M}^H.
\end{equation*}
\end{theorem}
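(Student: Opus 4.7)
The plan is to verify the four Moore-Penrose equations (\ref{mp}) for the candidate $\mathcal{X} := \mathcal{N}^H *_c (\mathcal{M}^H *_c \mathcal{A} *_c \mathcal{N}^H)^{-1} *_c \mathcal{M}^H$, after first establishing that the middle factor is genuinely invertible. The whole argument is easiest through the $L$-transform, since $L$ converts $*_c$ into the face-wise product and so a tensor identity reduces to a family of $n_3$ matrix identities, one per frontal slice.

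First I would unpack the inner term. Substituting $\mathcal{A} = \mathcal{M} *_c \mathcal{N}$ and using associativity of $*_c$ gives $\mathcal{M}^H *_c \mathcal{A} *_c \mathcal{N}^H = (\mathcal{M}^H *_c \mathcal{M}) *_c (\mathcal{N} *_c \mathcal{N}^H)$. Applying $L$ to each factor, the $i$-th frontal slice of the left member is $\mathbf{M}_i^H \mathbf{M}_i \cdot \mathbf{N}_i \mathbf{N}_i^H$. By the hypothesis of the C-full rank decomposition, $\mathbf{M}_i \in \CC^{n_1 \times r}_r$ has full column rank and $\mathbf{N}_i \in \CC^{r \times n_2}_r$ has full row rank, so both $\mathbf{M}_i^H \mathbf{M}_i$ and $\mathbf{N}_i \mathbf{N}_i^H$ are invertible $r\times r$ matrices. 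Hence every frontal slice of $L(\mathcal{M}^H *_c \mathcal{A} *_c \mathcal{N}^H)$ is invertible, which by Definition \ref{d9} (applied via the face-wise characterization of $*_c$) means the tensor $\mathcal{M}^H *_c \mathcal{A} *_c \mathcal{N}^H$ is invertible with
\[
(\mathcal{M}^H *_c \mathcal{A} *_c \mathcal{N}^H)^{-1} = (\mathcal{N} *_c \mathcal{N}^H)^{-1} *_c (\mathcal{M}^H *_c \mathcal{M})^{-1}.
\]

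Next, I would substitute this back into $\mathcal{X}$ to obtain $\mathcal{X} = \mathcal{N}^H *_c (\mathcal{N} *_c \mathcal{N}^H)^{-1} *_c (\mathcal{M}^H *_c \mathcal{M})^{-1} *_c \mathcal{M}^H$. Now the four Moore-Penrose equations for the pair $(\mathcal{A}, \mathcal{X})$ translate, via $L$ and the face-wise product, into the four matrix Moore-Penrose equations for the pair $(\widehat{\mathcal{A}}^{(i)}, \widehat{\mathcal{X}}^{(i)})$ with
\[
\widehat{\mathcal{A}}^{(i)} = \mathbf{M}_i \mathbf{N}_i, \qquad \widehat{\mathcal{X}}^{(i)} = \mathbf{N}_i^H (\mathbf{N}_i \mathbf{N}_i^H)^{-1} (\mathbf{M}_i^H \mathbf{M}_i)^{-1} \mathbf{M}_i^H,
\]
for each $i = 1, \dots, n_3$. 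But this is exactly the classical full-rank-factorization formula for the Moore-Penrose inverse of a matrix, so each matrix identity is standard; since $L$ is an invertible linear transform, the tensor identities follow. By the uniqueness part of Theorem \ref{t2}, $\mathcal{X} = \mathcal{A}^\dag$.

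The only non-routine point is the invertibility step: the formula contains an inverse tensor, and one must check that it exists. This is where the full-column-rank / full-row-rank hypothesis on the $\mathbf{M}_i$ and $\mathbf{N}_i$ in the definition of the C-full rank decomposition is used in an essential way. Once that is in hand, the remainder is a mechanical face-wise reduction to the well-known matrix identity, so no further calculation is required.
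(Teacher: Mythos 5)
Your proof is correct, and it takes a somewhat different route from the paper's. The paper works with the same candidate $\mathcal{X}=\mathcal{N}^H*_c(\mathcal{M}^H*_c\mathcal{A}*_c\mathcal{N}^H)^{-1}*_c\mathcal{M}^H$ but verifies the four Penrose equations directly in tensor form: it substitutes $\mathcal{A}=\mathcal{M}*_c\mathcal{N}$, rewrites $(\mathcal{M}^H*_c\mathcal{M}*_c\mathcal{N}*_c\mathcal{N}^H)^{-1}$ as $(\mathcal{N}*_c\mathcal{N}^H)^{-1}*_c(\mathcal{M}^H*_c\mathcal{M})^{-1}$, and simplifies each of the four expressions by hand using associativity and the anti-homomorphism property of $(\cdot)^H$. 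You instead push everything through $L$, reduce each Penrose equation to its matrix counterpart on the frontal slices $\widehat{\mathcal{A}}^{(i)}=\mathbf{M}_i\mathbf{N}_i$, and quote the classical full-rank-factorization formula $\mathbf{A}^\dag=\mathbf{N}^H(\mathbf{N}\mathbf{N}^H)^{-1}(\mathbf{M}^H\mathbf{M})^{-1}\mathbf{M}^H$; invertibility of $L$ and uniqueness (Theorem \ref{t2}) then give the tensor statement. Your route buys two things: the verification is off-the-shelf rather than a page of $*_c$-manipulation, and, more importantly, you explicitly establish that $\mathcal{M}^H*_c\mathcal{A}*_c\mathcal{N}^H$ (and likewise $\mathcal{M}^H*_c\mathcal{M}$ and $\mathcal{N}*_c\mathcal{N}^H$) is invertible, using the full column/row rank of the slices $\mathbf{M}_i$, $\mathbf{N}_i$ --- a point the paper's computation uses tacitly without justification. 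What the paper's direct computation buys is independence from the slice-wise machinery: it only uses associativity, the reverse-order law for $(\cdot)^H$, and the two-sided inverse, so it would transfer verbatim to any product with those properties without invoking the known matrix theorem. This mirrors a split visible elsewhere in the paper, which alternates between direct tensor verifications and $\DCT$/slice-wise arguments; your proof is of the second kind.
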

\begin{proof}
We will check that $\mathcal{N}^H*_c(\mathcal{M}^H*_c\mathcal{A}*_c\mathcal{N}^H)^{-1}*_c\mathcal{M}^H$ holds for the four equations of $(\ref{mp})$. Let $\mathcal{X}=\mathcal{N}^H*_c(\mathcal{M}^H*_c\mathcal{A}*_c\mathcal{N}^H)^{-1}*_c\mathcal{M}^H$. Then, we have
\begin{eqnarray*}
  \mathcal{A}*_c\mathcal{X}*_c\mathcal{A}&=&\mathcal{M}*_c\mathcal{N}*_c
  \mathcal{N}^H*_c(\mathcal{M}^H*_c\mathcal{A}*_c\mathcal{N}^H)^{-1}*_c\mathcal{M}^H*_c\mathcal{M}*_c\mathcal{N}\\
  &=&\mathcal{M}*_c\mathcal{N}*_c
\mathcal{N}^H*_c(\mathcal{M}^H*_c\mathcal{M}*_c\mathcal{N}*_c\mathcal{N}^H)^{-1}*_c\mathcal{M}^H*_c\mathcal{M}*_c\mathcal{N}\\
&=&\mathcal{M}*_c\mathcal{N}*_c
\mathcal{N}^H*_c(\mathcal{N}*_c\mathcal{N}^H)^{-1}*_c(\mathcal{M}^H*_c\mathcal{M})^{-1}*_c\mathcal{M}^H*_c\mathcal{M}*_c\mathcal{N}\\
&=&\mathcal{M}*_c\mathcal{N}=\mathcal{A},
  \end{eqnarray*}
\begin{eqnarray*}
  \mathcal{X}*_c\mathcal{A}*_c\mathcal{X}&=&
\mathcal{N}^H*_c(\mathcal{M}^H*_c\mathcal{A}*_c\mathcal{N}^H)^{-1}*_c\mathcal{M}^H*_c
\mathcal{A}*_c
\mathcal{N}^H*_c(\mathcal{M}^H*_c\mathcal{A}*_c\mathcal{N}^H)^{-1}*_c\mathcal{M}^H\\
&=& \mathcal{N}^H*_c(\mathcal{M}^H*_c\mathcal{A}*_c\mathcal{N}^H)^{-1}\mathcal{M}^H
 =\mathcal{X},
\end{eqnarray*}
\begin{eqnarray*}
  (\mathcal{A}*_c\mathcal{X})^H&=&[\mathcal{M}*_c\mathcal{N}*_c
  \mathcal{N}^H*_c(\mathcal{M}^H*_c\mathcal{A}*_c\mathcal{N}^H)^{-1}*_c\mathcal{M}^H]^H\\
  &=&[\mathcal{M}*_c\mathcal{N}*_c
\mathcal{N}^H*_c(\mathcal{N}*_c\mathcal{N}^H)^{-1}*_c(\mathcal{M}^H*_c\mathcal{M})^{-1}*_c\mathcal{M}^H]^H\\
  &=&\mathcal{M}*_c(\mathcal{M}^H*_c\mathcal{M})^{-1}*_c\mathcal{M}^H\\
 &=&\mathcal{M}*_c\mathcal{N}*_c
\mathcal{N}^H*_c(\mathcal{N}*_c\mathcal{N}^H)^{-1}*_c(\mathcal{M}^H*_c\mathcal{M})^{-1}*_c\mathcal{M}^H\\
&=&\mathcal{M}*_c\mathcal{N}*_c
\mathcal{N}^H*_c(\mathcal{M}^H*_c\mathcal{A}*_c\mathcal{N}^H)^{-1}*_c\mathcal{M}^H
  =\mathcal{A}*_c\mathcal{X}
\end{eqnarray*}
and
\begin{eqnarray*}
  (\mathcal{X}*_c\mathcal{A})^H&=&[
  \mathcal{N}^H*_c(\mathcal{M}^H*_c\mathcal{A}*_c\mathcal{N}^H)^{-1}*_c\mathcal{M}^H*_c\mathcal{M}*_c\mathcal{N}]^H\\
  &=&[
\mathcal{N}^H*_c(\mathcal{N}*_c\mathcal{N}^H)^{-1}*_c(\mathcal{M}^H*_c\mathcal{M})^{-1}*_c\mathcal{M}^H
*_c\mathcal{M}*_c\mathcal{N}]^H\\
  &=&\mathcal{N}^H*_c(\mathcal{N}*_c\mathcal{N}^H)^{-1}*_c\mathcal{N}\\
 &=&\mathcal{N}^H*_c(\mathcal{N}*_c\mathcal{N}^H)^{-1}*_c(\mathcal{M}^H*_c\mathcal{M})^{-1}*_c
 \mathcal{M}^H*_c\mathcal{M}*_c\mathcal{N}\\
&=&\mathcal{N}^H*_c(\mathcal{M}^H*_c\mathcal{A}*_c\mathcal{N}^H)^{-1}*_c\mathcal{M}^H*_c\mathcal{M}*_c\mathcal{N}
 =\mathcal{X}*_c\mathcal{A}.
\end{eqnarray*}
\end{proof}

%\begin{theorem}
%Let $\mathcal{A}\in\CC^{n\times n\times n_3}$. Then there exist an
%F-lower  tensor $\mathcal{L}\in\CC^{n\times n\times n_3}$ and an F-upper   tensor $\mathcal{U}\in\CC^{n\times n\times n_3}$
%such that
%\begin{equation*}
%  \mathcal{A}=\mathcal{L}*_c\mathcal{U},
%\end{equation*}
%which is called the C-LU decomposition of $\mathcal{A}$.
%\end{theorem}

%\begin{proof}
%Let $\widehat{\mathcal{A}}=L(\mathcal{A})$, $\widehat{\mathcal{L}}=L(\mathcal{L})$ and $\widehat{\mathcal{U}}=L(\mathcal{U})$. Suppose $\widehat{\mathcal{A}}^{(i)}=\mathbf{L}_i\mathbf{U}_i$, $i=1,2,...,n_3$, are the LU decomposition of $\widehat{\mathcal{A}}^{(i)}$. Since $\mathbf{L}_i$ are lower triangular matrices, one has $\widehat{\mathcal{L}}^{(i)}$ are lower triangular  matrices. Also, $\mathbf{U}_i$ are upper triangular  matrices and so are $\widehat{\mathcal{U}}^{(i)}$. This implies $\mathcal{L}$ is an F-lower tensor and $\mathcal{U}$ is an F-upper tensor.
%\end{proof}

%\begin{theorem}
%Let $\mathcal{A}\in\CC^{n\times n\times n_3}$ and $\mathcal{A}=\mathcal{L}*_c\mathcal{U}$ be the C-LU decomposition of $\mathcal{A}$.
%Then,
%\begin{equation*}
%  \mathcal{A}^\dag=\mathcal{U}^\dag*_c\mathcal{L}^\dag
%  =\mathcal{U}^H*_c(\mathcal{U}*_c\mathcal{U}^H)^{-1}*_c(\mathcal{L}^H*_c\mathcal{L})^{-1}*_c\mathcal{L}^H.
%\end{equation*}
%\end{theorem}
%\begin{proof}
%It is easy to check that $\mathcal{U}^H*_c(\mathcal{U}*_c\mathcal{U}^H)^{-1}*_c(\mathcal{L}^H*_c\mathcal{L})^{-1}*_c\mathcal{L}^H$ holds for the four equations of $(\ref{mp})$.
%\end{proof}

\begin{definition}
Let $\mathcal{A}\in\CC^{n_1\times n_2\times n_3}$. If $\mathcal{A}$ can be decomposed as
\begin{equation*}
  \mathcal{A}=\mathcal{Q}*_c\mathcal{D}*_c\mathcal{R},
\end{equation*}
where $$\mathcal{Q}=\ten(\IDCT(\left[
   \begin{array}{ccc}
     \mathbf{Q}_1 &  &    \\
      &   \ddots &  \\
       &  & \mathbf{Q}_{n_3} \\
   \end{array}
 \right]))\in\CC^{n_1\times r\times n_3}, \ \mathbf{Q}_i\in\CC^{n_1\times r}_r, \ i=1,2,...,n_3,$$
$\mathcal{D}\in\CC^{r\times r\times n_3}$ is an invertible F-diagonal tensor and $$\mathcal{R}=\ten(\IDCT(\left[
   \begin{array}{ccc}
     \mathbf{R}_1 &  &    \\
      &   \ddots &  \\
       &  & \mathbf{R}_{n_3} \\
   \end{array}
 \right]))\in\CC^{r\times n_2\times n_3}, \ \mathbf{R}_i\in\CC^{r\times n_2}_r, \ i=1,2,...,n_3$$ is an F-upper tensor,
then we call this decomposition the C-QDR decomposition of $\mathcal{A}$.
\end{definition}
{\bf Note:}
Let $\widehat{\mathcal{A}}=L(\mathcal{A})$, $\widehat{\mathcal{Q}}=L(\mathcal{Q})$, $\widehat{\mathcal{D}}=L(\mathcal{D})$ and $\widehat{\mathcal{R}}=L(\mathcal{R})$. Suppose $$\widehat{\mathcal{A}}^{(i)}=\mathbf{Q}_i\mathbf{D}_i\mathbf{R}_i
=\widehat{\mathcal{Q}}^{(i)}\widehat{\mathcal{D}}^{(i)}\widehat{\mathcal{R}}^{(i)}, \ \mathbf{Q}_i\in\CC^{n_1\times r}_r, \ \mathbf{D}_i\in\CC^{r\times r}_r,  \ \mathbf{R}_i\in\CC^{r\times n_2}_r, \ i=1,2,...,n_3,$$ are the QDR decomposition of $\widehat{\mathcal{A}}^{(i)}$ \cite{SPKS}. We deduce when  $rank(\widehat{\mathcal{A}}^{(i)})=r$, $i=1,2,...,n_3$, one has the decomposition the definition  established. Since $\mathbf{D}_i$ are invertible diagonal matrices, we have $\widehat{\mathcal{D}}^{(i)}$ also are invertible diagonal matrices. Meanwhile, $\mathbf{R}_i$ are upper triangular matrices and so are $\widehat{\mathcal{R}}^{(i)}$. This implies $\mathcal{D}$ is an invertible F-diagonal tensor and $\mathcal{R}$ is an F-upper tensor. $\Box$

\begin{theorem}
Let $\mathcal{A}\in\CC^{n_1\times n_2\times n_3}$. Suppose $\mathcal{A}^H$ has the C-QDR decomposition $\mathcal{A}^H=\mathcal{Q}*_c\mathcal{D}*_c\mathcal{R}$.
Then,
\begin{equation*}
  \mathcal{A}^\dag=\mathcal{Q}*_c(\mathcal{R}*_c\mathcal{A}*_c\mathcal{Q})^{-1}*_c\mathcal{R}.
\end{equation*}
\end{theorem}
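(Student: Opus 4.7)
The plan is to reduce the C-QDR case to the already-proved C-full rank case rather than grinding through the four Moore--Penrose identities a second time.

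First I would take conjugate transposes of the hypothesis $\mathcal{A}^H=\mathcal{Q}*_c\mathcal{D}*_c\mathcal{R}$ to obtain $\mathcal{A}=\mathcal{R}^H*_c\mathcal{D}^H*_c\mathcal{Q}^H$. By the Note following the C-full rank definition, since every frontal slice of $L(\mathcal{Q})$ lies in $\CC^{n_1\times r}_r$ and every frontal slice of $L(\mathcal{R})$ lies in $\CC^{r\times n_2}_r$, the slices of $L(\mathcal{R}^H)$ are full column rank and the slices of $L(\mathcal{D}^H*_c\mathcal{Q}^H)$ are full row rank (invertibility of $\mathcal{D}$ preserves this). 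Setting $\mathcal{M}=\mathcal{R}^H$ and $\mathcal{N}=\mathcal{D}^H*_c\mathcal{Q}^H$, this identifies $\mathcal{A}=\mathcal{M}*_c\mathcal{N}$ as a bona fide C-full rank decomposition of $\mathcal{A}$, so the preceding theorem applies:
\begin{equation*}
\mathcal{A}^\dag=\mathcal{N}^H*_c(\mathcal{M}^H*_c\mathcal{A}*_c\mathcal{N}^H)^{-1}*_c\mathcal{M}^H.
\end{equation*}

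Next I would substitute $\mathcal{N}^H=\mathcal{Q}*_c\mathcal{D}$ and $\mathcal{M}^H=\mathcal{R}$ into this expression to get
\begin{equation*}
\mathcal{A}^\dag=\mathcal{Q}*_c\mathcal{D}*_c(\mathcal{R}*_c\mathcal{A}*_c\mathcal{Q}*_c\mathcal{D})^{-1}*_c\mathcal{R},
\end{equation*}
and then peel off the $\mathcal{D}$'s using the invertibility of $\mathcal{D}$ (note $\mathcal{D}^{-1}$ exists since every $L(\mathcal{D})^{(i)}$ is an invertible diagonal matrix). Concretely, $(\mathcal{R}*_c\mathcal{A}*_c\mathcal{Q}*_c\mathcal{D})^{-1}=\mathcal{D}^{-1}*_c(\mathcal{R}*_c\mathcal{A}*_c\mathcal{Q})^{-1}$, and the factor $\mathcal{D}*_c\mathcal{D}^{-1}$ collapses to $\mathcal{I}$, leaving exactly $\mathcal{Q}*_c(\mathcal{R}*_c\mathcal{A}*_c\mathcal{Q})^{-1}*_c\mathcal{R}$.

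The step that requires a little care, and is the only real obstacle, is justifying that $\mathcal{R}*_c\mathcal{A}*_c\mathcal{Q}$ is invertible so that the stated formula even makes sense. I would handle this by passing through $L$: applying Lemma~\ref{ll2} to $\mathcal{R}*_c\mathcal{A}*_c\mathcal{Q}$ diagonalizes it blockwise into $\mathbf{R}_i\mathbf{A}_i^\star\mathbf{Q}_i$ where $\mathbf{A}_i^\star=L(\mathcal{A})^{(i)}=\mathbf{R}_i^H\mathbf{D}_i^H\mathbf{Q}_i^H$, so each block equals $(\mathbf{R}_i\mathbf{R}_i^H)\mathbf{D}_i^H(\mathbf{Q}_i^H\mathbf{Q}_i)$, a product of three $r\times r$ invertible matrices by the full-rank property of $\mathbf{R}_i$, $\mathbf{Q}_i$ together with invertibility of $\mathbf{D}_i$. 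Hence every diagonal block of $\DCT(\matt(\mathcal{R}*_c\mathcal{A}*_c\mathcal{Q}))$ is invertible, which by Definition~\ref{d9} gives the existence of $(\mathcal{R}*_c\mathcal{A}*_c\mathcal{Q})^{-1}$. Having this in hand, the chain of equalities above yields the desired formula, and uniqueness of the Moore--Penrose inverse (Theorem~\ref{t2}) finishes the argument.
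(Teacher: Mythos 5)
Your proof is correct, but it takes a different route from the paper. The paper proves this theorem by writing $\mathcal{A}=\mathcal{R}^H*_c\mathcal{D}^H*_c\mathcal{Q}^H$ and verifying the four Penrose equations of $(\ref{mp})$ directly for $\mathcal{X}=\mathcal{Q}*_c(\mathcal{R}*_c\mathcal{A}*_c\mathcal{Q})^{-1}*_c\mathcal{R}$, repeatedly using the factorization $(\mathcal{R}*_c\mathcal{A}*_c\mathcal{Q})^{-1}=(\mathcal{Q}^H*_c\mathcal{Q})^{-1}*_c(\mathcal{D}^H)^{-1}*_c(\mathcal{R}*_c\mathcal{R}^H)^{-1}$; the invertibility of $\mathcal{Q}^H*_c\mathcal{Q}$, $\mathcal{D}^H$, $\mathcal{R}*_c\mathcal{R}^H$ and of $\mathcal{R}*_c\mathcal{A}*_c\mathcal{Q}$ itself is used implicitly rather than argued. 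You instead observe that $\mathcal{A}=\mathcal{R}^H*_c(\mathcal{D}^H*_c\mathcal{Q}^H)$ is a C-full rank decomposition (the slices of $L(\mathcal{R}^H)$ have full column rank, those of $L(\mathcal{D}^H*_c\mathcal{Q}^H)$ full row rank), invoke the already-proved full-rank formula $\mathcal{A}^\dag=\mathcal{N}^H*_c(\mathcal{M}^H*_c\mathcal{A}*_c\mathcal{N}^H)^{-1}*_c\mathcal{M}^H$, and cancel the $\mathcal{D}$ factors. This buys you a much shorter argument with no re-verification of the Penrose identities, and you additionally supply something the paper omits: an explicit slicewise proof via Lemma~\ref{ll2} that $\mathcal{R}*_c\mathcal{A}*_c\mathcal{Q}$ is invertible, since each block $L(\mathcal{R}*_c\mathcal{A}*_c\mathcal{Q})^{(i)}=(\mathbf{R}_i\mathbf{R}_i^H)\mathbf{D}_i^H(\mathbf{Q}_i^H\mathbf{Q}_i)$ is a product of invertible $r\times r$ matrices. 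What the paper's direct verification buys, by contrast, is independence from the full-rank theorem (it stands on its own even for a reader who skipped that result) and it makes visible exactly which Penrose equations each structural assumption feeds into; your reduction inherits whatever implicit invertibility assumptions the full-rank theorem carries, though these are satisfied here by the rank conditions, so there is no gap.
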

\begin{proof}
Let $\mathcal{X}=\mathcal{Q}*_c(\mathcal{R}*_c\mathcal{A}*_c\mathcal{Q})^{-1}*_c\mathcal{R}$. Thus, one has
\begin{eqnarray*}
  \mathcal{A}*_c\mathcal{X}*_c\mathcal{A}&=&\mathcal{R}^H*_c\mathcal{D}^H*_c\mathcal{Q}^H*_c
  \mathcal{Q}*_c(\mathcal{R}*_c\mathcal{A}*_c\mathcal{Q})^{-1}*_c\mathcal{R}*_c
 \mathcal{R}^H*_c\mathcal{D}^H*_c\mathcal{Q}^H\\
&=&\mathcal{R}^H*_c\mathcal{D}^H*_c\mathcal{Q}^H*_c
  \mathcal{Q}*_c(\mathcal{R}*_c\mathcal{R}^H*_c\mathcal{D}^H*_c\mathcal{Q}^H*_c\mathcal{Q})^{-1}*_c\mathcal{R}*_c
 \mathcal{R}^H*_c\mathcal{D}^H*_c\mathcal{Q}^H \\
&=&\mathcal{R}^H*_c\mathcal{D}^H*_c\mathcal{Q}^H=\mathcal{A},
  \end{eqnarray*}
\begin{eqnarray*}
  \mathcal{X}*_c\mathcal{A}*_c\mathcal{X}&=&
  \mathcal{Q}*_c(\mathcal{R}*_c\mathcal{A}*_c\mathcal{Q})^{-1}*_c\mathcal{R}*_c
  \mathcal{A}*_c
  \mathcal{Q}*_c(\mathcal{R}*_c\mathcal{A}*_c\mathcal{Q})^{-1}*_c\mathcal{R}
 \\
&=&\mathcal{Q}*_c(\mathcal{R}*_c\mathcal{A}*_c\mathcal{Q})^{-1}*_c\mathcal{R}=\mathcal{X},
  \end{eqnarray*}
\begin{eqnarray*}
  (\mathcal{A}*_c\mathcal{X})^H&=&[\mathcal{R}^H*_c\mathcal{D}^H*_c\mathcal{Q}^H*_c
  \mathcal{Q}*_c(\mathcal{R}*_c\mathcal{A}*_c\mathcal{Q})^{-1}*_c\mathcal{R}]^H\\
  &=&[\mathcal{R}^H*_c\mathcal{D}^H*_c\mathcal{Q}^H*_c
  \mathcal{Q}*_c(\mathcal{Q}^H*_c
  \mathcal{Q})^{-1}*_c(\mathcal{D}^H)^{-1}*_c(\mathcal{R}*_c
  \mathcal{R}^H)^{-1}*_c\mathcal{R}]^H\\
  &=&\mathcal{R}^H*_c(\mathcal{R}*_c\mathcal{R}^H)^{-1}*_c\mathcal{R}\\
 &=&\mathcal{R}^H*_c\mathcal{D}^H*_c\mathcal{Q}^H*_c\mathcal{Q}*_c(\mathcal{Q}^H*_c
  \mathcal{Q})^{-1}*_c(\mathcal{D}^H)^{-1}*_c(\mathcal{R}*_c
  \mathcal{R}^H)^{-1}*_c\mathcal{R}\\
&=&\mathcal{R}^H*_c\mathcal{D}^H*_c\mathcal{Q}^H*_c
  \mathcal{Q}*_c(\mathcal{R}*_c\mathcal{A}*_c\mathcal{Q})^{-1}*_c\mathcal{R}
  =\mathcal{A}*_c\mathcal{X}
\end{eqnarray*}
and
\begin{eqnarray*}
  (\mathcal{X}*_c\mathcal{A})^H&=&[\mathcal{Q}*_c(\mathcal{R}*_c\mathcal{A}*_c\mathcal{Q})^{-1}*_c\mathcal{R}*_c
\mathcal{R}^H*_c\mathcal{D}^H*_c\mathcal{Q}^H]^H\\
  &=&[\mathcal{Q}*_c(\mathcal{Q}^H*_c
  \mathcal{Q})^{-1}*_c(\mathcal{D}^H)^{-1}*_c(\mathcal{R}*_c
  \mathcal{R}^H)^{-1}*_c\mathcal{R}*_c\mathcal{R}^H*_c\mathcal{D}^H*_c\mathcal{Q}^H
]^H\\
  &=&\mathcal{Q}*_c(\mathcal{Q}^H*_c
  \mathcal{Q})^{-1}*_c\mathcal{Q}^H\\
 &=&\mathcal{Q}*_c(\mathcal{Q}^H*_c
  \mathcal{Q})^{-1}*_c(\mathcal{D}^H)^{-1}*_c(\mathcal{R}*_c
  \mathcal{R}^H)^{-1}*_c\mathcal{R}*_c\mathcal{R}^H*_c\mathcal{D}^H*_c\mathcal{Q}^H\\
&=&\mathcal{Q}*_c(\mathcal{R}*_c\mathcal{A}*_c\mathcal{Q})^{-1}*_c\mathcal{R}*_c
\mathcal{R}^H*_c\mathcal{D}^H*_c\mathcal{Q}^H=\mathcal{X}*_c\mathcal{A}.
\end{eqnarray*}
Therefore, $\mathcal{X}=\mathcal{A}^\dag$.
\end{proof}

For a tensor $\mathcal{A}\in\CC^{n_1\times n_2\times n_3}$, which the block form is
$$\mathcal{A}=\left[
\begin{array}{cc}
\mathcal{A}_1 & \mathcal{A}_2 \\
\mathcal{A}_3 & \mathcal{A}_4 \\
\end{array}
\right],$$
\text{where}  $\mathcal{A}_1\in\CC^{s\times t\times n_3}, \mathcal{A}_2\in\CC^{s\times (n_2-t)\times n_3}, \mathcal{A}_3\in\CC^{(n_1-s)\times t\times n_3},
\mathcal{A}_4\in\CC^{(n_1-s)\times(n_2-t)\times n_3}$. Let
$$\mathcal{B}=\left[
\begin{array}{cc}
\mathcal{B}_1 & \mathcal{B}_2 \\
\mathcal{B}_3 & \mathcal{B}_4 \\
\end{array}
\right]\in\CC^{n_2\times n_4\times n_3},$$
\text{where}  $\mathcal{B}_1\in\CC^{t\times k\times n_3}, \mathcal{B}_2\in\CC^{t\times (n_4-k)\times n_3}, \mathcal{B}_3\in\CC^{(n_2-t)\times k\times n_3},
\mathcal{B}_4\in\CC^{(n_2-t)\times(n_4-k)\times n_3}$. It is easy to check that
$$\mathcal{A}*_c\mathcal{B}=\left[
\begin{array}{cc}
\mathcal{A}_1 & \mathcal{A}_2 \\
\mathcal{A}_3 & \mathcal{A}_4 \\
\end{array}
\right]*_c\left[
\begin{array}{cc}
\mathcal{B}_1 & \mathcal{B}_2 \\
\mathcal{B}_3 & \mathcal{B}_4 \\
\end{array}
\right]=\left[
\begin{array}{cc}
\mathcal{A}_1*_c\mathcal{B}_1+\mathcal{A}_2*_c\mathcal{B}_3
& \mathcal{A}_1*_c\mathcal{B}_2+\mathcal{A}_2*_c\mathcal{B}_4 \\
\mathcal{A}_3*_c\mathcal{B}_1+\mathcal{A}_4*_c\mathcal{B}_3
& \mathcal{A}_3*_c\mathcal{B}_2+\mathcal{A}_4*_c\mathcal{B}_4 \\
\end{array}
\right].$$

Suppose $\mathcal{A}\in\CC^{n_1\times n_1\times n_3}$ and
$\mathcal{A}=\mathcal{U}*_c\mathcal{S}*_c\mathcal{V}^H$ is the C-SVD of $\mathcal{A}$. When  $$rank\left(L(\mathcal{S})^{(1)}\right)=rank\left(L(\mathcal{S})^{(2)}\right)=\cdots=rank\left(L(\mathcal{S})^{(n_3)}\right)=r,$$ the decomposition of $\mathcal{A}$ can be written as
$$\mathcal{A}=\mathcal{U}*_c\left[
\begin{array}{cc}
\mathcal{S}_r & \mathcal{O} \\
\mathcal{O} & \mathcal{O} \\
\end{array}
\right]
*_c\mathcal{V}^H,$$
where $\mathcal{S}_r\in\CC^{r\times r\times n_3}$, $\mathcal{U}\in\CC^{n_1\times n_1\times n_3}$, $\mathcal{V}\in\CC^{n_1\times n_1\times n_3}$. Let
$$\mathcal{V}^H*_c\mathcal{U}=\left[
\begin{array}{cc}
\mathcal{K} & \mathcal{L} \\
\mathcal{M} & \mathcal{N} \\
\end{array}
\right], \ \text{where} \ \mathcal{K}\in\CC^{r\times r\times n_3}.$$
Thus, we have
$$\mathcal{A}=\mathcal{U}*_c\left[
\begin{array}{cc}
\mathcal{S}_r & \mathcal{O} \\
\mathcal{O} & \mathcal{O} \\
\end{array}
\right]
*_c\mathcal{V}^H=\mathcal{U}*_c\left[
\begin{array}{cc}
\mathcal{S}_r & \mathcal{O} \\
\mathcal{O} & \mathcal{O} \\
\end{array}
\right]
*_c\left[
\begin{array}{cc}
\mathcal{K} & \mathcal{L} \\
\mathcal{M} & \mathcal{N} \\
\end{array}
\right]*_c\mathcal{U}^H=\mathcal{U}*_c\left[
\begin{array}{cc}
\mathcal{S}_r*_c\mathcal{K} & \mathcal{S}_r*_c\mathcal{L} \\
\mathcal{O} & \mathcal{O} \\
\end{array}
\right]*_c\mathcal{U}^H.$$
Since $\mathcal{V}^H*_c\mathcal{U}$ is unitary, one can arrive $\mathcal{K}*_c\mathcal{K}^H+\mathcal{L}*_c\mathcal{L}^H=\mathcal{I}_r$, where $\mathcal{I}_r\in\CC^{r\times r\times n_3}$. We call this decomposition the C-HS decomposition of $\mathcal{A}$.

%\begin{definition}
%Suppose $\mathcal{A}\in\CC^{n_1\times n_2\times n_3}$ and
%$\mathcal{A}=\mathcal{U}*_c\mathcal{S}*_c\mathcal{V}^H=\mathcal{U}*_c\left[
%\begin{array}{cc}
%\Sigma_r & \mathcal{O} \\
%\mathcal{O} & \mathcal{O} \\
%\end{array}
%\right]
%*_c\mathcal{V}^H$, where $\Sigma_r\in\CC^{r\times r\times n_3}$, $r=\max\{rank(\mathcal{S}^{(j)}),j=1,2,...,n_3\}$ is the C-SVD of $\mathcal{A}$. Then, $\mathcal{A}$ can be decomposed as
%\begin{equation} \label{a}
%\ac = \uc *_c\mat{{\Sigma}_r\mathcal{{K}}}{{\Sigma}_r\mathcal{{L}}}{O}{O}
%*_c\uc^H,
%\end{equation}
%where $\uc$ is unitary, $$\mathbf{\Sigma}_i=\mathcal{S}(1:r_i,1:r_i,i)=diag(\sigma^{(i)}_1\mathbf{I}_{s^{(i)}_1}, \  \sigma^{(i)}_2\mathbf{I}_{s^{(i)}_2},...,\sigma^{(i)}_t\mathbf{I}_{s^{(i)}_t}), \  s^{(i)}_1+s^{(i)}_2+\cdots+s^{(i)}_t=r_i, \ r_i=rank(\mathcal{S}(:,:,i)),$$
%and $\mathbf{K}_i\in\CC^{r_i\times r_i}$, $\mathbf{L}_i\in\CC^{r_i\times(n_2-r_i)}$ satisfy $\mathbf{K}_i\mathbf{K}_i^H+\mathbf{L}_i\mathbf{L}_i^H=\mathbf{I}_{r_i}$. We call this decomposition the C-HS decomposition of $\mathcal{A}$.
%\end{definition}

\begin{theorem}
Let $\mathcal{A}\in\CC^{n_1\times n_1\times n_3}$. Suppose $\mathcal{A}$ has the C-HS decomposition.
Then,
\begin{equation} \label{a00}
\ac^\dag = \uc *_c
\mat{\mathcal{K}^H*_c\mathcal{S}_r^{-1}}{\mathcal{O}}{\mathcal{L}^H*_c\mathcal{S}_r^{-1}}{\mathcal{O}}*_c\uc^H.
\end{equation}
\end{theorem}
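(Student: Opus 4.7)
The plan is to verify the four Moore-Penrose equations directly by exploiting the unitarity of $\mathcal{U}$ together with the block-form arithmetic of the C-product. Write
\[
\mathcal{A}=\mathcal{U}*_c\mathcal{T}*_c\mathcal{U}^H,\qquad
\mathcal{T}=\mat{\mathcal{S}_r*_c\mathcal{K}}{\mathcal{S}_r*_c\mathcal{L}}{\mathcal{O}}{\mathcal{O}},
\]
and denote the candidate inverse on the right of $(\ref{a00})$ as $\mathcal{X}=\mathcal{U}*_c\mathcal{Y}*_c\mathcal{U}^H$ with
\[
\mathcal{Y}=\mat{\mathcal{K}^H*_c\mathcal{S}_r^{-1}}{\mathcal{O}}{\mathcal{L}^H*_c\mathcal{S}_r^{-1}}{\mathcal{O}}.
\]
Since $\mathcal{U}*_c\mathcal{U}^H=\mathcal{I}$ and $(\mathcal{U}*_c\mathcal{Z}*_c\mathcal{U}^H)^H=\mathcal{U}*_c\mathcal{Z}^H*_c\mathcal{U}^H$, the four Moore-Penrose equations for $(\mathcal{A},\mathcal{X})$ reduce to the four equations $\mathcal{T}*_c\mathcal{Y}*_c\mathcal{T}=\mathcal{T}$, $\mathcal{Y}*_c\mathcal{T}*_c\mathcal{Y}=\mathcal{Y}$, $(\mathcal{T}*_c\mathcal{Y})^H=\mathcal{T}*_c\mathcal{Y}$, $(\mathcal{Y}*_c\mathcal{T})^H=\mathcal{Y}*_c\mathcal{T}$; this is a clean reduction that I would carry out first.

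Next I would compute $\mathcal{T}*_c\mathcal{Y}$ and $\mathcal{Y}*_c\mathcal{T}$ using the $2\times 2$ block multiplication rule for the C-product recorded just before the theorem. The key algebraic input is the identity
\[
\mathcal{K}*_c\mathcal{K}^H+\mathcal{L}*_c\mathcal{L}^H=\mathcal{I}_r,
\]
which is supplied explicitly in the paragraph that introduces the C-HS decomposition (it comes from the unitarity of $\mathcal{V}^H*_c\mathcal{U}$). Substituting this into the top-left block of $\mathcal{T}*_c\mathcal{Y}$ collapses it to $\mathcal{S}_r*_c(\mathcal{K}*_c\mathcal{K}^H+\mathcal{L}*_c\mathcal{L}^H)*_c\mathcal{S}_r^{-1}=\mathcal{I}_r$, giving
\[
\mathcal{T}*_c\mathcal{Y}=\mat{\mathcal{I}_r}{\mathcal{O}}{\mathcal{O}}{\mathcal{O}},
\]
which is manifestly Hermitian, settling the third equation. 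The fourth equation follows from the direct computation
\[
\mathcal{Y}*_c\mathcal{T}=\mat{\mathcal{K}^H*_c\mathcal{K}}{\mathcal{K}^H*_c\mathcal{L}}{\mathcal{L}^H*_c\mathcal{K}}{\mathcal{L}^H*_c\mathcal{L}},
\]
whose Hermitian symmetry is visible block by block.

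Finally I would check the two reproducing equations. Multiplying $\mathcal{T}*_c\mathcal{Y}$ by $\mathcal{T}$ on the right gives back $\mathcal{T}$ because only the top row of $\mathcal{T}$ is nonzero. For $\mathcal{Y}*_c\mathcal{T}*_c\mathcal{Y}$, the same identity $\mathcal{K}*_c\mathcal{K}^H+\mathcal{L}*_c\mathcal{L}^H=\mathcal{I}_r$ reappears after expanding: both nonzero rows of the product contain the common factor $(\mathcal{K}*_c\mathcal{K}^H+\mathcal{L}*_c\mathcal{L}^H)*_c\mathcal{S}_r^{-1}$, which simplifies to $\mathcal{S}_r^{-1}$ and reproduces $\mathcal{Y}$.

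The only substantive obstacle is the bookkeeping in these block C-products, together with ensuring that $\mathcal{S}_r^{-1}$ is meaningful; invertibility of $\mathcal{S}_r$ is guaranteed by the equal-rank hypothesis $\rk(\mathcal{S}^{(i)})=r$ used in setting up the C-HS decomposition, so $\mathcal{S}_r\in\CC^{r\times r\times n_3}$ is a tensor whose transform-domain slices are nonsingular diagonal matrices. Once that is in place, the verification is a short sequence of block multiplications driven entirely by the orthogonality relation $\mathcal{K}*_c\mathcal{K}^H+\mathcal{L}*_c\mathcal{L}^H=\mathcal{I}_r$, and invoking Theorem~\ref{t2} for uniqueness then identifies $\mathcal{X}$ with $\mathcal{A}^\dag$.
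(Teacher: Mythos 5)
Your proposal is correct and follows essentially the same route as the paper: a direct verification of the four Penrose equations via $2\times 2$ block C-product arithmetic, driven by the identity $\mathcal{K}*_c\mathcal{K}^H+\mathcal{L}*_c\mathcal{L}^H=\mathcal{I}_r$, with uniqueness identifying the candidate as $\mathcal{A}^\dag$. Your only departure is cosmetic---you strip off $\mathcal{U}$ and $\mathcal{U}^H$ first and check the equations for the middle blocks, whereas the paper carries the unitary factors through each computation.
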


\begin{proof}
Let $\mathcal{A}=\mathcal{U}*_c\left[
\begin{array}{cc}
\mathcal{S}_r*_c\mathcal{K} & \mathcal{S}_r*_c\mathcal{L} \\
\mathcal{O} & \mathcal{O} \\
\end{array}
\right]*_c\mathcal{U}^H$ and $\mathcal{X}=\uc *_c
\mat{\mathcal{K}^H*_c\mathcal{S}_r^{-1}}{\mathcal{O}}{\mathcal{L}^H*_c\mathcal{S}_r^{-1}}{\mathcal{O}}*_c\uc^H$. Then,
\begin{eqnarray*}
% \nonumber to remove numbering (before each equation)
  \mathcal{A}*_c\mathcal{X}*_c\mathcal{A}&=&\mathcal{U}*_c\left[
\begin{array}{cc}
\mathcal{S}_r*_c\mathcal{K} & \mathcal{S}_r*_c\mathcal{L} \\
\mathcal{O} & \mathcal{O} \\
\end{array}
\right]*_c\mathcal{U}^H*_c\uc *_c
\mat{\mathcal{K}^H*_c\mathcal{S}_r^{-1}}{\mathcal{O}}{\mathcal{L}^H*_c\mathcal{S}_r^{-1}}{\mathcal{O}}*_c\uc^H*_c\mathcal{U}*_c\left[
\begin{array}{cc}
\mathcal{S}_r*_c\mathcal{K} & \mathcal{S}_r*_c\mathcal{L} \\
\mathcal{O} & \mathcal{O} \\
\end{array}
\right]*_c\mathcal{U}^H \\
   &=&  \mathcal{U}*_c\left[
\begin{array}{cc}
\mathcal{S}_r*_c\mathcal{K} & \mathcal{S}_r*_c\mathcal{L} \\
\mathcal{O} & \mathcal{O} \\
\end{array}
\right]*_c\mathcal{U}^H=\mathcal{A}.
\end{eqnarray*}
\begin{eqnarray*}
% \nonumber to remove numbering (before each equation)
  \mathcal{X}*_c\mathcal{A}*_c\mathcal{X} &=& \uc *_c
\mat{\mathcal{K}^H*_c\mathcal{S}_r^{-1}}{\mathcal{O}}{\mathcal{L}^H*_c\mathcal{S}_r^{-1}}{\mathcal{O}}*_c\uc^H
*_c\mathcal{U}*_c\left[
\begin{array}{cc}
\mathcal{S}_r*_c\mathcal{K} & \mathcal{S}_r*_c\mathcal{L} \\
\mathcal{O} & \mathcal{O} \\
\end{array}
\right]*_c\mathcal{U}^H*_c
\uc *_c
\mat{\mathcal{K}^H*_c\mathcal{S}_r^{-1}}{\mathcal{O}}{\mathcal{L}^H*_c\mathcal{S}_r^{-1}}{\mathcal{O}}*_c\uc^H \\
   &=& \uc *_c
\mat{\mathcal{K}^H*_c\mathcal{S}_r^{-1}}{\mathcal{O}}{\mathcal{L}^H*_c\mathcal{S}_r^{-1}}{\mathcal{O}}*_c\uc^H=\mathcal{X}.
\end{eqnarray*}
\begin{eqnarray*}
% \nonumber to remove numbering (before each equation)
  \mathcal{A}*_c\mathcal{X}&=&\mathcal{U}*_c\left[
\begin{array}{cc}
\mathcal{S}_r*_c\mathcal{K} & \mathcal{S}_r*_c\mathcal{L} \\
\mathcal{O} & \mathcal{O} \\
\end{array}
\right]*_c\mathcal{U}^H*_c\uc *_c
\mat{\mathcal{K}^H*_c\mathcal{S}_r^{-1}}{\mathcal{O}}{\mathcal{L}^H*_c\mathcal{S}_r^{-1}}{\mathcal{O}}*_c\uc^H\\
   &=&  \mathcal{U}*_c\left[
\begin{array}{cc}
\mathcal{I}_r & \mathcal{O} \\
\mathcal{O} & \mathcal{O} \\
\end{array}
\right]*_c\mathcal{U}^H=(\mathcal{A}*_c\mathcal{X})^H.
\end{eqnarray*}
\begin{eqnarray*}
% \nonumber to remove numbering (before each equation)
  \mathcal{X}*_c\mathcal{A}&=&\uc *_c
\mat{\mathcal{K}^H*_c\mathcal{S}_r^{-1}}{\mathcal{O}}{\mathcal{L}^H*_c\mathcal{S}_r^{-1}}{\mathcal{O}}*_c\uc^H*_c\mathcal{U}*_c\left[
\begin{array}{cc}
\mathcal{S}_r*_c\mathcal{K} & \mathcal{S}_r*_c\mathcal{L} \\
\mathcal{O} & \mathcal{O} \\
\end{array}
\right]*_c\mathcal{U}^H\\
   &=&  \mathcal{U}*_c\left[
\begin{array}{cc}
\mathcal{K}^H*_c\mathcal{K} & \mathcal{K}^H*_c\mathcal{L} \\
\mathcal{L}^H*_c\mathcal{K} & \mathcal{L}^H*_c\mathcal{L} \\
\end{array}
\right]*_c\mathcal{U}^H=(\mathcal{X}*_c\mathcal{A})^H.
\end{eqnarray*}
Therefore, $\ac^\dag = \uc *_c
\mat{\mathcal{K}^H*_c\mathcal{S}_r^{-1}}{\mathcal{O}}
{\mathcal{L}^H*_c\mathcal{S}_r^{-1}}{\mathcal{O}}*_c\uc^H$.
\end{proof}

\subsection{The algorithm for computing the  Moore-Penrose inverse of a tensor}

In the following, we have Algorithm \ref{algo11} provided the procedure for the Moore-Penrose inverse operation.

\begin{algorithm}[H]
\caption{\textsc{Compute the Moore-Penrose inverse of a tensor $\mathcal{A}$}}\label{algo11}
\KwIn{$n_1\times n_2\times n_3$ tensor $\mathcal{A}$}
\KwOut{$n_2\times n_1\times n_3$ tensor $\mathcal{X}$}
\begin{enumerate}
\addtolength{\itemsep}{-0.8\parsep minus 0.8\parsep}

\item $\widehat{\mathcal{A}} = L(\mathcal{A})=\mathcal{A}\times_3\mathbf{M}$, where $\mathbf{M}$ is defined in (\ref{l201})

\item  for $i=1,\ldots, n_3$

\qquad  $\widehat{\mathcal{X}}^{(i)}=\pinv(\widehat{\mathcal{A}}^{(i)})$; where
$\pinv(\widehat{\mathcal{A}}^{(i)})$ is the Moore-Penrose inverse of $\widehat{\mathcal{A}}^{(i)}$

end

\item  ${\mathcal{X}} = L^{-1}(\widehat{\mathcal{X}})=\widehat{\mathcal{X}}\times_3\mathbf{M}^{-1}$
\end{enumerate}
\end{algorithm}

\begin{example}
Let $\ac \in \CC^{3\times 3\times 4}$ with frontal slices
$$\ac^{(1)}=\begin{bmatrix}
    1  & 0 & 0\\
    0  & 1 & 0\\
    0  & 0 & 3\\
  \end{bmatrix},\quad
\ac^{(2)}=\begin{bmatrix}
    2  & 3 & 0\\
    2  & 0 & 0\\
    1  & 0 & 5\\
  \end{bmatrix},\quad
\ac^{(3)}=\begin{bmatrix}
    3  & 1 & 0\\
    0  & 2 & 3\\
    4  & 0 & 0\\
  \end{bmatrix},
  \quad
\ac^{(4)}=\begin{bmatrix}
    3  & 1 & 4\\
    0  & 2 & 2\\
    1  & 0 & 2\\
  \end{bmatrix}.$$
Then, by using Algorithm \ref{algo11}, we have
\begin{eqnarray*}
(\ac^\dag)^{(1)}=\begin{bmatrix}
    1.6666  & 1.3333  & 9.7778\\
    1.3333  & 1  & 7.5556\\
    0     & 0     & -0.3333\\
  \end{bmatrix},\quad
(\ac^\dag)^{(2)}=\begin{bmatrix}
    -1.2722  & -1.0482 & -8.2780\\
    -1.2295  & -0.7384 & -6.2015\\
    0.1057  & -0.0651  & 0.2724\\
  \end{bmatrix},
  \end{eqnarray*}
\begin{eqnarray*}
(\ac^\dag)^{(3)}=\begin{bmatrix}
    0.7451  & 0.7255 & 5.0065\\
    1.1372  & 0.3529 & 3.4837\\
    -0.2353 & 0.1568 & -0.0196\\
  \end{bmatrix},\quad
(\ac^\dag)^{(4)}=\begin{bmatrix}
    -0.2723  & -0.3815 & -1.6113\\
    -0.5629  & -0.0718 & -1.0905\\
    0.1057 & -0.0651 & -0.0610\\
  \end{bmatrix}.
\end{eqnarray*}
\end{example}

\section{The Drazin inverse of tensors under the C-product}\label{three1}
In this section, we will give some expressions of the Drazin inverse of tensors. Then, an algorithm is established for the Drazin inverse of a tensor.

\subsection{The expressions of the Drazin inverse of tensors}

Recall that the index of a matrix $A$ is defined as the smallest nonnegative integer $k$ such that
$\rk(A^k) = \rk(A^{k+1})$, which is denoted by $\ind(A)$. Now, let us define the index of a tensor $\mathcal{A}$.

\begin{definition} %\label{d511}
Let $\ac \in \CC^{n_1\times n_1\times n_3}$. The index of the tensor $\mathcal{A}$ is defined as
\begin{eqnarray*} %\label{Fa}
\ind(\mathcal{A})=\ind(\matt(\mathcal{A})).
\end{eqnarray*}
\end{definition}

\begin{lemma} \label{d52}
Let $\ac\in \CC^{n_1\times n_1\times n_3}$. Suppose
that $\ac$ can be expressed as
\begin{eqnarray*} %\label{Fa}
\DCT(\matt(\mathcal{A}))=\left[
   \begin{array}{cccc}
     L(\mathcal{A})^{(1)} &  &  &  \\
      & L(\mathcal{A})^{(2)} &  &  \\
      &  & \ddots &  \\
      &  &  & L(\mathcal{A})^{(n_3)} \\
   \end{array}
 \right].
\end{eqnarray*}
Then, $\ind(\mathcal{A})=\max\limits_{1\leq i\leq n_3}\{\ind(L(\mathcal{A})^{(i)})\}$.
\end{lemma}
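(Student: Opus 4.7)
The plan is to chain together three observations: (i) $\matt(\mathcal{A})$ is similar to a block-diagonal matrix whose blocks are the frontal slices $L(\mathcal{A})^{(i)}$, (ii) the index is a similarity invariant, and (iii) the index of a block-diagonal matrix equals the maximum of the indices of its diagonal blocks. Applying these in sequence gives the stated identity directly from the definition $\ind(\mathcal{A})=\ind(\matt(\mathcal{A}))$.

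First, I would invoke Lemma \ref{ll2}. Setting $P=\mathbf{C}_{n_3}\otimes\mathbf{I}_{n_1}$ (noting $n_2=n_1$ in this square setting) and $D=\mathrm{blkdiag}(L(\mathcal{A})^{(1)},\ldots,L(\mathcal{A})^{(n_3)})$, the lemma gives $P\,\matt(\mathcal{A})\,P^{-1}=D$, so $\matt(\mathcal{A})$ and $D$ are similar. For similar square matrices $A=P^{-1}BP$, one has $A^k=P^{-1}B^kP$ for every nonnegative integer $k$, and therefore $\rk(A^k)=\rk(B^k)$; consequently $\ind$ is a similarity invariant. This reduces the claim to showing $\ind(D)=\max_{1\leq i\leq n_3}\ind(L(\mathcal{A})^{(i)})$.

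Next, since $D$ is block-diagonal, $D^k=\mathrm{blkdiag}(L(\mathcal{A})^{(1),k},\ldots,L(\mathcal{A})^{(n_3),k})$ and hence
\[
\rk(D^k)=\sum_{i=1}^{n_3}\rk\bigl(L(\mathcal{A})^{(i),k}\bigr).
\]
Each summand is a nonincreasing function of $k$ that stabilizes at $k=\ind(L(\mathcal{A})^{(i)})$, by the definition of the index of a matrix. Writing $k_i=\ind(L(\mathcal{A})^{(i)})$ and $k^\ast=\max_i k_i$, the sum stabilizes precisely at $k^\ast$: for $k<k^\ast$ at least one index $i$ satisfies $k<k_i$, so $\rk(L(\mathcal{A})^{(i),k})>\rk(L(\mathcal{A})^{(i),k+1})$, which forces $\rk(D^k)>\rk(D^{k+1})$; and for $k=k^\ast$ every summand has stabilized, so $\rk(D^{k^\ast})=\rk(D^{k^\ast+1})$. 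Therefore $\ind(D)=k^\ast$.

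Combining the three steps gives $\ind(\mathcal{A})=\ind(\matt(\mathcal{A}))=\ind(D)=\max_{1\leq i\leq n_3}\ind(L(\mathcal{A})^{(i)})$. I do not anticipate a real obstacle here; the only point requiring minor care is the well-known monotonicity argument that $\rk(B^k)$ is nonincreasing and stabilizes exactly at $\ind(B)$, which I would invoke rather than reprove.
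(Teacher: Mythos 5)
Your proposal is correct and follows essentially the same route as the paper: conjugate $\matt(\mathcal{A})$ to the block-diagonal matrix via Lemma \ref{ll2}, take $k$-th powers, and conclude that the index equals the maximum of the indices of the blocks. The only difference is that you spell out the rank-sum/stabilization argument for the block-diagonal matrix, which the paper simply asserts, so your write-up is if anything slightly more complete.
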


\begin{proof}
Notice that
\begin{equation*}
\matt(\mathcal{A})=(\mathbf{C}^{-1}_{n_3}\otimes\mathbf{I}_{n_1})\left[
   \begin{array}{cccc}
     L(\mathcal{A})^{(1)} &  &  &  \\
      & L(\mathcal{A})^{(2)} &  &  \\
      &  & \ddots &  \\
      &  &  & L(\mathcal{A})^{(n_3)} \\
   \end{array}
 \right](\mathbf{C}_{n_3}\otimes\mathbf{I}_{n_1}).
\end{equation*}
Thus,
\begin{eqnarray*}
(\matt(\mathcal{A}))^k&=&(\mathbf{C}^{-1}_{n_3}\otimes\mathbf{I}_{n_1})\left[
   \begin{array}{cccc}
     L(\mathcal{A})^{(1)} &  &  &  \\
      & L(\mathcal{A})^{(2)} &  &  \\
      &  & \ddots &  \\
      &  &  & L(\mathcal{A})^{(n_3)} \\
   \end{array}
 \right]^k(\mathbf{C}_{n_3}\otimes\mathbf{I}_{n_1})\\
&=&(\mathbf{C}^{-1}_{n_3}\otimes\mathbf{I}_{n_1})\left[
   \begin{array}{cccc}
     (L(\mathcal{A})^{(1)})^k &  &  &  \\
      & (L(\mathcal{A})^{(2)})^k &  &  \\
      &  & \ddots &  \\
      &  &  & (L(\mathcal{A})^{(n_3)})^k \\
   \end{array}
 \right](\mathbf{C}_{n_3}\otimes\mathbf{I}_{n_1}) ,
\end{eqnarray*}
which implies the index of $\left[
   \begin{array}{cccc}
     L(\mathcal{A})^{(1)} &  &  &  \\
      & L(\mathcal{A})^{(2)} &  &  \\
      &  & \ddots &  \\
      &  &  & L(\mathcal{A})^{(n_3)} \\
   \end{array}
 \right]$ is $\max\limits_{1\leq i\leq n_3}\{\ind(L(\mathcal{A})^{(i)}\}$. Therefore, $\ind(\matt(\mathcal{A}))=\ind(\mathcal{A})=\max\limits_{1\leq i\leq n_3}\{\ind(L(\mathcal{A})^{(i)})\}$.
\end{proof}

Next, we will give the definition of the Drazin inverse of a tensor. Before that we note that $\mathcal{A}^{k}=\underbrace{\mathcal{A}*_c\cdots*_c\mathcal{A}}_k$.

\begin{definition} \label{d52}
Let $\ac\in \CC^{n_1\times n_1\times n_3}$ and $\ind(\ac)=k$. Then, the tensor $\mathcal{X}\in \CC^{n_1\times n_1\times n_3}$ satisfying
\begin{equation}\label{mpk3}
\mathcal{A}^{k+1}*_c\mathcal{X} = \mathcal{A}^{k}, \quad \mathcal{X}*_c\mathcal{A}*_c\mathcal{X} = \mathcal{X}, \quad \mathcal{A}*_c\mathcal{X} = \mathcal{X}*_c\mathcal{A},
\end{equation}
is called the {\bf{Drazin inverse}} of the
tensor $\ac$ and is denoted by $\ac^D$. Especially, when $k=1$, $\mathcal{X}$ is called the {\bf{group inverse}} of the
tensor $\ac$ and is denoted by $\ac^\#$.
\end{definition}

\begin{lemma}\label{l91}
Let $\ac \in
\CC^{n_1\times n_1\times n_3}$ and
\begin{eqnarray*} %\label{Fa}
\DCT(\matt(\mathcal{A}))=\left[
   \begin{array}{cccc}
     L(\mathcal{A})^{(1)} &  &  &  \\
      & L(\mathcal{A})^{(2)} &  &  \\
      &  & \ddots &  \\
      &  &  & L(\mathcal{A})^{(n_3)} \\
   \end{array}
 \right].
\end{eqnarray*}
If $\ind(\mathcal{A})=k$, then the Drazin inverse of $\mathcal{A}$  exists and is unique.
\end{lemma}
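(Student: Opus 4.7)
The plan is to reduce the tensor Drazin inverse problem to the matrix Drazin inverse problem slice-by-slice in the DCT domain, exploiting the fact that $L$ turns the C-product into the face-wise product. First I would record the key structural identity: because $\matt(\mathcal{A}*_c\mathcal{B}) = \matt(\mathcal{A})\matt(\mathcal{B})$ and $\DCT(\matt(\mathcal{A}))$ is block diagonal with blocks $L(\mathcal{A})^{(i)}$, one gets by induction $L(\mathcal{A}^m)^{(i)} = (L(\mathcal{A})^{(i)})^m$ for every integer $m\ge 1$. Combined with the previous lemma, $\ind(\mathcal{A}) = k$ means $\ind(L(\mathcal{A})^{(i)}) \le k$ for each $i = 1,\dots,n_3$.

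For existence, I would build $\mathcal{X}$ face-wise. For every $i$, let $X_i := (L(\mathcal{A})^{(i)})^D$, the classical matrix Drazin inverse of $L(\mathcal{A})^{(i)}$, which exists since any square complex matrix has a Drazin inverse. Since $\ind(L(\mathcal{A})^{(i)}) \le k$, the matrix $X_i$ satisfies
\begin{equation*}
(L(\mathcal{A})^{(i)})^{k+1}X_i = (L(\mathcal{A})^{(i)})^k, \quad X_i L(\mathcal{A})^{(i)} X_i = X_i, \quad L(\mathcal{A})^{(i)} X_i = X_i L(\mathcal{A})^{(i)}.
\end{equation*}
Then define
\begin{equation*}
\mathcal{X} := \ten\bigl(\IDCT\bigl(\diag(X_1,\dots,X_{n_3})\bigr)\bigr),
\end{equation*}
so that $L(\mathcal{X})^{(i)} = X_i$. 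Applying $L$ to each of the three defining tensor equations in $(\ref{mpk3})$ reduces them, face by face, to the three displayed matrix identities; since $L$ is invertible, verification of (\ref{mpk3}) is immediate.

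For uniqueness, I would suppose that $\mathcal{X}_1$ and $\mathcal{X}_2$ both satisfy (\ref{mpk3}) and apply $L$ to each equation to obtain, for every $i$, the three matrix Drazin-defining identities for both $L(\mathcal{X}_1)^{(i)}$ and $L(\mathcal{X}_2)^{(i)}$ with the common exponent $k \ge \ind(L(\mathcal{A})^{(i)})$. The standard uniqueness theorem for the matrix Drazin inverse (which holds whenever the power index in the first equation is at least the true matrix index) then forces $L(\mathcal{X}_1)^{(i)} = L(\mathcal{X}_2)^{(i)} = (L(\mathcal{A})^{(i)})^D$ for every $i$. Since $L$ is a bijection on $\CC^{n_1\times n_1\times n_3}$, we conclude $\mathcal{X}_1 = \mathcal{X}_2$.

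The main obstacle is essentially bookkeeping: making the identifications $L(\mathcal{A}^m)^{(i)} = (L(\mathcal{A})^{(i)})^m$ and $L(\mathcal{A}*_c\mathcal{X})^{(i)} = L(\mathcal{A})^{(i)} L(\mathcal{X})^{(i)}$ explicit so that the three tensor equations transparently decouple into $n_3$ independent matrix systems. Once this is done, both existence and uniqueness inherit directly from the matrix theory, so no genuinely new argument beyond the block-diagonalization provided by Lemma \ref{ll2} is required.
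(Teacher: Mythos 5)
Your proposal is correct and follows essentially the same route as the paper: block-diagonalize $\matt(\mathcal{A})$ via the DCT, define $\mathcal{X}$ slice-wise through the matrix Drazin inverses $(L(\mathcal{A})^{(i)})^D$, and transfer existence and uniqueness from the classical matrix theory back through $\ten(\IDCT(\cdot))$. Your version is in fact a bit more explicit than the paper's (spelling out $L(\mathcal{A}^m)^{(i)}=(L(\mathcal{A})^{(i)})^m$ and the fact that the defining equations hold with any exponent $k\ge\ind(L(\mathcal{A})^{(i)})$), where the paper simply declares these verifications trivial.
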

\begin{proof}
Since $\ind(\mathcal{A}) = k$, one has that the matrices $L(\mathcal{A})^{(1)}, \ldots, L(\mathcal{A})^{(n_3)}$ are Drazin
invertible. Let $\mathbf{X}_i = (L(\mathcal{A})^{(i)})^D$, $i=1,2,..., n_3$. Then,
\begin{equation*}
\mathcal{X}=\ten(\IDCT(\left[
   \begin{array}{ccc}
    \mathbf{X}_1 &  &    \\
      &   \ddots &  \\
       &  & \mathbf{X}_{n_3} \\
   \end{array}
 \right]))
\end{equation*}
satisfies the three equations of (\ref{mpk3}). It is trivial to see that $\xc$ is the Drazin inverse of $\ac$.

Suppose both tensors $\mathcal{X}$ and $\mathcal{Y}$ are the solutions of (\ref{mpk3}). Let
\begin{eqnarray*} %\label{Fa}
\DCT(\matt(\mathcal{X}))=\left[
   \begin{array}{cccc}
     L(\mathcal{X})^{(1)} &  &  &  \\
      & L(\mathcal{X})^{(2)} &  &  \\
      &  & \ddots &  \\
      &  &  & L(\mathcal{X})^{(n_3)} \\
   \end{array}
 \right]
 \end{eqnarray*}
and
\begin{eqnarray*}
\DCT(\matt(\mathcal{A}))=\left[
   \begin{array}{cccc}
     L(\mathcal{Y})^{(1)} &  &  &  \\
      & L(\mathcal{Y})^{(2)} &  &  \\
      &  & \ddots &  \\
      &  &  & L(\mathcal{Y})^{(n_3)} \\
   \end{array}
 \right].
\end{eqnarray*}
It follows $L(\mathcal{X})^{(i)}= (L(\mathcal{A})^{(i)})^D$ and $L(\mathcal{Y})^{(i)}= (L(\mathcal{A})^{(i)})^D$, $i=1,2,...,n_3$. Therefore, $\mathcal{X}$ and $\mathcal{Y}$ coincides since $L(\mathcal{X})^{(i)}$ and $L(\mathcal{Y})^{(i)}$ are the same.
\end{proof}

\begin{theorem}\label{1t1}
Let $\ac \in \CC^{n_1\times n_1\times n_3}$ and $\ind(\mathcal{A})=k$. Then,
%\begin{eqnarray*}
%\DCT(\matt(\mathcal{A}))=\left[
%   \begin{array}{cccc}
%     L(\mathcal{A})^{(1)} &  &  &  \\
%      & L(\mathcal{A})^{(2)} &  &  \\
%      &  & \ddots &  \\
%      &  &  & L(\mathcal{A})^{(n_3)} \\
%   \end{array}
% \right].
%\end{eqnarray*}
\begin{equation*}
\ac^D=\ac^k*_c(\ac^{2k+1})^{(1)}*_c\ac^k.
\end{equation*}
In particular,
\begin{equation*}
\ac^D=\ac^k*_c(\ac^{2k+1})^{\dag}*_c\ac^k.
\end{equation*}
\end{theorem}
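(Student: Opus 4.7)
The plan is to reduce the tensor identity to its matrix analogue via the block-diagonalization of Lemma \ref{ll2}. Applying $\DCT\circ\matt$ converts $\ac$ to a block-diagonal matrix whose diagonal blocks are the frontal slices $L(\ac)^{(i)}$, $i=1,\ldots,n_3$, and under this correspondence the C-product becomes ordinary matrix multiplication slice by slice, so $L(\ac^j)^{(i)}=(L(\ac)^{(i)})^j$ for every $j\ge 0$. In addition, the index identity established above in this section gives $\ind(L(\ac)^{(i)})\le k$ for each $i$, and Lemma \ref{l91} yields $L(\ac^D)^{(i)}=(L(\ac)^{(i)})^D$.

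First I would fix an arbitrary $\{1\}$-inverse $\yc\in\ac^{2k+1}\{1\}$, i.e. $\ac^{2k+1}*_c\yc*_c\ac^{2k+1}=\ac^{2k+1}$. Passing to slice $i$ under $L$ turns this into $(L(\ac)^{(i)})^{2k+1}\,L(\yc)^{(i)}\,(L(\ac)^{(i)})^{2k+1}=(L(\ac)^{(i)})^{2k+1}$, so each $L(\yc)^{(i)}$ is a $\{1\}$-inverse of the matrix $(L(\ac)^{(i)})^{2k+1}$.

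The heart of the argument is the classical matrix identity: if $B$ is a square matrix with $\ind(B)\le k$ and $Y$ is any $\{1\}$-inverse of $B^{2k+1}$, then $B^D=B^kYB^k$. I would establish this via the core-nilpotent decomposition $B=C+N$ with $CN=NC=0$, $N^k=0$, and $C$ group-invertible; since $N^k=0$, one has $B^j=C^j$ for all $j\ge k$, and writing $C=\diag(C_0,0)$ on the splitting $\R{C}\oplus\N{C}$ with $C_0$ invertible, the $\{1\}$-inverse condition forces the $(1,1)$-block of $Y$ to satisfy $C_0^{2k+1}Y_{11}C_0^{2k+1}=C_0^{2k+1}$, hence $Y_{11}=C_0^{-(2k+1)}$. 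Then $B^kYB^k$ collapses to $\diag(C_0^{-1},0)=C^\#=B^D$, independently of the remaining blocks of $Y$.

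Applying this slice by slice yields $L(\ac^D)^{(i)}=(L(\ac)^{(i)})^k\,L(\yc)^{(i)}\,(L(\ac)^{(i)})^k=L(\ac^k*_c\yc*_c\ac^k)^{(i)}$ for every $i$, and invertibility of $L$ then gives $\ac^D=\ac^k*_c(\ac^{2k+1})^{(1)}*_c\ac^k$. The second formula follows immediately, since the Moore-Penrose inverse $(\ac^{2k+1})^\dag$ satisfies the first equation of (\ref{mp}) and is therefore a particular $\{1\}$-inverse of $\ac^{2k+1}$. The only genuine obstacle is the matrix identity itself, where the hypothesis $\ind(\ac)=k$ enters in earnest; everything else is a transparent slice-wise reassembly under $L$.
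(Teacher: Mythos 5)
Your proof is correct, but it takes a genuinely different route from the paper's. The paper never leaves the tensor level: it sets $\xc=\ac^k*_c(\ac^{2k+1})^{(1)}*_c\ac^k$, uses the identity $\ac^k=\ac^{2k+1}*_c(\ac^D)^{k+1}$ (obtained by iterating $\ac^k=\ac^{k+1}*_c\ac^D$) to verify directly that $\xc$ satisfies the three defining equations (\ref{mpk3}), and concludes by the uniqueness of the Drazin inverse from Lemma \ref{l91}; the only property of $(\ac^{2k+1})^{(1)}$ it uses is the single equation $\ac^{2k+1}*_c(\ac^{2k+1})^{(1)}*_c\ac^{2k+1}=\ac^{2k+1}$. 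You instead transfer the statement through the block-diagonalization of Lemma \ref{ll2}, reduce to the classical matrix fact that $B^D=B^kYB^k$ for any $Y\in B^{2k+1}\{1\}$ whenever $\ind(B)\le k$, prove that fact by the core-nilpotent decomposition (the block computation forcing $Y_{11}=C_0^{-(2k+1)}$ is the standard one), and reassemble with $L^{-1}$. Your reduction is sound: slice-wise multiplicativity of $L$, the index lemma giving $\ind(L(\ac)^{(i)})\le k$, and the identification $L(\ac^D)^{(i)}=(L(\ac)^{(i)})^D$ coming from the construction in Lemma \ref{l91} are all available in the paper, and fixing an arbitrary $\{1\}$-inverse (nonempty since $(\ac^{2k+1})^\dag$ exists) handles the quantifier correctly. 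The trade-off: the paper's verification is purely equational, so it works verbatim in any setting where a Drazin inverse makes sense and shows exactly where the $\{1\}$-inverse hypothesis enters, whereas your argument matches the transfer strategy the paper uses elsewhere (e.g.\ Lemma \ref{l91} and Theorem \ref{tt10}), localizes the real content in one well-known matrix lemma, and makes transparent that the formula holds slice by slice even when the individual indices $\ind(L(\ac)^{(i)})$ differ. The deduction of the second formula from the first, via $(\ac^{2k+1})^\dag$ being a particular $\{1\}$-inverse, is the same in both proofs.
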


\begin{proof}
By the definition of the Drazin inverse, one has
\begin{eqnarray*}
% \nonumber to remove numbering (before each equation)
  \mathcal{A}^k &=& \mathcal{A}^{k+1}*_c\mathcal{A}^D=\mathcal{A}^{k+2}*_c(\mathcal{A}^D)^2 \\
   &=& \cdots\\
   &=& \mathcal{A}^{2k}*_c(\mathcal{A}^D)^k\\
   &=& \mathcal{A}^{2k+1}*_c(\mathcal{A}^D)^{k+1}.
\end{eqnarray*}
Let $\xc=\ac^k*_c(\ac^{2k+1})^{(1)}*_c\ac^k$. Therefore, we have
\begin{eqnarray*}
% \nonumber to remove numbering (before each equation)
  \mathcal{A}^{k+1}*_c\mathcal{X}&=&\mathcal{A}^{k+1}*_c\ac^k*_c(\ac^{2k+1})^{(1)}*_c\ac^k
  =\mathcal{A}^{2k+1}*_c(\ac^{2k+1})^{(1)}*_c\mathcal{A}^{2k+1}*_c(\mathcal{A}^D)^{k+1}  \\
   &=& \mathcal{A}^{2k+1}*_c(\mathcal{A}^D)^{k+1}=\mathcal{A}^{k},
\end{eqnarray*}
\begin{eqnarray*}
% \nonumber to remove numbering (before each equation)
  \mathcal{X}*_c\mathcal{A}*_c\mathcal{X}&=&\ac^k*_c(\ac^{2k+1})^{(1)}*_c\ac^k*_c\ac*_c
  \ac^k*_c(\ac^{2k+1})^{(1)}*_c\ac^k  \\
   &=& \ac^k*_c(\ac^{2k+1})^{(1)}*_c\ac^k=\mathcal{X}.
   \end{eqnarray*}
Moreover,
\begin{eqnarray*}
% \nonumber to remove numbering (before each equation)
  \mathcal{A}*_c\mathcal{X}&=&\ac*_c\ac^k*_c(\ac^{2k+1})^{(1)}*_c\ac^k
  =\ac*_c\mathcal{A}^{2k}*_c(\mathcal{A}^D)^k*_c(\ac^{2k+1})^{(1)}*_c
  \mathcal{A}^{2k+1}*_c(\mathcal{A}^D)^{k+1}  \\
   &=& (\mathcal{A}^D)^k*_c\mathcal{A}^{2k+1}*_c(\ac^{2k+1})^{(1)}
   *_c\mathcal{A}^{2k+1}*_c(\mathcal{A}^D)^{k+1}=
   (\mathcal{A}^D)^k*_c\mathcal{A}^{2k+1}*_c(\mathcal{A}^D)^{k+1},
\end{eqnarray*}
and
\begin{eqnarray*}
\mathcal{X}*_c\mathcal{A} &=&\mathcal{A}^{k}*_c(\ac^{2k+1})^{(1)}*_c\mathcal{A}^{k+1}
=\mathcal{A}^{2k+1}*_c(\mathcal{A}^D)^{k+1}*_c(\ac^{2k+1})^{(1)}*_c
  \ac*_c\mathcal{A}^{2k}*_c(\mathcal{A}^D)^k\\
  &=&(\mathcal{A}^D)^{k+1}*_c\mathcal{A}^{2k+1}*_c(\ac^{2k+1})^{(1)}
   *_c\mathcal{A}^{2k+1}*_c(\mathcal{A}^D)^k
   =(\mathcal{A}^D)^{k+1}*_c\mathcal{A}^{2k+1}*_c(\mathcal{A}^D)^k\\
   &=&(\mathcal{A}^D)^k*_c\mathcal{A}^{2k+1}*_c(\mathcal{A}^D)^{k+1},
\end{eqnarray*}
which implies $\mathcal{A}*_c\mathcal{X}=\mathcal{X}*_c\mathcal{A}$. Thus, we obtain
$\ac^D=\ac^k*_c(\ac^{2k+1})^{(1)}*_c\ac^k$. By taking $(\ac^{2k+1})^{\dag}$ for $(\ac^{2k+1})^{(1)}$,
we have
$\ac^D=\ac^k*_c(\ac^{2k+1})^{\dag}*_c\ac^k$.
\end{proof}

\begin{theorem}
Let $\mathcal{A}\in\CC^{n_1\times n_1\times n_3}$ and $\ind(\mathcal{A})=k$. Suppose $\mathcal{A}^k$ has the C-QDR decomposition $\mathcal{A}^k=\mathcal{Q}*_c\mathcal{D}*_c\mathcal{R}$.
Then,
\begin{equation*}
  \mathcal{A}^D=\mathcal{Q}*_c(\mathcal{R}*_c\mathcal{A}*_c\mathcal{Q})^{-1}*_c\mathcal{R}.
\end{equation*}
\end{theorem}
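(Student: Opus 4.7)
The plan is to mimic the pattern of the preceding Drazin theorem (Theorem \ref{1t1}): set $\mathcal{S} := \mathcal{R}*_c\mathcal{A}*_c\mathcal{Q}$ and $\mathcal{X} := \mathcal{Q}*_c\mathcal{S}^{-1}*_c\mathcal{R}$, and verify the three defining equations (\ref{mpk3}) of the Drazin inverse. The driving algebraic observation is that
\[
\mathcal{A}^{k+1} \;=\; \mathcal{A}^{k}*_c\mathcal{A} \;=\; \mathcal{Q}*_c\mathcal{D}*_c\mathcal{R}*_c\mathcal{A},
\]
so the factor $\mathcal{R}*_c\mathcal{A}*_c\mathcal{Q} = \mathcal{S}$ appears naturally inside $\mathcal{A}^{k+1}*_c\mathcal{X}$ and cancels against $\mathcal{S}^{-1}$.

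First, I need to justify that $\mathcal{S}$ is invertible. Since the $L$-transform turns $*_c$ into block-diagonal matrix multiplication (Lemma \ref{ll2}), it suffices to show that each frontal slice $\widehat{\mathcal{S}}^{(i)} = \widehat{\mathcal{R}}^{(i)}\widehat{\mathcal{A}}^{(i)}\widehat{\mathcal{Q}}^{(i)}$ is invertible. By the C-QDR construction, $(\widehat{\mathcal{A}}^{(i)})^{k} = \widehat{\mathcal{Q}}^{(i)}\widehat{\mathcal{D}}^{(i)}\widehat{\mathcal{R}}^{(i)}$ is a rank-$r$ factorisation with $\widehat{\mathcal{Q}}^{(i)}$ of full column rank, $\widehat{\mathcal{D}}^{(i)}$ invertible and $\widehat{\mathcal{R}}^{(i)}$ of full row rank, and Lemma \ref{l91} gives $\ind(\widehat{\mathcal{A}}^{(i)}) \le k$, hence $\rk((\widehat{\mathcal{A}}^{(i)})^{2k+1}) = r$. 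Expanding $(\widehat{\mathcal{A}}^{(i)})^{2k+1} = \widehat{\mathcal{Q}}^{(i)}\widehat{\mathcal{D}}^{(i)}(\widehat{\mathcal{R}}^{(i)}\widehat{\mathcal{A}}^{(i)}\widehat{\mathcal{Q}}^{(i)})\widehat{\mathcal{D}}^{(i)}\widehat{\mathcal{R}}^{(i)}$ and cancelling the outer full-rank factors forces $\rk(\widehat{\mathcal{R}}^{(i)}\widehat{\mathcal{A}}^{(i)}\widehat{\mathcal{Q}}^{(i)}) = r$, so $\mathcal{S}$ is invertible. The first two Drazin equations then fall out by straightforward $*_c$ cancellation:
\begin{eqnarray*}
\mathcal{A}^{k+1}*_c\mathcal{X} &=& \mathcal{Q}*_c\mathcal{D}*_c(\mathcal{R}*_c\mathcal{A}*_c\mathcal{Q})*_c\mathcal{S}^{-1}*_c\mathcal{R} \;=\; \mathcal{Q}*_c\mathcal{D}*_c\mathcal{R} \;=\; \mathcal{A}^{k},\\
\mathcal{X}*_c\mathcal{A}*_c\mathcal{X} &=& \mathcal{Q}*_c\mathcal{S}^{-1}*_c(\mathcal{R}*_c\mathcal{A}*_c\mathcal{Q})*_c\mathcal{S}^{-1}*_c\mathcal{R} \;=\; \mathcal{Q}*_c\mathcal{S}^{-1}*_c\mathcal{R} \;=\; \mathcal{X}.
\end{eqnarray*}

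The main obstacle is the commutation $\mathcal{A}*_c\mathcal{X} = \mathcal{X}*_c\mathcal{A}$, since no purely symbolic cancellation swaps the outer $\mathcal{A}$ with the $\mathcal{A}$ buried inside $\mathcal{S}^{-1}$; the $\mathcal{D}$ factor alone is not enough to push it through. I would again pass to the frontal slices: using that $\widehat{\mathcal{D}}^{(i)}$ is invertible, one rewrites
\[
\widehat{\mathcal{Q}}^{(i)}(\widehat{\mathcal{R}}^{(i)}\widehat{\mathcal{A}}^{(i)}\widehat{\mathcal{Q}}^{(i)})^{-1}\widehat{\mathcal{R}}^{(i)} \;=\; (\widehat{\mathcal{Q}}^{(i)}\widehat{\mathcal{D}}^{(i)})(\widehat{\mathcal{R}}^{(i)}\widehat{\mathcal{A}}^{(i)}\widehat{\mathcal{Q}}^{(i)}\widehat{\mathcal{D}}^{(i)})^{-1}\widehat{\mathcal{R}}^{(i)},
\]
which is precisely the classical matrix Drazin formula $F(GAF)^{-1}G$ produced by the full-rank factorisation $(\widehat{\mathcal{A}}^{(i)})^{k} = (\widehat{\mathcal{Q}}^{(i)}\widehat{\mathcal{D}}^{(i)})\widehat{\mathcal{R}}^{(i)}$. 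Thus each $\widehat{\mathcal{X}}^{(i)}$ is the ordinary matrix Drazin inverse of $\widehat{\mathcal{A}}^{(i)}$, so $\widehat{\mathcal{A}}^{(i)}\widehat{\mathcal{X}}^{(i)} = \widehat{\mathcal{X}}^{(i)}\widehat{\mathcal{A}}^{(i)}$ for every $i$; applying $L^{-1}$ yields the desired commutativity, and by uniqueness (Lemma \ref{l91}) we conclude $\mathcal{X} = \mathcal{A}^{D}$.
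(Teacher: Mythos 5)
Your proof is correct, but it takes a genuinely different route from the paper's. The paper never verifies the defining equations (\ref{mpk3}) directly: it first gets invertibility of $\mathcal{R}*_c\mathcal{A}*_c\mathcal{Q}$ by citing the Stanimirovi\'{c}--Djordjevi\'{c} full-rank result on each slice, then uses Cline's theorem to write $\bigl(\mathcal{Q}*_c\mathcal{D}*_c\mathcal{R}*_c\mathcal{A}*_c\mathcal{Q}*_c\mathcal{D}*_c\mathcal{R}\bigr)^{\dag}=(\mathcal{D}*_c\mathcal{R})^{\dag}*_c(\mathcal{R}*_c\mathcal{A}*_c\mathcal{Q})^{-1}*_c(\mathcal{Q}*_c\mathcal{D})^{\dag}$, and finally substitutes this into the Moore--Penrose representation $\mathcal{A}^{D}=\mathcal{A}^{k}*_c(\mathcal{A}^{2k+1})^{\dag}*_c\mathcal{A}^{k}$ of Theorem \ref{1t1} and cancels. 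You instead check the three Drazin equations themselves: the first two by symbolic cancellation once $\mathcal{S}=\mathcal{R}*_c\mathcal{A}*_c\mathcal{Q}$ is invertible, the invertibility by an elementary rank count on the slices (your expansion $(L(\mathcal{A})^{(i)})^{2k+1}=\widehat{\mathcal{Q}}^{(i)}\widehat{\mathcal{D}}^{(i)}(\widehat{\mathcal{R}}^{(i)}\widehat{\mathcal{A}}^{(i)}\widehat{\mathcal{Q}}^{(i)})\widehat{\mathcal{D}}^{(i)}\widehat{\mathcal{R}}^{(i)}$ together with $\rk((L(\mathcal{A})^{(i)})^{2k+1})=r$ re-derives from scratch what the paper imports as a citation), and the commutation by recognizing each slice of $\mathcal{X}$ as the classical full-rank-factorization representation $F(GAF)^{-1}G$ of the matrix Drazin inverse, then lifting via $L^{-1}$ and invoking uniqueness. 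What your route buys is complete independence from the Moore--Penrose machinery (Theorem \ref{1t1} and Cline's result) and a self-contained invertibility argument; what it still shares with the paper is reliance on a slicewise classical matrix fact for the hardest step (commutativity), essentially the same theorem the paper cites for invertibility. Two small remarks: the index bound $\ind(L(\mathcal{A})^{(i)})\le k$ comes from the paper's lemma stating $\ind(\mathcal{A})=\max_{1\le i\le n_3}\ind(L(\mathcal{A})^{(i)})$, not from Lemma \ref{l91} (a harmless mis-citation), and your use of the classical representation with the common exponent $k$ rather than the individual slice index is legitimate, since that representation holds for any exponent at least the index --- worth a sentence if you write this up.
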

\begin{proof}
%\begin{equation*}
%\mathcal{A}^{k+1}*_c\mathcal{X} =\mathcal{Q}*_c\mathcal{D}*_c\mathcal{R}*_c\mathcal{A}*_c
%\mathcal{Q}*_c(\mathcal{R}*_c\mathcal{A}*_c\mathcal{Q})^{-1}*_c\mathcal{R}=
%\mathcal{Q}*_c\mathcal{D}*_c\mathcal{R}=\mathcal{A}^k.
%\end{equation*}
%\begin{equation*}
%\mathcal{X}*_c\mathcal{A}*_c\mathcal{X} =\mathcal{Q}*_c(\mathcal{R}*_c\mathcal{A}*_c\mathcal{Q})^{-1}*_c\mathcal{R}*_c\mathcal{A}*_c
%\mathcal{Q}*_c(\mathcal{R}*_c\mathcal{A}*_c\mathcal{Q})^{-1}*_c\mathcal{R}=
%\mathcal{Q}*_c(\mathcal{R}*_c\mathcal{A}*_c\mathcal{Q})^{-1}*_c\mathcal{R}=\mathcal{X}.
%\end{equation*}
%\begin{equation*}
%\mathcal{X}*_c\mathcal{A} =\mathcal{Q}*_c(\mathcal{R}*_c\mathcal{A}*_c\mathcal{Q})^{-1}*_c\mathcal{R}*_c\mathcal{A}
%=
%\mathcal{A}*_c\mathcal{Q}*_c(\mathcal{R}*_c\mathcal{A}*_c\mathcal{Q})^{-1}*_c\mathcal{R}
%=\mathcal{A}*_c\mathcal{X}.
%\end{equation*}
Let
\begin{eqnarray*} %\label{Fa}
\DCT(\matt(\mathcal{A}))=\left[
   \begin{array}{cccc}
     L(\mathcal{A})^{(1)} &  &  &  \\
      & L(\mathcal{A})^{(2)} &  &  \\
      &  & \ddots &  \\
      &  &  & L(\mathcal{A})^{(n_3)} \\
   \end{array}
 \right].
\end{eqnarray*}
Since $\mathcal{A}^k=\mathcal{Q}*_c\mathcal{D}*_c\mathcal{R}$ is the C-QDR decomposition, we conclude that $(L(\mathcal{A})^{(i)})^k=\mathbf{Q}_i\mathbf{D}_i\mathbf{R}_i$, $\mathbf{Q}_i\in\CC^{n_1\times r}_r$, $\mathbf{D}_i\in\CC^{r\times r}_r$,  $\mathbf{R}_i\in\CC^{r\times n_2}_r$, $i=1,2,...,n_3$ are the QDR decomposition of $(L(\mathcal{A})^{(i)})^k$. Notice that $(L(\mathcal{A})^{(i)})^k=(\mathbf{Q}_i\mathbf{D}_i)\mathbf{R}_i
=\mathbf{Q}_i(\mathbf{D}_i\mathbf{R}_i)$ are full rank decomposition of $(L(\mathcal{A})^{(i)})^k$. By \cite[Theorem 2.1]{SD}, we have $\mathbf{R}_iL(\mathcal{A})^{(i)}\mathbf{Q}_i\mathbf{D}_i$ and
$\mathbf{D}_i\mathbf{R}_iL(\mathcal{A})^{(i)}\mathbf{Q}_i$, $i=1,2,...,n_3$ are invertible. So are
$\mathcal{R}*_c\mathcal{A}*_c\mathcal{Q}*_c\mathcal{D}$ and $\mathcal{D}*_c\mathcal{R}*_c\mathcal{A}*_c\mathcal{Q}$. Hence, we have $\mathcal{R}*_c\mathcal{A}*_c\mathcal{Q}$ is invertible.

On the other hand, by \cite{C}, we conclude that $$(\mathbf{Q}_i\mathbf{D}_i\mathbf{R}_iL(\mathcal{A})^{(i)}\mathbf{Q}_i\mathbf{D}_i\mathbf{R}_i)^\dag
=(\mathbf{D}_i\mathbf{R}_i)^\dag
(\mathbf{R}_iL(\mathcal{A})^{(i)}\mathbf{Q}_i)^{-1}(\mathbf{Q}_i\mathbf{D}_i)^\dag, \ \ i=1,2,...,n_3$$
due to $\mathbf{R}_iL(\mathcal{A})^{(i)}\mathbf{Q}_i$ are invertible, $\mathbf{D}_i\mathbf{R}_i$ are  full row rank and $\mathbf{Q}_i\mathbf{D}_i$ are full column rank. Therefore,
\begin{equation*}
(\mathcal{Q}*_c\mathcal{D}*_c\mathcal{R}*_c\ac*_c\mathcal{Q}*_c\mathcal{D}*_c\mathcal{R})^{\dag}=
(\mathcal{D}*_c\mathcal{R})^{\dag}*_c(\mathcal{R}*_c\ac*_c\mathcal{Q})^{-1}*_c(\mathcal{Q}*_c\mathcal{D})^\dag.
\end{equation*}
By Theorem \ref{1t1}, we have
\begin{eqnarray*}
% \nonumber to remove numbering (before each equation)
  \ac^D=\ac^k*_c(\ac^{2k+1})^{\dag}*_c\ac^k &=& \ac^k*_c(\ac^{k}*_c\ac*_c\ac^{k})^{\dag}*_c\ac^k \\
   &=& \mathcal{Q}*_c\mathcal{D}*_c\mathcal{R}*_c (\mathcal{Q}*_c\mathcal{D}*_c\mathcal{R}*_c\ac*_c\mathcal{Q}*_c\mathcal{D}*_c\mathcal{R})^{\dag}
   *_c\mathcal{Q}*_c\mathcal{D}*_c\mathcal{R}\\
&=&  \mathcal{Q}*_c\mathcal{D}*_c\mathcal{R}*_c
(\mathcal{D}*_c\mathcal{R})^{\dag}*_c(\mathcal{R}*_c\ac*_c\mathcal{Q})^{-1}*_c(\mathcal{Q}*_c\mathcal{D})^\dag
*_c\mathcal{Q}*_c\mathcal{D}*_c\mathcal{R}\\
&=&\mathcal{Q}*_c(\mathcal{R}*_c\mathcal{A}*_c\mathcal{Q})^{-1}*_c\mathcal{R}.
\end{eqnarray*}
\end{proof}

In the following, we will establish another expression of the Drazin inverse  by using the core-nilpotent decomposition of the tensors.

\begin{definition}
Let $\ac \in \CC^{n_1\times n_1\times n_3}$. Then,
$$\mathcal{C}_{\mathcal{A}} = \mathcal{A}^2
*_c\mathcal{A}^D
%=\ten(\IDCT(\begin{bmatrix} (L(A)^{(1)})^2(L(A)^{(1)})^D & & \\ & \ddots & \\ &  & (L(A)^{(n_3)})^2(L(A)^{(n_3)})^D \end{bmatrix}))
$$
is called the {\bf{core part}} of the tensor $\mathcal{A}$.
\end{definition}

%\begin{theorem}
%Let $\ac \in \mathcal{H}^{n_1\times n_1\times n_3\times \cdots\times n_p}$ and $\mathcal{C}$ is the core of the quaternion tensor $\mathcal{A}$. Then
%$\mathcal{N} = \mathcal{A}-\mathcal{C}$
%is a nilpotent quaternion tensor of T-index k = IndT (A).
%\end{theorem}

\begin{lemma}
Let $\ac \in \CC^{n_1\times n_1\times n_3}$, $\ind(\mathcal{A})=k$ and $\mathcal{C}_{\mathcal{A}}\in \CC^{n_1\times n_1\times n_3}$ is the core part of the tensor $\mathcal{A}$. Define $\mathcal{N}_{\mathcal{A}}=\mathcal{A}-\mathcal{C}_{\mathcal{A}}$.
Then,
$$\mathcal{N}_{\mathcal{A}}^k = \mathcal{O}\ \ \text{and} \ \ \ind(\mathcal{N}_{\mathcal{A}})=k.$$
\end{lemma}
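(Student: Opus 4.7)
The plan is to reduce to the matrix case via the transform $L$ and Lemma \ref{ll2}, and then invoke the classical core–nilpotent decomposition for matrices slice by slice. Write $\DCT(\matt(\mathcal{A}))$ as the block diagonal matrix with diagonal blocks $L(\mathcal{A})^{(i)}$, $i=1,\dots,n_3$. From the proof of Lemma \ref{l91}, the Drazin inverse behaves multiplicatively with respect to $L$, namely $L(\mathcal{A}^D)^{(i)} = (L(\mathcal{A})^{(i)})^D$ for every $i$. Combined with the fact $L(\mathcal{A}*_c\mathcal{B})^{(i)} = L(\mathcal{A})^{(i)}L(\mathcal{B})^{(i)}$ (the face-wise product formula), this gives
\begin{equation*}
L(\mathcal{C}_{\mathcal{A}})^{(i)} = (L(\mathcal{A})^{(i)})^2\,(L(\mathcal{A})^{(i)})^D, \qquad L(\mathcal{N}_{\mathcal{A}})^{(i)} = L(\mathcal{A})^{(i)} - (L(\mathcal{A})^{(i)})^2(L(\mathcal{A})^{(i)})^D =: N_i.
\end{equation*}
Thus $N_i$ is exactly the nilpotent part in the standard matrix core–nilpotent decomposition of $L(\mathcal{A})^{(i)}$.

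Next, I would set $k_i = \ind(L(\mathcal{A})^{(i)})$, so that $k = \max_{1\le i\le n_3} k_i$ by Lemma \ref{d52}. The classical core–nilpotent theorem for matrices gives $N_i^{k_i} = 0$ and (for $k_i\ge 1$) $N_i^{k_i-1}\ne 0$; in particular $\ind(N_i)=k_i$. Since $k\ge k_i$ for every $i$, this forces $N_i^{k} = 0$ for every $i$. Using again the face-wise multiplicativity of $L$, the frontal slices $L(\mathcal{N}_{\mathcal{A}}^k)^{(i)} = N_i^{k} = 0$ for every $i$, hence $L(\mathcal{N}_{\mathcal{A}}^k)=\widehat{\mathcal{O}}$ and consequently $\mathcal{N}_{\mathcal{A}}^k = \mathcal{O}$.

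For the index claim, I would apply Lemma \ref{d52} to $\mathcal{N}_{\mathcal{A}}$:
\begin{equation*}
\ind(\mathcal{N}_{\mathcal{A}}) = \max_{1\le i\le n_3} \ind\!\bigl(L(\mathcal{N}_{\mathcal{A}})^{(i)}\bigr) = \max_{1\le i\le n_3}\ind(N_i) = \max_{1\le i\le n_3} k_i = k.
\end{equation*}
I expect no serious obstacle — the only subtlety is ensuring the slice-wise identification $L(\mathcal{A}^D)^{(i)} = (L(\mathcal{A})^{(i)})^D$, which is exactly what the existence-uniqueness argument in Lemma \ref{l91} establishes, so it can simply be cited. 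After that, everything is a direct consequence of the standard matrix core–nilpotent decomposition applied to each $L(\mathcal{A})^{(i)}$.
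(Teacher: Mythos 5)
Your proof is correct, but it takes a genuinely different route from the paper's. You push everything through the transform $L$: using the slice-wise identification $L(\mathcal{A}^D)^{(i)}=(L(\mathcal{A})^{(i)})^D$ established in the proof of Lemma \ref{l91} and the face-wise multiplicativity of $L$, you recognize each slice $L(\mathcal{N}_{\mathcal{A}})^{(i)}$ as the classical nilpotent part of $L(\mathcal{A})^{(i)}$, and then invoke the matrix core--nilpotent theorem together with the lemma $\ind(\mathcal{A})=\max_{1\leq i\leq n_3}\ind(L(\mathcal{A})^{(i)})$. The paper instead argues intrinsically, never leaving the tensor algebra: after disposing of the trivial case $\ind(\mathcal{A})=0$, it uses $\mathcal{A}*_c\mathcal{A}^D=\mathcal{A}^D*_c\mathcal{A}$ and the idempotency of $\mathcal{I}-\mathcal{A}*_c\mathcal{A}^D$ to compute $\mathcal{N}_{\mathcal{A}}^k=\mathcal{A}^k*_c(\mathcal{I}-\mathcal{A}*_c\mathcal{A}^D)^k=\mathcal{A}^k*_c(\mathcal{I}-\mathcal{A}*_c\mathcal{A}^D)=\mathcal{A}^k-\mathcal{A}^{k+1}*_c\mathcal{A}^D=\mathcal{O}$, and then asserts $\mathcal{N}_{\mathcal{A}}^l=\mathcal{A}^l-\mathcal{A}^{l+1}*_c\mathcal{A}^D\neq\mathcal{O}$ for $l<k$. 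The paper's computation is shorter and uses only the defining Drazin identities, so it would transfer verbatim to any associative setting with a Drazin inverse and needs no transform machinery; your reduction is more mechanical but in exchange treats the index claim more carefully, since you obtain $\ind(\mathcal{N}_{\mathcal{A}})=k$ from the rank-based matrix index slice by slice, where the paper leaves the nonvanishing of $\mathcal{N}_{\mathcal{A}}^l$ for $l<k$ unproved. One shared loose end: a slice with $\ind(L(\mathcal{A})^{(i)})=0$ has zero nilpotent part, whose index is $1$ rather than $0$ under the rank definition; this does not affect the conclusion when $k\geq1$, and in the case $k=0$ the paper's own proof is equally cavalier, so it is a convention issue rather than a flaw specific to your argument.
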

\begin{proof}
When $\ind(\mathcal{A})=0$, we have $\ac$ is  invertible. Then, $\mathcal{N}_{\mathcal{A}}=\mathcal{O}$ and $\ind(\mathcal{N}_{\mathcal{A}})=0$.

When $\ind(\mathcal{A})\geq1$,
\begin{equation*}
  \mathcal{N}_{\mathcal{A}}^k =(\mathcal{A}-\mathcal{A}^2
*_c\mathcal{A}^D)^k=\mathcal{A}^k*_c(\mathcal{I}-\mathcal{A}*_c\mathcal{A}^D)^k
=\mathcal{A}^k*_c(\mathcal{I}-\mathcal{A}*_c\mathcal{A}^D)=\mathcal{A}^k-\mathcal{A}^k=\mathcal{O}.
\end{equation*}
On the other hand, $\mathcal{N}_{\mathcal{A}}^l=\mathcal{A}^l-\mathcal{A}^{l+1}*_c\mathcal{A}^D\neq\mathcal{O}$
for $l<k$. Hence, we have $\ind(\mathcal{N}_{\mathcal{A}})=k$.
\end{proof}

The $\mathcal{N}_{\mathcal{A}}$ we defined is call the {\bf{nilpotent part}} of the tensor $\mathcal{A}$.

\begin{definition}
Let $\ac \in \CC^{n_1\times n_1\times n_3}$, $\mathcal{C}_{\mathcal{A}}$ be the core part of $\mathcal{A}$ and $\mathcal{N}_{\mathcal{A}}=\mathcal{A}-\mathcal{C}_{\mathcal{A}}$. Then,
\begin{equation*}
  \ac=\mathcal{C}_{\mathcal{A}}+\mathcal{N}_{\mathcal{A}}
\end{equation*}
is called the {\bf{core-nilpotent}} decomposition of the tensor $\mathcal{A}$.
\end{definition}

\begin{theorem}{\rm \cite{WWQ}}\label{taa} Let $\mathbf{A}\in{\CC^{n_1\times n_1}}$, $\ind(\mathbf{A})=k$, and $\mathbf{A}=\mathbf{C_A}+\mathbf{N_A}$  is the core-nilpotent decomposition of $\mathbf{A}$. Then, there exists an invertible matrices $\mathbf{P}\in{\CC^{n_1\times n_1}}$ such that
\begin{eqnarray*}
\mathbf{A}=\mathbf{P}\begin{bmatrix}
  \mathbf{C} & \mathbf{O}   \\
  \mathbf{O} & \mathbf{N}   \\
\end{bmatrix}\mathbf{P}^{-1},
\end{eqnarray*} where $\mathbf{C_A}=\mathbf{P}\begin{bmatrix}
  \mathbf{C} & \mathbf{O}   \\
  \mathbf{O} & \mathbf{O}   \\
\end{bmatrix}\mathbf{P}^{-1}$, $\mathbf{N_A}=\mathbf{P}\begin{bmatrix}
  \mathbf{O} & \mathbf{O}   \\
  \mathbf{O} & \mathbf{N}   \\
\end{bmatrix}\mathbf{P}^{-1}$, $\mathbf{C}\in{\CC^{r\times r}}$, $\mathbf{N}\in{\CC^{(n_1-r)\times (n_1-r)}}$. Besides,
\begin{eqnarray*}
\mathbf{A}^D=\mathbf{P}\begin{bmatrix}
  \mathbf{C}^{-1} & \mathbf{O}   \\
  \mathbf{O} & \mathbf{O}   \\
\end{bmatrix}\mathbf{P}^{-1}.
\end{eqnarray*}.

\end{theorem}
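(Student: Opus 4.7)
The plan is to invoke Fitting's lemma (equivalently, a block-triangularization built from the Jordan canonical form) to produce the similarity matrix $\mathbf{P}$, and then verify the Drazin formula by a direct block computation.

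First, I would use the hypothesis $\ind(\mathbf{A})=k$ to obtain the Fitting decomposition $\CC^{n_1}=\R{\mathbf{A}^k}\oplus\N{\mathbf{A}^k}$, where $r:=\rk(\mathbf{A}^k)=\rk(\mathbf{A}^{k+1})$. Both summands are $\mathbf{A}$-invariant: $\R{\mathbf{A}^k}$ because it is a range, and $\N{\mathbf{A}^k}$ because $\mathbf{A}^k(\mathbf{A}\mathbf{x})=\mathbf{A}(\mathbf{A}^k\mathbf{x})=\mathbf{0}$ whenever $\mathbf{A}^k\mathbf{x}=\mathbf{0}$. On $\R{\mathbf{A}^k}$ the restriction of $\mathbf{A}$ is a bijection (the rank stability forces $\mathbf{A}$ to map $\R{\mathbf{A}^k}$ onto itself), and on $\N{\mathbf{A}^k}$ the restriction is nilpotent with nilpotency index exactly $k$.

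Second, I would choose any basis of $\R{\mathbf{A}^k}$ and any basis of $\N{\mathbf{A}^k}$, stack them into the columns of an invertible matrix $\mathbf{P}\in\CC^{n_1\times n_1}$, and read off
\begin{equation*}
\mathbf{P}^{-1}\mathbf{A}\mathbf{P}=\begin{bmatrix}\mathbf{C} & \mathbf{O}\\ \mathbf{O}& \mathbf{N}\end{bmatrix},\qquad \mathbf{C}\in\CC^{r\times r}\text{ invertible},\quad \mathbf{N}\in\CC^{(n_1-r)\times(n_1-r)}\text{ nilpotent},\ \mathbf{N}^k=\mathbf{O}.
\end{equation*}
Now define $\mathbf{X}:=\mathbf{P}\bigl[\begin{smallmatrix}\mathbf{C}^{-1}&\mathbf{O}\\\mathbf{O}&\mathbf{O}\end{smallmatrix}\bigr]\mathbf{P}^{-1}$ and check the three defining equations of the Drazin inverse blockwise: on the top-left block they reduce to the identities $\mathbf{C}^{k+1}\mathbf{C}^{-1}=\mathbf{C}^k$, $\mathbf{C}^{-1}\mathbf{C}\mathbf{C}^{-1}=\mathbf{C}^{-1}$ and $\mathbf{C}\mathbf{C}^{-1}=\mathbf{C}^{-1}\mathbf{C}=\mathbf{I}_r$; on the bottom-right block they reduce to $\mathbf{N}^k=\mathbf{O}$ together with $\mathbf{O}=\mathbf{O}$. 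Hence $\mathbf{X}=\mathbf{A}^D$ by uniqueness of the Drazin inverse.

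Third, I would recover the core and nilpotent parts from $\mathbf{A}^D$: a direct block multiplication gives
\begin{equation*}
\mathbf{A}^2\mathbf{A}^D=\mathbf{P}\begin{bmatrix}\mathbf{C}&\mathbf{O}\\ \mathbf{O}&\mathbf{O}\end{bmatrix}\mathbf{P}^{-1},\qquad \mathbf{A}-\mathbf{A}^2\mathbf{A}^D=\mathbf{P}\begin{bmatrix}\mathbf{O}&\mathbf{O}\\ \mathbf{O}&\mathbf{N}\end{bmatrix}\mathbf{P}^{-1},
\end{equation*}
which are exactly $\mathbf{C_A}$ and $\mathbf{N_A}$ by the definitions given earlier. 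This simultaneously exhibits the block shapes of $\mathbf{C_A}$ and $\mathbf{N_A}$ and the claimed formula for $\mathbf{A}^D$.

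The main obstacle I anticipate is the first step: cleanly justifying why the Fitting splitting exists and why $\mathbf{A}$ restricted to $\R{\mathbf{A}^k}$ is invertible. The cleanest argument is to note that the kernel sequence $\N{\mathbf{A}^j}$ stabilizes at $j=k$, from which $\R{\mathbf{A}^k}\cap\N{\mathbf{A}^k}=\{\mathbf{0}\}$ and hence the direct sum decomposition follows by dimension count. Once this is in hand, everything else reduces to routine $2\times2$ block algebra.
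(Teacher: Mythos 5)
Your proof is correct, but there is nothing in the paper to compare it against: Theorem \ref{taa} is stated as a known matrix result imported from \cite{WWQ} and is given no proof in this paper. Your argument — stabilization of the kernel/range chains at $j=k$ giving $\CC^{n_1}=\R{\mathbf{A}^k}\oplus\N{\mathbf{A}^k}$ with both summands $\mathbf{A}$-invariant, block-diagonalization via a basis adapted to this splitting into an invertible block $\mathbf{C}$ and a nilpotent block $\mathbf{N}$ with $\mathbf{N}^k=\mathbf{O}$, blockwise verification of the three Drazin equations followed by uniqueness of the Drazin inverse, and finally the identification $\mathbf{C_A}=\mathbf{A}^2\mathbf{A}^D$, $\mathbf{N_A}=\mathbf{A}-\mathbf{A}^2\mathbf{A}^D$ by block multiplication — is precisely the standard textbook proof (essentially the one in the cited reference), and all steps check out; the only facts you use without proof (uniqueness of the Drazin inverse, rank--nullity) are classical and unobjectionable here.
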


\begin{theorem}
Let $\ac \in \CC^{n_1\times n_1\times n_3}$ and $\ind(\mathcal{A})=k$.
Then
\begin{equation} \label{a}
\ac = \mathcal{P}*_c\Phi*_c\mathcal{P}^{-1},
\end{equation}
where $\mathcal{P} \in \CC^{n_1\times n_1\times n_3}$ is  an invertible tensor,
\begin{equation*}
  \Phi=\ten(\IDCT(
\begin{bmatrix}
\mat{\mathbf{C}_1}{\mathbf{O}}{\mathbf{O}}{\mathbf{N}_1} &  & \\ & \ddots & \\ &  & \mat{\mathbf{C}_{n_3}}{\mathbf{O}}{\mathbf{O}}{\mathbf{N}_{n_3}}
\end{bmatrix})),  \  \ \mat{\mathbf{C}_i}{\mathbf{O}}{\mathbf{O}}{\mathbf{N}_i}= \mathbf{P}^{-1}_i(\mathbf{C}_{\mathbf{A}_i}+\mathbf{N}_{\mathbf{A}_i})\mathbf{P}_i,
\end{equation*}
$\mathbf{C}_{\mathbf{A}_i}$ and $\mathbf{N}_{\mathbf{A}_i}$ are the core and nilpotent part of $L(\mathcal{A})^{(i)}$, $i=1,2,...,n_3$, respectively. Furthermore, if $rank(\mathbf{C}_i)=r$, $i=1,2,...,n_3$, then
\begin{equation*} \label{a}
\ac = \mathcal{P}*_c\mat{\mathcal{C}}{\mathcal{O}}{\mathcal{O}}{\mathcal{N}}*_c\mathcal{P}^{-1}.
\end{equation*}
Besides,
\begin{equation*} \label{a}
\ac^D = \mathcal{P} *_c\mat{\mathcal{C}^{-1}}{\mathcal{O}}{\mathcal{O}}{\mathcal{O}}*_c\mathcal{P}^{-1}.
\end{equation*}
\end{theorem}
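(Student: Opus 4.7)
The plan is to lift the matrix-level core-nilpotent decomposition (Theorem \ref{taa}) to the tensor setting one frontal slice at a time in the transform domain, exploiting $L(\mathcal{X}*_c\mathcal{Y})=L(\mathcal{X})\triangle L(\mathcal{Y})$ to convert the C-product into a face-wise product. Set $\mathbf{A}_i := L(\mathcal{A})^{(i)}$ for $i=1,\ldots,n_3$. By Lemma \ref{l91}, $\ind(\mathbf{A}_i)\le k$, so Theorem \ref{taa} applies to each $\mathbf{A}_i$, producing invertible matrices $\mathbf{P}_i\in\CC^{n_1\times n_1}$, invertible core blocks $\mathbf{C}_i\in\CC^{r_i\times r_i}$ and nilpotent blocks $\mathbf{N}_i$ satisfying $\mathbf{A}_i=\mathbf{P}_i\mat{\mathbf{C}_i}{\mathbf{O}}{\mathbf{O}}{\mathbf{N}_i}\mathbf{P}_i^{-1}$ and $\mathbf{A}_i^D=\mathbf{P}_i\mat{\mathbf{C}_i^{-1}}{\mathbf{O}}{\mathbf{O}}{\mathbf{O}}\mathbf{P}_i^{-1}$.

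Next I would assemble the tensors $\mathcal{P}=\ten(\IDCT(\diag(\mathbf{P}_1,\ldots,\mathbf{P}_{n_3})))$ and $\Phi$ as stated, and define the candidate inverse $\mathcal{P}^{-1}:=\ten(\IDCT(\diag(\mathbf{P}_1^{-1},\ldots,\mathbf{P}_{n_3}^{-1})))$. Invertibility of $\mathcal{P}$ is immediate: applying $L$ to $\mathcal{P}*_c\mathcal{P}^{-1}$ turns the C-product into block-diagonal matrix multiplication, giving $\mathbf{P}_i\mathbf{P}_i^{-1}=\mathbf{I}_{n_1}=L(\mathcal{I})^{(i)}$ for each $i$, so $\mathcal{P}*_c\mathcal{P}^{-1}=\mathcal{I}$ by Definition \ref{d9}. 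The identity $\ac=\mathcal{P}*_c\Phi*_c\mathcal{P}^{-1}$ is then verified slice-wise in the transform domain:
\[
L(\mathcal{P}*_c\Phi*_c\mathcal{P}^{-1})^{(i)}=\mathbf{P}_i\mat{\mathbf{C}_i}{\mathbf{O}}{\mathbf{O}}{\mathbf{N}_i}\mathbf{P}_i^{-1}=\mathbf{A}_i=L(\ac)^{(i)},
\]
and the invertibility of $L$ concludes. Under the uniform-rank hypothesis $\rk(\mathbf{C}_i)=r$ for all $i$, the $\mathbf{C}_i$ and $\mathbf{N}_i$ have common sizes and can be collected into tensors $\mathcal{C}\in\CC^{r\times r\times n_3}$ and $\mathcal{N}\in\CC^{(n_1-r)\times(n_1-r)\times n_3}$ via $\ten(\IDCT(\cdot))$, yielding the cleaner block-tensor form $\mat{\mathcal{C}}{\mathcal{O}}{\mathcal{O}}{\mathcal{N}}$.

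The formula for $\ac^D$ would be obtained by the same assembly but with $\mathbf{C}_i^{-1}$ replacing $\mathbf{C}_i$ and $\mathbf{O}$ replacing $\mathbf{N}_i$: letting $\mathcal{X}:=\mathcal{P}*_c\mat{\mathcal{C}^{-1}}{\mathcal{O}}{\mathcal{O}}{\mathcal{O}}*_c\mathcal{P}^{-1}$, one checks the three Drazin equations of $(\ref{mpk3})$ by passing through $L$ and noting that each frontal-slice identity $\mathbf{A}_i^{k+1}\mathbf{X}_i=\mathbf{A}_i^k$, $\mathbf{X}_i\mathbf{A}_i\mathbf{X}_i=\mathbf{X}_i$, $\mathbf{A}_i\mathbf{X}_i=\mathbf{X}_i\mathbf{A}_i$ holds by Theorem \ref{taa}. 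Uniqueness of the Drazin inverse (Lemma \ref{l91}) identifies $\mathcal{X}$ with $\ac^D$.

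The one genuine subtlety is bookkeeping: the core ranks $r_i=\rk(\mathbf{C}_{\mathbf{A}_i})$ generally depend on $i$, so without the uniform-rank hypothesis the individual $2\times 2$ block frames $\mat{\mathbf{C}_i}{\mathbf{O}}{\mathbf{O}}{\mathbf{N}_i}$ cannot be glued into a single partitioned tensor, which is exactly why the theorem offers the $\Phi$-formulation for the general case and only promotes to the partitioned form $\mat{\mathcal{C}}{\mathcal{O}}{\mathcal{O}}{\mathcal{N}}$ when the $r_i$ agree. Apart from this, everything is routine once the transform-domain slogan "tensor identities reduce to parallel matrix identities on the $L(\cdot)^{(i)}$" is applied consistently.
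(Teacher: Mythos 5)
Your proposal is correct and follows essentially the same route as the paper: apply the matrix core--nilpotent decomposition (Theorem \ref{taa}) to each frontal slice $L(\mathcal{A})^{(i)}$ in the transform domain, reassemble via $\ten(\IDCT(\cdot))$ to get $\mathcal{A}=\mathcal{P}*_c\Phi*_c\mathcal{P}^{-1}$, and collapse to the partitioned block form under the uniform-rank hypothesis. The only cosmetic difference is that you verify the three Drazin equations for the candidate and invoke uniqueness (Lemma \ref{l91}), while the paper instead applies $(\cdot)^D$ slice-wise to the block-diagonal form directly; both are immediate from Theorem \ref{taa}.
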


\begin{proof}
Suppose \begin{eqnarray*} %\label{Fa}
\DCT(\matt(\mathcal{A}))=\left[
   \begin{array}{cccc}
     L(\mathcal{A})^{(1)} &  &  &  \\
      & L(\mathcal{A})^{(2)} &  &  \\
      &  & \ddots &  \\
      &  &  & L(\mathcal{A})^{(n_3)} \\
   \end{array}
 \right].
\end{eqnarray*}
Then, by using Theorem \ref{taa}, we have
\begin{eqnarray*}
&&\begin{bmatrix} L(\mathcal{A})^{(1)} & & \\
 & \ddots & \\
 & & L(\mathcal{A})^{(n_3)} \end{bmatrix}\\
 &&=\begin{bmatrix}\mathbf{P}_1\begin{bmatrix}
  \mathbf{C}_1 & \mathbf{O}   \\
  \mathbf{O} & \mathbf{N}_1   \\
\end{bmatrix}\mathbf{P}_1^{-1} & & \\
 & \ddots & \\
 & & \mathbf{P}_{n_3}\begin{bmatrix}
  \mathbf{C}_{n_3} & \mathbf{O}   \\
  \mathbf{O} & \mathbf{N}_{n_3}   \\
\end{bmatrix}\mathbf{P}_{n_3}^{-1} \end{bmatrix}\\
&&=\begin{bmatrix}
                     \mathbf{P}_1 &   &   \\
                       & \ddots   &   \\
                       &   &  \mathbf{P}_{n_3} \\
                   \end{bmatrix}\begin{bmatrix}
                     \begin{bmatrix}
  \mathbf{C}_1 & \mathbf{O}   \\
  \mathbf{O} & \mathbf{N}_1   \\
\end{bmatrix} &   &   \\
                       & \ddots   &   \\
                       &   &  \begin{bmatrix}
  \mathbf{C}_{n_3} & \mathbf{O}   \\
  \mathbf{O} & \mathbf{N}_{n_3}   \\
\end{bmatrix} \\
                   \end{bmatrix}\begin{bmatrix}
                     \mathbf{P}^{-1}_1 &   &   \\
                       & \ddots   &   \\
                       &   &  \mathbf{P}^{-1}_{n_3} \\
                   \end{bmatrix}.
\end{eqnarray*}
Executing $\ten(\IDCT)(\cdot)$ on the tensors of the both sides of the equation, we have
$$\mathcal{A}=\mathcal{P}*_c\Phi*_c\mathcal{P}^{-1},$$
where \begin{equation*}
  \Phi=\ten(\IDCT(
\begin{bmatrix}
\mat{\mathbf{C}_1}{\mathbf{O}}{\mathbf{O}}{\mathbf{N}_1} &  & \\ & \ddots & \\ &  & \mat{\mathbf{C}_{n_3}}{\mathbf{O}}{\mathbf{O}}{\mathbf{N}_{n_3}}
\end{bmatrix})).
\end{equation*}
Again by using Theorem \ref{taa}, we have
\begin{eqnarray*}
&&\begin{bmatrix}
(L(\mathcal{A})^{(1)})^D & & \\
 & \ddots & \\
 & & (L(\mathcal{A})^{(n_3)})^D \end{bmatrix}\\
 &&=\begin{bmatrix}(\mathbf{P}_1\begin{bmatrix}
  \mathbf{C}_1 & \mathbf{O}   \\
  \mathbf{O} & \mathbf{N}_1   \\
\end{bmatrix}\mathbf{P}_1^{-1})^D & & \\
 & \ddots & \\
 & & (\mathbf{P}_{n_3}\begin{bmatrix}
  \mathbf{C}_{n_3} & \mathbf{O}   \\
  \mathbf{O} & \mathbf{N}_{n_3}   \\
\end{bmatrix}\mathbf{P}_{n_3}^{-1})^D \end{bmatrix}\\
&&=\begin{bmatrix}
                     \mathbf{P}_1 &   &   \\
                     & \ddots   &   \\
                     &   &  \mathbf{P}_{n_3} \\
                   \end{bmatrix}\begin{bmatrix}
                     \begin{bmatrix}
  \mathbf{C}_1 & \mathbf{O}   \\
  \mathbf{O} & \mathbf{N}_1   \\
\end{bmatrix}^D &   &   \\
                       & \ddots   &   \\
                       &   &  \begin{bmatrix}
  \mathbf{C}_{n_3} & \mathbf{O}   \\
  \mathbf{O} & \mathbf{N}_{n_3}   \\
\end{bmatrix}^D \\
                   \end{bmatrix}\begin{bmatrix}
                    \mathbf{P}^{-1}_1 &   &   \\
                       & \ddots   &   \\
                       &   &  \mathbf{P}^{-1}_{n_3} \\
                   \end{bmatrix}\\
&&=\begin{bmatrix}
                     \mathbf{P}_1 &   &   \\
                       & \ddots   &   \\
                       &   &  \mathbf{P}_{n_3} \\
                   \end{bmatrix}\begin{bmatrix}
                     \begin{bmatrix}
  \mathbf{C}^{-1}_1 & \mathbf{O}   \\
  \mathbf{O} & \mathbf{O}   \\
\end{bmatrix} &   &   \\
                       & \ddots   &   \\
                       &   &  \begin{bmatrix}
  \mathbf{C}^{-1}_{n_3} & \mathbf{O}   \\
  \mathbf{O} & \mathbf{O}   \\
\end{bmatrix} \\
                   \end{bmatrix}\begin{bmatrix}
                     \mathbf{P}^{-1}_1 &   &   \\
                       & \ddots   &   \\
                       &   &  \mathbf{P}^{-1}_{n_3} \\
                   \end{bmatrix}.
\end{eqnarray*}
Executing $\ten(\IDCT)(\cdot)$ on the tensors of the both sides of the equation, we have
\begin{equation*} \label{a}
\ac^D = \mathcal{P} *_c\Phi^D*_c\mathcal{P}^{-1},
\end{equation*}
where
\begin{equation*}
  \Phi^D=\ten(\IDCT(
\begin{bmatrix}
\mat{\mathbf{C}^{-1}_1}{\mathbf{O}}{\mathbf{O}}{\mathbf{O}} &  & \\ & \ddots & \\ &  & \mat{\mathbf{C}^{-1}_{n_3}}{\mathbf{O}}{\mathbf{O}}{\mathbf{O}}
\end{bmatrix})).
\end{equation*}
When $rank(\mathbf{C}_1)=rank(\mathbf{C}_2)=\cdots=rank(\mathbf{C}_{n_3})=r$, one has
\begin{eqnarray*}
% \nonumber to remove numbering (before each equation)
  \mat{\mathcal{C}}{\mathcal{O}}{\mathcal{O}}{\mathcal{N}}=\ten(\IDCT(
\begin{bmatrix}
\mat{\mathbf{C}_1}{\mathbf{O}}{\mathbf{O}}{\mathbf{N}_1} &  & \\ & \ddots & \\ &  & \mat{\mathbf{C}_{n_3}}{\mathbf{O}}{\mathbf{O}}{\mathbf{N}_{n_3}}
\end{bmatrix})), \ \text{where} \ \mathcal{C}\in\CC^{r\times r\times n_3}, \   \mathcal{N}\in\CC^{(n_1-r)\times (n_1-r)\times n_3}.
\end{eqnarray*}
Hence,
\begin{eqnarray*} \label{a}
\ac=\mathcal{P}*_c
\mat{\mathcal{C}}{\mathcal{O}}{\mathcal{O}}{\mathcal{N}}
*_c\mathcal{P}^{-1}.
\end{eqnarray*}
Since
\begin{equation*} \label{a}
\ac^D = \mathcal{P} *\ten(\IDCT(
\begin{bmatrix}
\mat{\mathbf{C}^{-1}_1}{\mathbf{O}}{\mathbf{O}}{\mathbf{O}} &  & \\ & \ddots & \\ &  & \mat{\mathbf{C}^{-1}_{n_3}}{\mathbf{O}}{\mathbf{O}}{\mathbf{O}}
\end{bmatrix}))*\mathcal{P}^{-1},
\end{equation*}
it is trivial to see
\begin{equation*}
\ac^D = \mathcal{P} *_c\mat{\mathcal{C}^{-1}}{\mathcal{O}}{\mathcal{O}}{\mathcal{O}}*_c\mathcal{P}^{-1}.
\end{equation*}
\end{proof}

\begin{theorem}
Let $\mathcal{A}\in\CC^{n_1\times n_1\times n_3}$. Suppose $\mathcal{A}$ has the C-HS decomposition.
Then,
\begin{equation*}
\ac^D = \uc *_c
\mat{(\mathcal{{S}}_r*_c\mathcal{{K}})^D}
{((\mathcal{{S}}_r*_c\mathcal{{K}})^D)^2*_c\mathcal{{S}}_r*_c\mathcal{L}}{\mathcal{O}}{\mathcal{O}}*_c\uc^H.
\end{equation*}
\end{theorem}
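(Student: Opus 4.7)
The plan is to verify the three Drazin-inverse conditions in (\ref{mpk3}) directly, after reducing to a simpler block tensor via the unitary $\mathcal{U}$ in the C-HS decomposition. Set $\mathcal{M} = \mat{\mathcal{S}_r*_c\mathcal{K}}{\mathcal{S}_r*_c\mathcal{L}}{\mathcal{O}}{\mathcal{O}}$ so that $\mathcal{A} = \mathcal{U}*_c\mathcal{M}*_c\mathcal{U}^H$. Because $\mathcal{U}*_c\mathcal{U}^H = \mathcal{I}$, we have $\mathcal{A}^n = \mathcal{U}*_c\mathcal{M}^n*_c\mathcal{U}^H$ for all $n \geq 0$, and it therefore suffices to show that $\mathcal{Y} := \mat{\mathcal{T}^D}{(\mathcal{T}^D)^2*_c\mathcal{S}_r*_c\mathcal{L}}{\mathcal{O}}{\mathcal{O}}$ is the Drazin inverse of $\mathcal{M}$, where $\mathcal{T} := \mathcal{S}_r*_c\mathcal{K}$. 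Then $\mathcal{A}^D = \mathcal{U}*_c\mathcal{Y}*_c\mathcal{U}^H$, which is the claimed expression.

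The first computational step is a straightforward induction using the block-multiplication rule recalled before the statement of the C-HS decomposition, giving
\begin{equation*}
\mathcal{M}^n = \mat{\mathcal{T}^n}{\mathcal{T}^{n-1}*_c\mathcal{S}_r*_c\mathcal{L}}{\mathcal{O}}{\mathcal{O}}, \qquad n\geq 1.
\end{equation*}
Next, I would compute $\mathcal{M}*_c\mathcal{Y}$ and $\mathcal{Y}*_c\mathcal{M}$ and check that both equal $\mat{\mathcal{T}*_c\mathcal{T}^D}{\mathcal{T}^D*_c\mathcal{S}_r*_c\mathcal{L}}{\mathcal{O}}{\mathcal{O}}$; this uses only the identity $\mathcal{T}*_c(\mathcal{T}^D)^2 = \mathcal{T}^D$ (from $\mathcal{T}^D*_c\mathcal{T}*_c\mathcal{T}^D = \mathcal{T}^D$) together with $\mathcal{T}^D*_c\mathcal{T} = \mathcal{T}*_c\mathcal{T}^D$, which gives condition~(3). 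Condition~(2), $\mathcal{Y}*_c\mathcal{M}*_c\mathcal{Y} = \mathcal{Y}$, follows by multiplying $\mathcal{Y}*_c\mathcal{M}$ on the right by $\mathcal{Y}$ and collapsing the upper-left block with $\mathcal{T}^D*_c\mathcal{T}*_c\mathcal{T}^D=\mathcal{T}^D$ and the upper-right block with the same identity applied to $(\mathcal{T}^D)^2*_c\mathcal{S}_r*_c\mathcal{L}$.

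Condition~(1), $\mathcal{M}^{k+1}*_c\mathcal{Y} = \mathcal{M}^k$, is where one must be a little careful about the index. Using the formula for $\mathcal{M}^{k+1}$ above, the product equals $\mat{\mathcal{T}^{k+1}*_c\mathcal{T}^D}{\mathcal{T}^{k+1}*_c(\mathcal{T}^D)^2*_c\mathcal{S}_r*_c\mathcal{L}}{\mathcal{O}}{\mathcal{O}}$, which simplifies to $\mat{\mathcal{T}^k}{\mathcal{T}^{k-1}*_c\mathcal{S}_r*_c\mathcal{L}}{\mathcal{O}}{\mathcal{O}} = \mathcal{M}^k$ provided both $\mathcal{T}^{k+1}*_c\mathcal{T}^D = \mathcal{T}^k$ and $\mathcal{T}^k*_c\mathcal{T}^D = \mathcal{T}^{k-1}$ hold, i.e. as soon as $k-1\geq\ind(\mathcal{T})$. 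Since the Drazin inverse is unique (by Lemma~\ref{l91}) and the three conditions may be verified at any $k\geq\ind(\mathcal{A})$, I would close the argument by picking $k$ sufficiently large — concretely $k \geq \ind(\mathcal{T})+1$ — which necessarily exceeds $\ind(\mathcal{A})$ as well.

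The main obstacle is precisely this bookkeeping with the index: showing that the Drazin identities for $\mathcal{T}$ propagate correctly through the off-diagonal block $\mathcal{T}^{k-1}*_c\mathcal{S}_r*_c\mathcal{L}$, which requires one extra power of $\mathcal{T}$ to absorb. Once that is handled by enlarging $k$, the rest is purely algebraic block manipulation using the commutativity and idempotent-like identities of $\mathcal{T}^D$.
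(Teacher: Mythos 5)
Your proposal is correct, but it argues in the opposite logical direction from the paper. The paper starts from the fact that $\ac^D$ exists and is unique (Lemma \ref{l91}), writes it as $\uc*_c\mat{\xc_1}{\xc_2}{\xc_3}{\xc_4}*_c\uc^H$, and extracts from the three equations in (\ref{mpk3}) the blockwise constraints that force $\xc_3=\xc_4=\mathcal{O}$, $\xc_1=(\mathcal{S}_r*_c\mathcal{K})^D$ and $\xc_2=((\mathcal{S}_r*_c\mathcal{K})^D)^2*_c\mathcal{S}_r*_c\mathcal{L}$ --- a ``solve for the unknown blocks'' derivation. You instead exhibit the candidate $\mathcal{Y}$ and verify the three equations for the reduced block tensor $\mathcal{M}$ directly, via the induction $\mathcal{M}^n=\mat{\mathcal{T}^n}{\mathcal{T}^{n-1}*_c\mathcal{S}_r*_c\mathcal{L}}{\mathcal{O}}{\mathcal{O}}$; your computations for conditions (2) and (3) are right, and your observation that condition (1) holds at exponent $k$ only once $k\ge\ind(\mathcal{T})+1$ (so that the $(1,2)$ block can absorb the extra power of $\mathcal{T}$) is exactly the correct bookkeeping. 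The one ingredient you use without proof is that satisfying $\mathcal{M}^{k+1}*_c\mathcal{Y}=\mathcal{M}^k$ at some exponent possibly larger than $\ind(\mathcal{M})$, together with the other two equations, still identifies $\mathcal{Y}$ as the Drazin inverse; this is the standard characterization, it reduces facewise to the matrix case through the block-diagonalization of Lemma \ref{ll2}, and the paper itself relies on the same fact implicitly when it concludes $\xc_1=(\mathcal{S}_r*_c\mathcal{K})^D$ from equations taken at $k=\ind(\ac)$, which may exceed $\ind(\mathcal{S}_r*_c\mathcal{K})$. What your route buys is a self-contained confirmation that the stated formula actually satisfies the definition (the paper's argument is a necessity computation that presupposes the block-conjugated form of the existing inverse); what the paper's route buys is that it shows no other block pattern is possible. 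In either version one also needs the routine remark, which you state, that the Drazin inverse transforms covariantly under the unitary similarity by $\uc$, immediate from the defining equations and uniqueness.
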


\begin{proof}
Let
\begin{equation*}
\mathcal{A}=\mathcal{U}*_c\left[
\begin{array}{cc}
\mathcal{S}_r*_c\mathcal{K} & \mathcal{S}_r*_c\mathcal{L} \\
\mathcal{O} & \mathcal{O} \\
\end{array}
\right]*_c\mathcal{U}^H,
\end{equation*}
where $\mathcal{S}_r, \mathcal{K}\in\CC^{r\times r\times n_3}$, $\mathcal{L}\in\CC^{r\times(n_1-r)\times n_3}$, be the C-HS decomposition of $\ac$. Suppose
\begin{equation*}
\xc= \uc *_c
\mat{\xc_1}
{\xc_2}{\xc_3}{\xc_4}*_c\uc^H,
\end{equation*}
where $\xc_1\in\CC^{r\times r\times n_3}, \xc_2\in\CC^{r\times (n_1-r)\times n_3}, \xc_3\in\CC^{(n_1-r)\times r\times n_3},
\xc_4\in\CC^{(n_1-r)\times(n_1-r)\times n_3}$, is the Drazin inverse of $\ac$. Thus, $\xc$ satisfies the three tensor equation in (\ref{mpk3}). Hence, by $\xc*_c\ac*_c\xc=\xc$, we have
\begin{equation*}
\mathcal{X}_1*_c\mathcal{S}_r*_c\mathcal{K}*_c\mathcal{X}_1+
\mathcal{X}_1*_c\mathcal{S}_r*_c\mathcal{L}*_c\mathcal{X}_3=\mathcal{X}_1,
\end{equation*}
\begin{equation*}
\mathcal{X}_1*_c\mathcal{S}_r*_c\mathcal{K}*_c\mathcal{X}_2+
\mathcal{X}_1*_c\mathcal{S}_r*_c\mathcal{L}*_c\mathcal{X}_4=\mathcal{X}_2,
\end{equation*}
\begin{equation*}
\mathcal{X}_3*_c\mathcal{S}_r*_c\mathcal{K}*_c\mathcal{X}_1+
\mathcal{X}_3*_c\mathcal{S}_r*_c\mathcal{L}*_c\mathcal{X}_3=\mathcal{X}_3,
\end{equation*}
\begin{equation*}
\mathcal{X}_3*_c\mathcal{S}_r*_c\mathcal{K}*_c\mathcal{X}_2+
\mathcal{X}_3*_c\mathcal{S}_r*_c\mathcal{L}*_c\mathcal{X}_4=\mathcal{X}_4.
\end{equation*}
By $\ac*_c\xc=\xc*_c\ac$, we have
\begin{equation*}
\mathcal{X}_1*_c\mathcal{S}_r*_c\mathcal{K}=
\mathcal{S}_r*_c\mathcal{K}*_c\mathcal{X}_1+\mathcal{S}_r*_c\mathcal{L}*_c\mathcal{X}_3,
\end{equation*}
\begin{equation*}
\mathcal{X}_1*_c\mathcal{S}_r*_c\mathcal{L}=
\mathcal{S}_r*_c\mathcal{K}*_c\mathcal{X}_2+\mathcal{S}_r*_c\mathcal{L}*_c\mathcal{X}_4,
\end{equation*}
\begin{equation*}
\mathcal{X}_3*_c\mathcal{S}_r*_c\mathcal{K}=\mathcal{O},
\end{equation*}
\begin{equation*}
\mathcal{X}_3*_c\mathcal{S}_r*_c\mathcal{L}=\mathcal{O}.
\end{equation*}
By $\mathcal{A}^{k+1}*_c\xc=\ac^k$, we have
\begin{equation*}
(\mathcal{S}_r*_c\mathcal{K})^{k+1}*_c\mathcal{X}_1+
(\mathcal{S}_r*_c\mathcal{K})^{k}*_c\mathcal{S}_r*_c\mathcal{L}*_c\mathcal{X}_3
=(\mathcal{S}_r*_c\mathcal{K})^{k},
\end{equation*}
\begin{equation*}
(\mathcal{S}_r*_c\mathcal{K})^{k+1}*_c\mathcal{X}_2+
(\mathcal{S}_r*_c\mathcal{K})^{k}*_c\mathcal{S}_r*_c\mathcal{L}*_c\mathcal{X}_4
=(\mathcal{S}_r*_c\mathcal{K})^{k-1}*_c\mathcal{S}_r*_c\mathcal{L}.
\end{equation*}
Thus, $\mathcal{X}_3=0$ and $\mathcal{X}_4=0$. In addition,
\begin{equation*}\label{bb}
  \mathcal{X}_1*_c\mathcal{S}_r*_c\mathcal{K}*_c\mathcal{X}_1=\mathcal{X}_1, \
  \mathcal{X}_1*_c\mathcal{S}_r*_c\mathcal{K}=\mathcal{S}_r*_c\mathcal{K}*_c\mathcal{X}_1, \
(\mathcal{S}_r*_c\mathcal{K})^{k+1}*_c\mathcal{X}_1=(\mathcal{S}_r*_c\mathcal{K})^{k},
\end{equation*}
which implies $\mathcal{X}_1=(\mathcal{S}_r*_c\mathcal{K})^D$. Moreover,
\begin{equation*}\label{bb}
  \mathcal{X}_1*_c\mathcal{S}_r*_c\mathcal{K}*_c\mathcal{X}_2=\mathcal{X}_2, \
  \mathcal{X}_1*_c\mathcal{S}_r*_c\mathcal{L}=\mathcal{S}_r*_c\mathcal{K}*_c\mathcal{X}_2,
\end{equation*}
which implies $\mathcal{X}_2=((\mathcal{S}_r*_c\mathcal{K})^D)^2*_c\mathcal{S}_r*_c\mathcal{L}$. Therefore,
\begin{equation*}
\ac^D = \uc *_c
\mat{(\mathcal{{S}}_r*_c\mathcal{{K}})^D}
{((\mathcal{{S}}_r*_c\mathcal{{K}})^D)^2*_c\mathcal{{S}}_r*_c\mathcal{L}}{\mathcal{O}}{\mathcal{O}}*_c\uc^H.
\end{equation*}
\end{proof}

\subsection{The algorithm for computing the Drazin inverse of a tensor}

In the following, we construct an algorithm to compute the Drazin inverse of a tensor based on Theorem \ref{1t1}.

\begin{algorithm}[H]
\caption{\textsc{Compute the Drazin inverse of a tensor $\mathcal{A}$}}\label{algo102}
\KwIn{$n_1\times n_2\times n_3$ tensor $\mathcal{A}$}
\KwOut{$n_2\times n_1\times n_3$ tensor $\mathcal{X}$}
\begin{enumerate}
\addtolength{\itemsep}{-0.8\parsep minus 0.8\parsep}

\item $\widehat{\mathcal{A}} = L(\mathcal{A})=\mathcal{A}\times_3\mathbf{M}$, where $\mathbf{M}$ is defined in (\ref{l201})

\item $k=\max\limits_{1\leq i\leq n_3}\{\ind(\widehat{\mathcal{A}}^{(i)})\}$

\item $\widehat{\mathcal{B}} = L(\mathcal{A}^k)=\mathcal{A}^k\times_3\mathbf{M}$,
$\widehat{\mathcal{C}} = L(\mathcal{A}^{2k+1})=\mathcal{A}^{2k+1}\times_3\mathbf{M}$

\item  for $i=1,\ldots, n_3$

\qquad  $\widehat{\mathcal{H}}^{(i)}=\pinv(\widehat{\mathcal{C}}^{(i)})$; where
$\pinv(\widehat{\mathcal{C}}^{(i)})$ is the Moore-Penrose inverse of $\widehat{\mathcal{C}}^{(i)}$

end

\item

for $i=1,\ldots, n_3$

\quad $\widehat{\mathcal{X}}^{(i)}=\widehat{\mathcal{B}}^{(i)}
\widehat{\mathcal{H}}^{(i)}\widehat{\mathcal{B}}^{(i)}$

end

\item  ${\mathcal{X}} = L^{-1}(\widehat{\mathcal{X}})=\widehat{\mathcal{X}}\times_3\mathbf{M}^{-1}$
\end{enumerate}
\end{algorithm}

\begin{example}
Let $\ac \in \CC^{3\times 3\times 3}$ with frontal slices
$$\ac^{(1)}=\begin{bmatrix}
    2  & 0 & 0\\
    1  & 3 & 0\\
    0  & 0 & 0\\
  \end{bmatrix},\quad
\ac^{(2)}=\begin{bmatrix}
    1  & 3 & 3\\
    0  & 4 & 5\\
    3  & 0 & 0\\
  \end{bmatrix},\quad
\ac^{(3)}=\begin{bmatrix}
    3  & 2 & 0\\
    0  & 1 & 3\\
    2  & 0 & 1\\
  \end{bmatrix}.$$
Then, by using Algorithm \ref{algo102}, we have
\begin{eqnarray*}
(\ac^D)^{(1)}=\begin{bmatrix}
     0.0007  &  0.0123  & -0.1008 \\
     -0.1030 &  0.0358  & 0.0223\\
     -0.0036 &  -0.0617 &   0.0042 \\
  \end{bmatrix},\quad
(\ac^D)^{(2)}=\begin{bmatrix}
     0.2056  & -0.0473  &  0.6283 \\
     0.0145  &  0.0637 &  -0.1531\\
     0.1721  &  0.0365 &   0.0585 \\
  \end{bmatrix},
  \end{eqnarray*}
\begin{eqnarray*}
(\ac^D)^{(3)}=\begin{bmatrix}
     -0.1937 &  0.0317  & -0.5392 \\
     0.1115  &  -0.1005 &   0.0693 \\
     -0.2316 &  0.0415  &  -0.0040 \\
  \end{bmatrix}.
\end{eqnarray*}
\end{example}

%\begin{definition}\label{d501}
%Let $\ac \in \CC^{n_1\times n_1\times n_3}$. If there exists a tensor
%$\xc\in\mathbb{H}^{n_1\times n_1\times n_3}$ such that
%\begin{equation*} %\label{mp2}
%\ac*_c\xc*_c\ac = \ac, \qquad \xc*_c\ac*_c\xc = \xc, \qquad \ac*_c\xc = \xc*_c\ac
%\end{equation*}
%then $\xc$ is called the {\bf{group inverse}} of the tensor $\ac$ and is denoted by $\ac^\#$.
%\end{definition}

%By Lemma \ref{l91} and Definition \ref{d501}, we can get the following result.

%\begin{lemma} % \label{2ln1}
%Let $\ac \in \CC^{n_1\times n_1\times n_3}$.
%If $\ind(\mathcal{A}) = 1$, then the group inverse of the tensor $\ac$ exists and unique.
%\end{lemma}

% % % % % % % % % % % % % % % % % %

\section{The inverse along a tensor under the C-product}\label{four}

In this section, we firstly define the inverse along a tensor under the C-product and then give some representations of this inverse. Moreover, an algorithm is built to compute the inverse along a tensor.

\subsection{The expressions of the inverse along a tensor}

\begin{definition} % \label{d53}
Let $\ac \in \CC^{n_1\times n_2\times n_3}$ and
$\gc \in \CC^{n_2\times n_1\times n_3}$. If there exist tensors
$\xc \in \CC^{n_2\times n_1\times n_3}$,
$\uc \in \CC^{n_1\times n_1\times n_3}$ and
$\vc \in \CC^{n_2\times n_2\times n_3}$ such that
\begin{equation} \label{matrizmary}
\xc*_c\ac*_c\gc = \gc, \qquad \gc*_c\ac*_c\xc = \gc, \qquad \xc = \gc*_c\uc, \qquad \xc^H = \gc^H*_c\vc,
\end{equation}
then $\xc$ is called the \textbf{inverse along $\gc$} and is denoted by $\mary{\ac}{\gc}$.
\end{definition}

\begin{theorem}
Let $\ac \in \CC^{n_1\times n_2\times n_3}$ and
$\gc \in \CC^{n_2\times n_1\times n_3}$.
If $\ac$ is invertible along $\gc$, then the inverse of $\ac$ along $\gc$ is unique.
\end{theorem}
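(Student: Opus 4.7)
The plan is to suppose two tensors $\xc_1$ and $\xc_2$ both satisfy the defining equations $(\ref{matrizmary})$, with associated auxiliary tensors $\uc_1,\vc_1$ and $\uc_2,\vc_2$ respectively, and then show $\xc_1 = \xc_2$ by sandwiching the common quantity $\xc_1*_c\ac*_c\xc_2$ between two different representations.

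First, I would exploit the ``column factorization'' $\xc_2 = \gc*_c\uc_2$ together with the cancellation identity $\xc_1*_c\ac*_c\gc = \gc$ to compute
\begin{equation*}
\xc_1*_c\ac*_c\xc_2 \;=\; \xc_1*_c\ac*_c\gc*_c\uc_2 \;=\; \gc*_c\uc_2 \;=\; \xc_2.
\end{equation*}
Next, using the ``row factorization'' $\xc_1^H = \gc^H*_c\vc_1$ together with the lemma $(\ac*_c\bc)^H = \bc^H*_c\ac^H$ and the fact that $(\gc^H)^H = \gc$, I get $\xc_1 = \vc_1^H*_c\gc$; combined with the cancellation identity $\gc*_c\ac*_c\xc_2 = \gc$ this yields
\begin{equation*}
\xc_1*_c\ac*_c\xc_2 \;=\; \vc_1^H*_c\gc*_c\ac*_c\xc_2 \;=\; \vc_1^H*_c\gc \;=\; \xc_1.
\end{equation*}
Equating the two computations gives $\xc_1 = \xc_2$, which is the desired uniqueness.

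There is no real obstacle here, since the argument is structurally identical to the uniqueness proof of the classical inverse along an element in a ring; the only point that needs the C-product machinery explicitly is the step $\xc_1^H = \gc^H*_c\vc_1 \Longrightarrow \xc_1 = \vc_1^H*_c\gc$, which relies on the anti-multiplicative and involutive properties of the conjugate transpose already established for the C-product in the preliminaries.
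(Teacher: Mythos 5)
Your proof is correct and follows essentially the same argument as the paper: both use the factorizations $\xc_i = \gc*_c\uc_i$ and $\xc_i = \vc_i^H*_c\gc$ together with the cancellation identities $\xc_i*_c\ac*_c\gc = \gc$ and $\gc*_c\ac*_c\xc_i = \gc$ to identify the mixed product $\xc_i*_c\ac*_c\xc_j$ with both inverses (the paper sandwiches $\xc_2*_c\ac*_c\xc_1$ instead of $\xc_1*_c\ac*_c\xc_2$, which is just a relabeling). No issues.
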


\begin{proof}
Let $\xc_1,\xc_2 \in \CC^{n_2\times n_1\times n_3}$ be two inverses of $\ac$ along $\gc$.
There exist tensors $\uc_1,\uc_2,\vc_1,\vc_2 $ of adequate size such that
\begin{equation*}
\xc_{i}*_c\ac*_c\gc = \gc, \qquad \gc*_c\ac*_c\xc_{i} = \gc, \qquad \xc_{i} = \gc*_c\uc_{i}, \qquad \xc_{i}^H = \gc^H*_c\vc_{i},
\end{equation*}
for $i=1,2$. Now we have
\begin{equation*}
\xc_1=\gc*_c\uc_1=\xc_2*_c\ac*_c\gc*_c\uc_1=\xc_2*_c\ac*_c\xc_1=\vc_2^H*_c\gc*_c\ac*_c\xc_1=\vc_2^H*_c\gc=\xc_2,
\end{equation*}
The proof is finished.
\end{proof}

\begin{theorem}\label{tt10}
Let $\ac \in \CC^{n_1\times n_2\times n_3}$,
$\gc \in\CC^{n_2\times n_1\times n_3}$. If $\mathcal{A}$ is invertible along $\gc $, then
\begin{equation*}
  \mary{\mathcal{A}}{\mathcal{G}}=\mathcal{G}*_c(\mathcal{G}*_c\mathcal{A}*_c\mathcal{G})^\dag *_c\mathcal{G}.
\end{equation*}
\end{theorem}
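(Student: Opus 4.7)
The plan is to verify directly that $\xc := \gc *_c (\gc *_c \ac *_c \gc)^\dag *_c \gc$ satisfies the four defining equations in (\ref{matrizmary}), and then invoke the uniqueness already established in the preceding theorem. Write $\mathcal{H} := \gc *_c \ac *_c \gc$ for brevity, so that $\xc = \gc *_c \mathcal{H}^\dag *_c \gc$. The last two conditions in (\ref{matrizmary}) are essentially free: taking $\uc := \mathcal{H}^\dag *_c \gc$ immediately gives $\xc = \gc *_c \uc$, and taking $\vc := (\mathcal{H}^\dag)^H *_c \gc^H$ together with the identity $(\ac*_c\bc)^H = \bc^H*_c\ac^H$ yields $\xc^H = \gc^H *_c \vc$. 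The work is therefore concentrated in the first two equations.

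To handle the first two equations I plan to extract two factorizations of $\gc$ from the hypothesis that $\ac$ is invertible along $\gc$. Let $\yc$ denote such an inverse, with $\yc = \gc *_c \uc_0$ and $\yc^H = \gc^H *_c \vc_0$ for some $\uc_0, \vc_0$. Substituting $\yc = \gc *_c \uc_0$ into $\gc *_c \ac *_c \yc = \gc$ gives $\mathcal{H} *_c \uc_0 = \gc$, which exhibits $\gc$ as a right multiple of $\mathcal{H}$. Dually, since $\yc^H = \gc^H *_c \vc_0$ gives $\yc = \vc_0^H *_c \gc$, substituting into $\yc *_c \ac *_c \gc = \gc$ yields $\vc_0^H *_c \mathcal{H} = \gc$, exhibiting $\gc$ as a left multiple of $\mathcal{H}$.

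With these two identities in hand, the remaining two equations follow from the Moore–Penrose relation $\mathcal{H} *_c \mathcal{H}^\dag *_c \mathcal{H} = \mathcal{H}$ (equation (1) of (\ref{mp}) for $\mathcal{H}$). Indeed,
\begin{equation*}
\xc *_c \ac *_c \gc = \gc *_c \mathcal{H}^\dag *_c \mathcal{H} = \vc_0^H *_c \mathcal{H} *_c \mathcal{H}^\dag *_c \mathcal{H} = \vc_0^H *_c \mathcal{H} = \gc,
\end{equation*}
and symmetrically
\begin{equation*}
\gc *_c \ac *_c \xc = \mathcal{H} *_c \mathcal{H}^\dag *_c \gc = \mathcal{H} *_c \mathcal{H}^\dag *_c \mathcal{H} *_c \uc_0 = \mathcal{H} *_c \uc_0 = \gc.
\end{equation*}
Thus all four conditions of (\ref{matrizmary}) hold for $\xc$, and by the uniqueness theorem just proved, $\mary{\ac}{\gc} = \xc$.

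The only subtle point, and the step I expect to be the main obstacle to present cleanly, is recognising that one must use the existence of $\yc$ only to extract the two sandwich identities $\gc = \mathcal{H} *_c \uc_0$ and $\gc = \vc_0^H *_c \mathcal{H}$; nothing else about $\yc$ is needed, and in particular one should avoid trying to compute $\yc$ explicitly from the Moore–Penrose inverse of $\mathcal{H}$. Once those two identities are in place the verification is purely formal and does not require any properties of the C-product beyond associativity, distributivity, and the defining equations of $\mathcal{H}^\dag$, all of which were established earlier in the paper.
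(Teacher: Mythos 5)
Your proof is correct, and it takes a genuinely different route from the paper. The paper proves the formula by passing to the transform domain: it writes $\DCT(\matt(\mathcal{A}))$ and $\DCT(\matt(\mathcal{G}))$ as block-diagonal matrices with blocks $L(\mathcal{A})^{(i)}$, $L(\mathcal{G})^{(i)}$, invokes the matrix-level result of Ben\'itez--Boasso--Jin (cited as \cite{bbj}) to get $\mary{\overline{\mathbf{A}}_i}{\overline{\mathbf{G}}_i}=\overline{\mathbf{G}}_i(\overline{\mathbf{G}}_i\overline{\mathbf{A}}_i\overline{\mathbf{G}}_i)^\dag\overline{\mathbf{G}}_i$ slice by slice, and then reassembles with $\ten(\IDCT(\cdot))$. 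You instead verify the four equations of (\ref{matrizmary}) directly for $\xc=\gc*_c(\gc*_c\ac*_c\gc)^\dag*_c\gc$, extracting from the hypothesis only the two sandwich identities $\gc=\mathcal{H}*_c\uc_0$ and $\gc=\vc_0^H*_c\mathcal{H}$ (where $\mathcal{H}=\gc*_c\ac*_c\gc$) and then using $\mathcal{H}*_c\mathcal{H}^\dag*_c\mathcal{H}=\mathcal{H}$ together with the uniqueness theorem proved just before. Your argument is essentially ring-theoretic: it needs only associativity, the reversal law for $(\cdot)^H$, existence of the Moore--Penrose inverse of $\mathcal{H}$ (Theorem \ref{t2}), and uniqueness of the inverse along $\gc$, so it is self-contained within the paper and would work verbatim in any $*$-semigroup where $\mathcal{H}^\dag$ exists; in particular it does not import the matrix theorem from \cite{bbj}. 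The paper's transform-domain approach, on the other hand, matches the methodology used throughout (and in Theorem \ref{15t} it additionally yields slice-wise existence criteria), at the cost of leaning on the external matrix result. One small point to make explicit when writing yours up: you use $(\gc^H*_c\vc_0)^H=\vc_0^H*_c\gc$, i.e.\ $(\ac^H)^H=\ac$ together with the reversal law, which is immediate from the definition of the conjugate transpose but worth stating since the paper only records the reversal law as a lemma.
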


\begin{proof}
Suppose \begin{eqnarray*} %\label{Fa}
\DCT(\matt(\mathcal{A}))=\left[
   \begin{array}{cccc}
     L(\mathcal{A})^{(1)} &  &  \\
      & \ddots &  \\
      &  & L(\mathcal{A})^{(n_3)} \\
   \end{array}
 \right]
\end{eqnarray*}
and\begin{eqnarray*} %\label{Fa}
\DCT(\matt(\mathcal{G}))=\left[
   \begin{array}{cccc}
     L(\mathcal{G})^{(1)} &  &  \\
      & \ddots &  \\
      &  & L(\mathcal{G})^{(n_3)} \\
   \end{array}
 \right].
\end{eqnarray*}
Let $\mathbf{\overline{A}}_{i}=L(\ac)^{(i)}$ and $\mathbf{\overline{G}}_{i}=L(\gc)^{(i)}$. By \cite{bbj}, we have
$$\mary{\mathbf{\overline{A}}_{i}}{\mathbf{\overline{G}}_{i}}=
\mathbf{\overline{G}}_{i}(\mathbf{\overline{G}}_{i}\mathbf{\overline{A}}_{i}
\mathbf{\overline{G}}_{i})^\dag\mathbf{\overline{G}}_{i}, \  i=1,2,...,n_3.$$
Then,
\begin{align*} %\label{Fa}
\DCT(\matt(\mary{\ac}{\gc}))
&= \begin{bmatrix}
     \mary{\mathbf{\overline{A}}_{1}}{\mathbf{\overline{G}}_{1}}   &  \\
      & \ddots &  \\
      &  & \mary{\mathbf{\overline{A}}_{n_3}}{\mathbf{\overline{G}}_{n_3}} \\
\end{bmatrix}\\
&=\begin{bmatrix}
     \mathbf{\overline{G}}_{1}(\mathbf{\overline{G}}_{1}
     \mathbf{\overline{A}}_{1}\mathbf{\overline{G}}_{1})^\dag{\mathbf{\overline{G}}_{1}} &  &  \\
      & \ddots &  \\
      &  & \mathbf{\overline{G}}_{n_3}(\mathbf{\overline{G}}_{n_3}\mathbf{\overline{A}}_{n_3}
      \mathbf{\overline{G}}_{n_3})^\dag{\mathbf{\overline{G}}_{n_3}} \\
\end{bmatrix}\\
&=\begin{bmatrix}
                     \mathbf{\overline{G}}_{1} &   &   \\
                       & \ddots   &   \\
                       &   &  \mathbf{\overline{G}}_{n_3} \\
                   \end{bmatrix}
\begin{bmatrix}
                     (\mathbf{\overline{G}}_{1}\mathbf{\overline{A}}_{1}
                     \mathbf{\overline{G}}_{1})^\dag &   &   \\
                       & \ddots   &   \\
                       &   &  (\mathbf{\overline{G}}_{n_3}\mathbf{\overline{A}}_{n_3}
                       \mathbf{\overline{G}}_{n_3})^\dag \\
                   \end{bmatrix}
                   \begin{bmatrix}
                     \mathbf{\overline{G}}_{1} &   &   \\
                       & \ddots   &   \\
                       &   &  \mathbf{\overline{G}}_{n_3} \\
                   \end{bmatrix}.
\end{align*}
Therefore, implementing $\ten(\IDCT)(\cdot)$ on both sides of the equation above, we get
$\mary{\mathcal{A}}{\mathcal{G}}=\mathcal{G}*_c(\mathcal{G}*_c\mathcal{A}*_c\mathcal{G})^\dag *_c\mathcal{G}$.
\end{proof}

\begin{theorem}\label{tt8}
Let $\ac \in \CC^{n_1\times n_2\times n_3}$,
$\gc \in \CC^{n_2\times n_1\times n_3}$. Suppose $\mathcal{G}=\mathcal{M}*_c\mathcal{N}$
is the C-full rank decomposition of $\mathcal{G}$. If $\mathcal{A}$ is invertible along $\mathcal{G}$, then
\begin{equation*}
  \mary{\mathcal{A}}{\mathcal{G}}=\mathcal{M}*_c
  (\mathcal{N}*_c\mathcal{A}*_c\mathcal{M})^{-1}*_c\mathcal{N}.
\end{equation*}
\end{theorem}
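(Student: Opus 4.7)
The plan is to reduce the claim to Theorem \ref{tt10}, which already supplies
$$\mary{\ac}{\gc} = \gc *_c (\gc *_c \ac *_c \gc)^\dag *_c \gc,$$
and then simplify the right-hand side by exploiting the C-full rank decomposition $\gc = \mc *_c \nc$. Substituting gives
$$\mary{\ac}{\gc} = \mc *_c \nc *_c (\mc *_c \nc *_c \ac *_c \mc *_c \nc)^\dag *_c \mc *_c \nc,$$
so the whole task is to show that the middle pseudo-inverse collapses to $\nc^\dag *_c (\nc *_c \ac *_c \mc)^{-1} *_c \mc^\dag$, after which the outer $\mc *_c \nc$ factors absorb the $\nc^\dag$ and $\mc^\dag$ via $\nc *_c \nc^\dag = \ic_r$ and $\mc^\dag *_c \mc = \ic_r$.

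To justify that collapse, I would pass to the transformed domain and work frontal-slice-by-frontal-slice, exactly as in the proof of Theorem \ref{tt10}. Setting $\mathbf{M}_i = L(\mc)^{(i)}$, $\mathbf{N}_i = L(\nc)^{(i)}$ and $\mathbf{A}_i = L(\ac)^{(i)}$, the definition of the C-full rank decomposition forces $\mathbf{M}_i$ to have full column rank $r$ and $\mathbf{N}_i$ to have full row rank $r$. Then $\DCT(\matt(\cdot))$ turns the tensor identity into the block-diagonal matrix identity
$$(\mathbf{M}_i\mathbf{N}_i\mathbf{A}_i\mathbf{M}_i\mathbf{N}_i)^\dag = \mathbf{N}_i^\dag (\mathbf{N}_i\mathbf{A}_i\mathbf{M}_i)^{-1}\mathbf{M}_i^\dag,$$
which is a standard reverse-order-law: factor as $\bigl(\mathbf{M}_i(\mathbf{N}_i\mathbf{A}_i\mathbf{M}_i)\bigr)\mathbf{N}_i$ where the left factor has full column rank $r$ and the right factor has full row rank $r$. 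Reassembling with $\ten(\IDCT(\cdot))$ and using $\mathbf{N}_i\mathbf{N}_i^\dag = \mathbf{I}_r$, $\mathbf{M}_i^\dag\mathbf{M}_i = \mathbf{I}_r$ yields the claimed formula.

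The main obstacle is justifying that $\nc *_c \ac *_c \mc$ is invertible in the first place; equivalently, that each $\mathbf{N}_i\mathbf{A}_i\mathbf{M}_i$ is invertible. This is where the hypothesis that $\ac$ is invertible along $\gc$ has to do real work: as in the proof of Theorem \ref{tt10}, the existence of $\mary{\ac}{\gc}$ forces each $\mathbf{A}_i$ to be invertible along $\mathbf{G}_i = \mathbf{M}_i\mathbf{N}_i$ in the matrix sense, and for a full rank matrix decomposition this is classically equivalent to $\mathbf{N}_i\mathbf{A}_i\mathbf{M}_i$ being invertible (see, e.g., \cite{bbj}). Once this is in hand the rest is essentially the same DCT-slice calculation used throughout Section \ref{three}, so the only routine checking left would be the direct verification that the candidate tensor satisfies the four defining equations of $(\ref{matrizmary})$, which can alternatively be carried out purely tensorially without passing through slices.
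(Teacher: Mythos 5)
Your proposal is correct, but it takes a noticeably different route from the paper. The paper's proof passes immediately to the transformed domain: writing $\mathbf{A}_i=L(\mathcal{A})^{(i)}$ and $\mathbf{G}_i=L(\mathcal{G})^{(i)}=\mathbf{M}_i\mathbf{N}_i$ (a full rank factorization of each slice, by the definition of the C-full rank decomposition), it simply quotes from \cite{bbj} the matrix formula $\mary{\mathbf{A}_i}{\mathbf{G}_i}=\mathbf{M}_i(\mathbf{N}_i\mathbf{A}_i\mathbf{M}_i)^{-1}\mathbf{N}_i$ slice by slice, assembles the block-diagonal factors, and applies $\ten(\IDCT(\cdot))$. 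You instead reduce to the already-proved Theorem \ref{tt10}, substitute $\mathcal{G}=\mathcal{M}*_c\mathcal{N}$, and collapse $(\mathcal{G}*_c\mathcal{A}*_c\mathcal{G})^\dag$ slice-wise via the reverse-order law for a (full column rank)$\times$(full row rank) factorization; that computation is sound, since $\mathbf{M}_i\mathbf{N}_i\mathbf{A}_i\mathbf{M}_i\mathbf{N}_i=\bigl(\mathbf{M}_i(\mathbf{N}_i\mathbf{A}_i\mathbf{M}_i)\bigr)\mathbf{N}_i$ with the left factor of full column rank $r$, and $\mathbf{N}_i\mathbf{N}_i^\dag=\mathbf{M}_i^\dag\mathbf{M}_i=\mathbf{I}_r$ absorbs the remaining daggers. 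You also correctly isolate the only nontrivial point, the invertibility of $\mathcal{N}*_c\mathcal{A}*_c\mathcal{M}$, and your justification (apply $L$ to the four equations of (\ref{matrizmary}) to see each $\mathbf{A}_i$ is invertible along $\mathbf{G}_i$, then use the classical full-rank-factorization criterion from \cite{bbj}) is precisely the slice transfer the paper uses implicitly. What your route buys is self-containedness of the formula itself: everything beyond the existence criterion follows from Theorem \ref{tt10} and elementary Moore--Penrose identities, and, as you note, one could even finish purely tensorially by checking the four equations of (\ref{matrizmary}) for $\mathcal{M}*_c(\mathcal{N}*_c\mathcal{A}*_c\mathcal{M})^{-1}*_c\mathcal{N}$ and invoking uniqueness. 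What the paper's route buys is brevity, since the matrix analogue of the entire statement is cited wholesale rather than rederived.
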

\begin{proof}
Let \begin{eqnarray*} %\label{Fa}
\DCT(\matt(\mathcal{A}))=\left[
   \begin{array}{cccc}
     L(\mathcal{A})^{(1)} &  &  \\
      & \ddots &  \\
      &  & L(\mathcal{A})^{(n_3)} \\
   \end{array}
 \right]
\end{eqnarray*}
and\begin{eqnarray*} %\label{Fa}
\DCT(\matt(\mathcal{G}))=\left[
   \begin{array}{cccc}
     L(\mathcal{G})^{(1)} &  &  \\
      & \ddots &  \\
      &  & L(\mathcal{G})^{(n_3)} \\
   \end{array}
 \right].
\end{eqnarray*}
On the other hand,
$$\DCT(\matt(\mathcal{M}*_c\mathcal{N}))=\begin{bmatrix}
             L(\mathcal{M})^{(1)}L(\mathcal{N})^{(1)}  &  &  \\
             &  \ddots &  \\
             &  &  L(\mathcal{M})^{(n_3)}L(\mathcal{N})^{(n_3)}  \\
             \end{bmatrix}.$$
Let $\mathbf{\overline{A}}_i = L(\mathcal{A})^{(i)}$, $\mathbf{\overline{G}}_i = L(\mathcal{G})^{(i)}$, $\mathbf{\overline{M}}_i = L(\mathcal{M})^{(i)}$, $\mathbf{\overline{N}}_i = L(\mathcal{N})^{(i)}$, $i=1,2,..., n_3$.
Thus, we have $\mathbf{\overline{G}}_i=\mathbf{\overline{M}}_i\mathbf{\overline{N}}_i, i=1,2,...n_3$,
which are the full rank decomposition of $\mathbf{\overline{G}}_i$.
By \cite{bbj}, we have
$$\mary{\mathbf{\overline{A}}_i}{\mathbf{\overline{G}}_i}
={\mathbf{\overline{M}}_i}({\mathbf{\overline{N}}_i}
      {\mathbf{\overline{A}}_i}{\mathbf{\overline{M}}_i})^{-1}{\mathbf{\overline{N}}_i}, \
i=1,2,...,n_3.$$
Therefore,
\begin{align*} %\label{Fa}
\DCT(\matt(\mary{\mathcal{A}}{\mathcal{G}})
&= \begin{bmatrix}
     \mary{\mathbf{\overline{A}}_1}{\mathbf{\overline{G}}_1} &  &  \\
      & \ddots &  \\
      &  & \mary{\mathbf{\overline{A}}_{n_3}}{\mathbf{\overline{G}}_{n_3}} \\
\end{bmatrix}\\
&=\begin{bmatrix}
      {\mathbf{\overline{M}}_1}({\mathbf{\overline{N}}_1}
      {\mathbf{\overline{A}}_1}{\mathbf{\overline{M}}_1})^{-1}{\mathbf{\overline{N}}_1} &  &  \\
      & \ddots &  \\
      &  & {\mathbf{\overline{M}}_{n_3}}({\mathbf{\overline{N}}_{n_3}}
      {\mathbf{\overline{A}}_{n_3}}{\mathbf{\overline{M}}_{n_3}})^{-1}{\mathbf{\overline{N}}_{n_3}} \\
\end{bmatrix}\\
&=\begin{bmatrix}
                     {\mathbf{\overline{M}}_1} &   &   \\
                       & \ddots   &   \\
                       &   &  {\mathbf{\overline{M}}_{n_3}} \\
                   \end{bmatrix}
                   \begin{bmatrix}
                     ({\mathbf{\overline{N}}_1}{\mathbf{\overline{A}}_1}
                     {\mathbf{\overline{M}}_1})^{-1} &   &   \\
                       & \ddots   &   \\
                       &   &  ({\overline{N}_{n_3}}{\mathbf{\overline{A}}_{n_3}}
                       {\mathbf{\overline{M}}_{n_3}})^{-1} \\
                   \end{bmatrix}
                   \begin{bmatrix}
                     {\mathbf{\overline{N}}_1} &   &   \\
                       & \ddots   &   \\
                       &   &  {\mathbf{\overline{N}}_{n_3}} \\
                   \end{bmatrix}.
\end{align*}
Performing $\ten(\IDCT)(\cdot)$ on both sides of the equation above, one has
$\mary{\mathcal{A}}{\mathcal{G}}=\mathcal{M}*_c
(\mathcal{N}*_c\mathcal{A}*_c\mathcal{M})^{-1}*_c\mathcal{N}.$
\end{proof}

\begin{theorem}\label{15t}
Let $\ac \in \CC^{n_1\times n_2\times n_3}$,
$\gc \in\CC^{n_2\times n_1\times n_3}$ and
\begin{equation} \label{d}
\gc = \uc*_c\scc*_c\vc^H,
\end{equation}
be the C-SVD of $\gc$. Suppose that $rank\left(L(\scc^{(i)})\right)=r_i$, $i=1,2,...,n_3$. If $\ac$ is represented as
\begin{equation} \label{a}
\ac = \vc *_c \ten(\IDCT(
\begin{bmatrix}
\mat{\mathbf{X}_1}{\bigstar}{\bigstar}{\bigstar} &  & \\ & \ddots & \\ &  & \mat{\mathbf{X}_{n_3}}{\bigstar}{\bigstar}{\bigstar}
\end{bmatrix}))*_c\uc^H,
\end{equation}
where $\mathbf{X}_i \in \CC^{r_i\times r_i}$, $i=1,2,\ldots,n_3$,
then $\mary{\ac}{\gc}$ exists if and only if $\mathbf{X}_i$, $i=1,2,...,n_3$, are nonsingular. In particular, if $rank\left(L(\scc^{(i)})\right)=r$, $i=1,2,...,n_3$, then
\begin{equation*} %\label{jad}
\mary{\ac}{\gc} = \uc*_c\mat{\mathcal{X}^{-1}}{\mathcal{O}}{\mathcal{O}}{\mathcal{O}}
*_c\mathcal{V}^H.
\end{equation*}
\end{theorem}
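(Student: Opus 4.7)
The plan is to reduce everything to the DCT domain via $L$ and then invoke the known matrix characterization of the inverse along an element. Since $\DCT(\matt(\cdot))$ produces a block-diagonal matrix whose blocks are the frontal slices $L(\cdot)^{(i)}$, the tensor identities defining $\mary{\ac}{\gc}$ decouple into $n_3$ independent matrix problems, one for each $i\in\{1,\dots,n_3\}$. I would first make this reduction precise and then translate the block-structure hypothesis on $\ac$ into a slice-wise hypothesis on $L(\ac)^{(i)}$.

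First, writing $L(\gc)^{(i)}=\mathbf{U}_i\mathbf{\Sigma}_i\mathbf{V}_i^{H}$ with $\mathbf{\Sigma}_i=\begin{bmatrix}\mathbf{\Sigma}_i^{(1)}&0\\0&0\end{bmatrix}$ and $\mathbf{\Sigma}_i^{(1)}\in\CC^{r_i\times r_i}$ invertible, the representation (\ref{a}) of $\ac$ gives
\begin{equation*}
L(\ac)^{(i)}=\mathbf{V}_i\begin{bmatrix}\mathbf{X}_i&\bigstar\\\bigstar&\bigstar\end{bmatrix}\mathbf{U}_i^{H}.
\end{equation*}
A direct block multiplication, using that $\mathbf{\Sigma}_i$ annihilates everything outside the top-left block, yields
\begin{equation*}
L(\gc)^{(i)}L(\ac)^{(i)}L(\gc)^{(i)}=\mathbf{U}_i\begin{bmatrix}\mathbf{\Sigma}_i^{(1)}\mathbf{X}_i\mathbf{\Sigma}_i^{(1)}&0\\0&0\end{bmatrix}\mathbf{V}_i^{H}.
\end{equation*}

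Second, I would apply the standard matrix criterion (used in \cite{bbj}): for matrices, the inverse along $G$ of $A$ exists if and only if $\rk(GAG)=\rk(G)$. Here $\rk(L(\gc)^{(i)})=r_i$ while $\rk(L(\gc)^{(i)}L(\ac)^{(i)}L(\gc)^{(i)})=\rk(\mathbf{\Sigma}_i^{(1)}\mathbf{X}_i\mathbf{\Sigma}_i^{(1)})=\rk(\mathbf{X}_i)$ because $\mathbf{\Sigma}_i^{(1)}$ is invertible. Hence the $i$-th frontal inverse along exists if and only if $\mathbf{X}_i$ is nonsingular, and so $\mary{\ac}{\gc}$ exists if and only if every $\mathbf{X}_i$, $i=1,\ldots,n_3$, is nonsingular.

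Third, in the uniform-rank case $r_i=r$, I would use Theorem \ref{tt10}, $\mary{\ac}{\gc}=\gc*_c(\gc*_c\ac*_c\gc)^\dagger*_c\gc$, and compute slice-wise. Since $\mathbf{\Sigma}_i^{(1)}\mathbf{X}_i\mathbf{\Sigma}_i^{(1)}$ is invertible, its Moore-Penrose inverse is its usual inverse $(\mathbf{\Sigma}_i^{(1)})^{-1}\mathbf{X}_i^{-1}(\mathbf{\Sigma}_i^{(1)})^{-1}$, and an easy expansion gives
\begin{equation*}
L(\gc)^{(i)}\bigl(L(\gc)^{(i)}L(\ac)^{(i)}L(\gc)^{(i)}\bigr)^{\dagger}L(\gc)^{(i)}=\mathbf{U}_i\begin{bmatrix}\mathbf{X}_i^{-1}&0\\0&0\end{bmatrix}\mathbf{V}_i^{H}.
\end{equation*}
Assembling these $n_3$ blocks into a block-diagonal matrix and applying $\ten(\IDCT(\cdot))$ delivers the asserted formula $\mary{\ac}{\gc}=\uc*_c\mat{\mathcal{X}^{-1}}{\mathcal{O}}{\mathcal{O}}{\mathcal{O}}*_c\vc^{H}$.

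The main obstacle is bookkeeping for the block partitions: $\mathbf{\Sigma}_i$ is $n_2\times n_1$, so the conformal split of $\mathbf{V}_i^{H}L(\ac)^{(i)}\mathbf{U}_i$ has row sizes $r_i,n_1-r_i$ and column sizes $r_i,n_2-r_i$, and one must confirm that the off-diagonal $\bigstar$ entries are truly annihilated on both sides when sandwiched between $\mathbf{\Sigma}_i$'s. A secondary point is verifying that the rank characterization of the matrix inverse along is applicable slice by slice; this follows from the definition (\ref{matrizmary}) together with the fact that the block-diagonal structure of $\DCT(\matt(\cdot))$ makes all four defining identities decouple into the same identities on each frontal slice.
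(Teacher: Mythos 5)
Your proposal is correct, and its skeleton is the same as the paper's: pass to the transform domain, where $\DCT(\matt(\cdot))$ block-diagonalizes everything, note that the four defining conditions in (\ref{matrizmary}) decouple into $n_3$ independent matrix problems, read the hypothesis (\ref{a}) slice-wise as $L(\ac)^{(i)}=\mathbf{V}_i\mat{\mathbf{X}_i}{\bigstar}{\bigstar}{\bigstar}\mathbf{U}_i^H$ against the slice SVDs $L(\gc)^{(i)}=\mathbf{U}_i\mathbf{\Sigma}_i\mathbf{V}_i^H$ coming from the C-SVD, and reassemble with $\ten(\IDCT(\cdot))$. Where you differ is in the matrix-level ingredients. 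The paper simply quotes the result of \cite{bbj} in its SVD form: for each slice, $\mary{\mathbf{\overline{A}}_i}{\mathbf{\overline{G}}_i}$ exists iff $\mathbf{X}_i$ is nonsingular, in which case it equals $\mathbf{U}_i\mat{\mathbf{X}_i^{-1}}{\mathbf{O}}{\mathbf{O}}{\mathbf{O}}\mathbf{V}_i^H$. You instead establish existence via the rank criterion $\rk(GAG)=\rk(G)$ applied to the explicitly computed product $L(\gc)^{(i)}L(\ac)^{(i)}L(\gc)^{(i)}=\mathbf{U}_i\mat{\mathbf{\Sigma}_i^{(1)}\mathbf{X}_i\mathbf{\Sigma}_i^{(1)}}{\mathbf{O}}{\mathbf{O}}{\mathbf{O}}\mathbf{V}_i^H$, and then obtain the formula by feeding this into Theorem \ref{tt10} ($\mary{\ac}{\gc}=\gc*_c(\gc*_c\ac*_c\gc)^\dag*_c\gc$) and computing the Moore--Penrose inverse of the sandwiched block. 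Your route is a bit more self-contained on the formula side, since it reuses a theorem already proved in the paper rather than citing the matrix representation again, but it requires the rank characterization of the matrix inverse along an element, which the paper never states and which you should cite precisely (it is the $(D,D)$-inverse case of the rank criterion in \cite{bbj}); your computations (annihilation of the $\bigstar$ blocks by $\mathbf{\Sigma}_i$, and the cancellation of $\mathbf{\Sigma}_i^{(1)}$ in the final product) check out with the stated block sizes. Two small points to tighten: make explicit that $\mathbf{U}_i=L(\uc)^{(i)}$, $\mathbf{V}_i=L(\vc)^{(i)}$, $\mathbf{\Sigma}_i=L(\scc)^{(i)}$ are the slices of the given C-SVD (otherwise the final assembly into $\uc*_c\mat{\mathcal{X}^{-1}}{\mathcal{O}}{\mathcal{O}}{\mathcal{O}}*_c\vc^H$ would not be in terms of the theorem's $\uc,\vc$), and spell out the decoupling of the conditions $\xc=\gc*_c\uc$, $\xc^H=\gc^H*_c\vc$ in both directions (slice matrices can always be reassembled into tensors via $\ten(\IDCT(\cdot))$), which you only gesture at.
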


\begin{proof}
Let \begin{eqnarray*} %\label{Fa}
\DCT(\matt(\mathcal{A}))=\left[
   \begin{array}{cccc}
     L(\mathcal{A})^{(1)} &  &  \\
      & \ddots &  \\
      &  & L(\mathcal{A})^{(n_3)} \\
   \end{array}
 \right]
\end{eqnarray*}
and\begin{eqnarray*} %\label{Fa}
\DCT(\matt(\mathcal{G}))=\left[
   \begin{array}{cccc}
     L(\mathcal{G})^{(1)} &  &  \\
      & \ddots &  \\
      &  & L(\mathcal{G})^{(n_3)} \\
   \end{array}
 \right].
\end{eqnarray*}
Denote $\mathbf{\overline{A}}_i = L(\mathcal{A})^{(i)}$ and $\mathbf{\overline{G}}_i = L(\mathcal{G})^{(i)}$. Thus,
\begin{align*}
\DCT(\matt(\mary{\mathcal{A}}{\mathcal{G}})
&=\begin{bmatrix}
     \mary{\mathbf{\overline{A}}_1}{\mathbf{\overline{G}}_1} &  &  \\
      & \ddots &  \\
      &  & \mary{\mathbf{\overline{A}}_{n_3}}{\mathbf{\overline{G}}_{n_3}} \\
\end{bmatrix}.
\end{align*}
So, $\mary{\ac}{\gc}$ exists if and only if
$\mary{\mathbf{\overline{A}}_{i}}{\mathbf{\overline{G}}_{i}}$ exists, $i=1,2,\ldots,n_3$. Since
$\gc=\uc*_c\scc*_c\vc^H$ is the C-SVD of $\gc$, we have
\begin{eqnarray*} %\label{Fa}
\DCT(\matt(\mathcal{G}))&=&
\left[
   \begin{array}{cccc}
     L(\mathcal{G})^{(1)} &  &  \\
      & \ddots &  \\
      &  & L(\mathcal{G})^{(n_3)}\\
   \end{array}
 \right]\\
 &=&
\left[
   \begin{array}{cccc}
     \mathbf{\overline{G}}_1 &  &  \\
      & \ddots &  \\
      &  & \mathbf{\overline{G}}_{n_3}\\
   \end{array}
 \right]\\
&=&\DCT(\matt(\mathcal{U}))\DCT(\matt(\mathcal{S}))\DCT(\matt(\mathcal{V}{^H})) \\
&=&\left[
   \begin{array}{cccc}
     L(\mathcal{U})^{(1)} &  &  \\
      & \ddots &  \\
      &  & L(\mathcal{U})^{(n_3)}\\
   \end{array}
 \right]\left[
   \begin{array}{cccc}
     L(\mathcal{S})^{(1)} &  &  \\
      & \ddots &  \\
      &  & L(\mathcal{S})^{(n_3)}\\
   \end{array}
 \right]\left[
   \begin{array}{cccc}
     L(\mathcal{V})^{(1)} &  &  \\
      & \ddots &  \\
      &  & L(\mathcal{V})^{(n_3)}\\
   \end{array}
 \right]^H.
\end{eqnarray*}
Let $\mathbf{\overline{U}}_i = L(\mathcal{U})^{(i)}$, $\mathbf{\overline{S}}_i = L(\mathcal{S})^{(i)}$ and $\mathbf{\overline{V}}_i = L(\mathcal{V})^{(i)}$. Hence,
$$
{\mathbf{\overline{G}}_{i}}={\mathbf{\overline{U}}_{i}}\begin{bmatrix}
{\mathbf{\overline{S}}_{i}} & \mathbf{O}\\
\mathbf{O}  &  \mathbf{O}
\end{bmatrix}{\mathbf{\overline{V}}_{i}^{H}} \ \text{are the SVD of} \ {\mathbf{\overline{G}}_{i}}, \ \text{where} \ {\mathbf{\overline{S}}_{i}}\in \CC_{r_i}^{r_i\times r_i}, \ i=1,2,...,n_3.$$
Suppose
$${\mathbf{\overline{A}}_{i}}={\mathbf{\overline{V}}_{i}}\begin{bmatrix}
\mathbf{X}_i & \bigstar\\
\bigstar & \bigstar
\end{bmatrix}{\mathbf{\overline{U}}_{i}^{H}},  \ \text{where} \ {\mathbf{{X}}_{i}} \in \CC^{r_i\times r_i}, i=1,2,...,n_3. $$
By \cite{bbj}, $\mary{\mathbf{\overline{A}}_{i}}{\mathbf{\overline{G}}_{i}}$ exist if and only if $\mathbf{{X}}_{i}$,  $i=1,2,...,n_3$, are nonsingular. In this case,
\begin{align*}
\mary{\mathbf{\overline{A}}_{i}}{\mathbf{\overline{G}}_{i}}={\mathbf{\overline{U}}_{i}}
\begin{bmatrix}
             \mathbf{X}_i^{-1} & \mathbf{O}\\
             \mathbf{O}  &  \mathbf{O}
\end{bmatrix}{\mathbf{\overline{V}}_{i}^{H}}, \ i=1,2,...,n_3.
\end{align*}
Thus, $\mary{\ac}{\gc}$ exists if and only if $\mathbf{X}_i$, $i=1,2,...,n_3$, are nonsingular. Also, we have
\begin{align*}
\DCT(\matt(\mary{\mathcal{A}}{\mathcal{G}})
&=\begin{bmatrix}
     \mary{\mathbf{\overline{A}}_1}{\mathbf{\overline{G}}_1} &  &  \\
      & \ddots &  \\
      &  & \mary{\mathbf{\overline{A}}_{n_3}}{\mathbf{\overline{G}}_{n_3}} \\
\end{bmatrix}\\
&=\begin{bmatrix}
    {\mathbf{\overline{U}}_{1}}\begin{bmatrix}
             \mathbf{X}_1^{-1} & \mathbf{O}\\
             \mathbf{O}  &  \mathbf{O}
\end{bmatrix}{\mathbf{\overline{V}}_{1}^{H}}\\
& \ddots & \\
& & {\mathbf{\overline{U}}_{n_3}}\begin{bmatrix}
             \mathbf{X}_{n_3}^{-1} & \mathbf{O}\\
             \mathbf{O}  &  \mathbf{O}
\end{bmatrix}{\mathbf{\overline{V}}_{n_3}^{H}}\\
\end{bmatrix}\\
&=\begin{bmatrix}
             {\mathbf{\overline{U}}_{1}} &   &   \\
             & \ddots   &   \\
             &   &  {\mathbf{\overline{U}}_{n_3}} \\
\end{bmatrix}\begin{bmatrix}
               \begin{bmatrix}
  \mathbf{X}_1^{-1} & \mathbf{O}   \\
  \mathbf{O} & \mathbf{O}   \\
\end{bmatrix} &   &   \\
                       & \ddots   &   \\
                       &   &  \begin{bmatrix}
  \mathbf{X}_{n_3}^{-1} & \mathbf{O}   \\
  \mathbf{O} & \mathbf{O}   \\
\end{bmatrix} \\
                   \end{bmatrix}\begin{bmatrix}
                     {\mathbf{\overline{V}}_{1}^{H}} &   &   \\
                       & \ddots   &   \\
                       &   &  {\mathbf{\overline{V}}_{n_3}^{H}} \\
                   \end{bmatrix}.
\end{align*}
If $rank\left(L(\scc^{(i)})\right)=r$, $i=1,2,...,n_3$, then one has $rank(\mathbf{X}_1)=rank(\mathbf{X}_2)=\cdots=rank(\mathbf{X}_{n_3})=r$. Implementing $\ten(\IDCT)(\cdot)$ on both sides of the equation above, we have
\begin{equation*} %\label{jad}
\mary{\ac}{\gc} = \uc*_c\mat{\mathcal{X}^{-1}}{\mathcal{O}}{\mathcal{O}}{\mathcal{O}}
*_c\mathcal{V}^H.
\end{equation*}
\end{proof}

\subsection{The algorithm for computing the inverse along a tensor}
In the following, we establish an algorithm to compute the inverse along a tensor by using Theorem \ref{15t}.

\begin{algorithm}[H]
\caption{\textsc{Compute the inverse of $\mathcal{A}$ along a tensor $\mathcal{G}$}}\label{algo9}
\KwIn {$n_1\times n_2\times n_3$ tensor $\mathcal{A}$ and $n_2\times n_1\times n_3$ tensor $\mathcal{G}$}
\KwOut{$n_2\times n_1\times n_3$ tensor $\mathcal{X}$}
\begin{enumerate}
\addtolength{\itemsep}{-0.8\parsep minus 0.8\parsep}

\item $\widehat{\mathcal{A}} = L(\mathcal{A})=\mathcal{A}\times_3\mathbf{M}$,
    $\widehat{\mathcal{G}} = L(\mathcal{G})=\mathcal{G}\times_3\mathbf{M}$,
    where $\mathbf{M}$ is defined in (\ref{l201})

\item  for $i=1,\ldots, n_3$

\qquad  $\svd(\widehat{\mathcal{G}}^{(i)})=\mathbf{U}_i\mathbf{\Sigma}_i\mathbf{V}_i^H$; \\
\qquad  rank$(\mathbf{\Sigma}_i)=r_i$; \\
\qquad  $\mathbf{V}_i^H\widehat{\mathcal{A}}^{(i)}\mathbf{U}_i$
=$\mat{\mathbf{X}_i}{\bigstar}{\bigstar}{\bigstar}$, where $\mathbf{X}_i \in \CC^{r_i\times r_i}$;\\
\qquad  If $\mathbf{X}_i$ is nonsingular,
$\widehat{\mathcal{W}}^{(i)}=\mat{\mathbf{X}_i^{-1}}{\mathbf{O}}{\mathbf{O}}{\mathbf{O}}$;\\
\qquad  $\widehat{\mathcal{Z}}^{(i)}=\mathbf{U}_i\widehat{\mathcal{W}}^{(i)}\mathbf{V}_i^H$;\\
\qquad  $i=i+1$;\\
\qquad  else Output: $\mathcal{A}$ is not invertible along $\mathcal{G}$.\\
end

\item  ${\mathcal{X}} = L^{-1}(\widehat{\mathcal{Z}})=\widehat{\mathcal{Z}}\times_3\mathbf{M}^{-1}$\\
    end
\end{enumerate}
\end{algorithm}

\begin{example}
Let $\ac ,\gc\in \CC^{3\times 3\times 3}$ with frontal slices\\
$$\ac^{(1)}=\begin{bmatrix}
    1  & 0 &0\\
    0  & -1 & 0\\
    3  & 0 & 0\\
  \end{bmatrix},\quad
\ac^{(2)}=\begin{bmatrix}
     0  & 0 &3\\
    5  & 2 & 0\\
    0  & 0 & 1\\
  \end{bmatrix},\quad
  \ac^{(3)}=\begin{bmatrix}
     0  & 2 &0\\
    0  & 0 & 2\\
    0  & 4 & 3\\
  \end{bmatrix},$$
$$\gc^{(1)}=\begin{bmatrix}
    3  & 0 &0\\
    1  & 0 & 0\\
    0  & 0 & 2\\
  \end{bmatrix},\quad
\gc^{(2)}=\begin{bmatrix}
   1  & 0 &5\\
    2  & 0 & 0\\
    2  & 0 & 1\\
  \end{bmatrix},\quad
\gc^{(3)}=\begin{bmatrix}
   0  & 3 &4\\
    1  & 0 & 3\\
    1  & 0 & 0\\
  \end{bmatrix}. $$
By using Algorithm \ref{algo9}, we get
$\mary{\ac} {\gc}\in \CC^{3\times 3\times 3}$ with frontal slices
\begin{eqnarray*}
(\mary{\ac} {\gc})^{(1)}=\begin{bmatrix}
    -0.1043  & -0.0495 &0.1030\\
    0.4039  & -0.1304 & -0.2377\\
    -0.4616  & 0.0521 & 0.1951\\
  \end{bmatrix},\quad
(\mary{\ac} {\gc})^{(2)}=\begin{bmatrix}
    0.1220  & 0.1565 &-0.0864\\
    -0.4423  & 0.1439 & 0.1765\\
    0.5999  & -0.0208 & -0.2729\\
  \end{bmatrix},
\end{eqnarray*}
$$(\mary{\ac} {\gc})^{(3)}=\begin{bmatrix}
    -0.0972  & -0.0769 &0.0281\\
    0.0075  & -0.1129 & 0.1342\\
    -0.1260  & 0.0084 & 0.0486\\
  \end{bmatrix}.$$

\end{example}

\section{Applications to higher-order Markov Chains}

Let $\pc\in \RR^{n\times n\times n}$ be a tensor and
\begin{equation*} %\label{osh}
\DCT(\matt(\pc)) = \begin{bmatrix} L(\pc)^{(1)} & & \\ & \ddots & \\ & & L(\pc)^{(n)} \end{bmatrix}.
\end{equation*}

A higher order Markov chain is an extension of a finite Markov chain, in which the stochastic
process $X_0,X_1, \cdots$ with values in $\{1, 2,\cdots, n\}$, has the transition probabilities
\begin{equation*}
0\leq \pc_{i_1i_2i_3}=\prob(X_t=i_1 \mid X_{t-1}=i_2, X_{t-2}=i_3)\leq1,
\end{equation*}
where  $\sum \limits_{j=1}^{n}L(\pc)^{(i)}_{jk}=1$, $i=1,...,n$, $1\leq k\leq n$. We call the tensor
$\mathcal{P}$ a \textbf{transition tensor}.

Let $F$ be a subset of $\RR$ and let $\{X_t: t\in F\}$ be a set of random variables. If $F$ is
countable and if the range of each $X_t$ is the same finite set, then the chain is said to be
a \textbf{finite Markov chain}. Let us denote $\{G_1, \ldots, G_m \}$ the range of any $X_t$.

It is useful to have in mind that $X_k$ is the outcome of the chain on the $k$th step.
The probability of $X_k$ being in state $G_j$ provided that $X_{k-1}$ was in state $G_i$ is
$L(\pc)^{(i)}_{jk}(s) = \prob(X_s=G_k|X_{s-1}=G_j)$, $i=1,\ldots,n$. These probabilities are said to be the
\textbf{one-step transition probabilities}. If each of the one-step transition probabilities does not
depend on $s$ (does not depend on time), i.e., $L(\pc)^{(i)}_{jk}(s)=\pc_{jki}$, for any $s=1,2,\ldots$,
$i=1,\ldots,n$, then we say that the chain is \textbf{homogeneous}.

In the sequel, we will focus our attention to finite homogeneous Markov chains and will simply write
`Markov chain' or `chain' to denote a finite homogeneous Markov chain.

An \textbf{ergodic set} $\Omega$ is a set of states in which every state of $\Omega$ is accessible
from any other state of $\Omega$ and, in addition, no state outside $\Omega$ is accessible from any state
of $\Omega$.

A \textbf{transient} set $\Omega$ is a set whose elements are states in which every state of $\Omega$
is accessible from every other state of $\Omega$, but some state outside $\Omega$ is accessible from
each state of $\Omega$.

A Markov chain is \textbf{ergodic} if the transition tensor of the chain is irreducible, or equivalently,
the states of this chain form a single ergodic set. An ergodic chain is \textbf{regular} if its transition
tensor $\mathcal{P}$ has the following property: exists a natural number $k$ such that $\pc^k>0$.

A state is \textbf{absorbing} if the chain enters in this state, it can never be left. A chain is an
\textbf{absorbing chain} if it has at least one absorbing state and, in addition, from every state of
this chain it is possible to enter in an absorbing state (but not necessarily in one step).
See \cite{CamMey} for details.

\begin{theorem}\label{yosh}
If $\pc \in \RR^{n\times n\times n}$ is any transition tensor and if $\ac = \ic-\pc$, then $\ac^\#$
exists.
\end{theorem}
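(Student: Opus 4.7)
The plan is to reduce the tensor question to a question about the block-diagonal DCT image and then invoke the classical Markov chain result. By the index lemma for the C-product (the result giving $\ind(\mathcal{A}) = \max_{1\le i\le n_3}\ind(L(\mathcal{A})^{(i)})$), combined with the existence lemma for the Drazin inverse (Lemma of Section~4 which shows $\mathcal{A}^D$ exists as soon as every slice $L(\mathcal{A})^{(i)}$ is Drazin invertible), it suffices to show that each slice $L(\mathcal{A})^{(i)}$ has matrix index at most $1$. Since $\mathcal{A}^\# = \mathcal{A}^D$ precisely when $\ind(\mathcal{A})\le 1$, this will close the proof.

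First I would observe that $L$ is a linear operator and that, by the definition of the identity tensor, $L(\mathcal{I})^{(i)} = \mathbf{I}_n$ for every $i$. Therefore
\begin{equation*}
L(\mathcal{A})^{(i)} \;=\; L(\mathcal{I})^{(i)} - L(\mathcal{P})^{(i)} \;=\; \mathbf{I}_n - L(\mathcal{P})^{(i)},\qquad i=1,2,\ldots,n.
\end{equation*}
Next I would use the standing hypothesis $\sum_{j=1}^{n}L(\mathcal{P})^{(i)}_{jk}=1$ together with $0\le \mathcal{P}_{i_1i_2i_3}\le 1$ to identify each $L(\mathcal{P})^{(i)}$ as a (column) stochastic matrix. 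Call it $\mathbf{P}_i$ for short.

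The core step is the classical fact from finite Markov chain theory: for any stochastic matrix $\mathbf{P}_i$, the eigenvalue $1$ is semisimple, so $\mathbf{I}_n-\mathbf{P}_i$ has eigenvalue $0$ as a semisimple eigenvalue, i.e.\ $\ind(\mathbf{I}_n-\mathbf{P}_i)\le 1$ (this is the standard result that underlies the existence of the group inverse in the matrix case, see e.g.\ Campbell--Meyer). Applying this to each $i$ gives $\ind(L(\mathcal{A})^{(i)})\le 1$, and then the index lemma yields $\ind(\mathcal{A})\le 1$. By the existence lemma for the Drazin inverse, $\mathcal{A}^D$ exists, and because the index is at most $1$, $\mathcal{A}^D=\mathcal{A}^\#$.

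The main obstacle is the middle step: one has to be comfortable asserting that the DCT-side slices $L(\mathcal{P})^{(i)}$ inherit the column-stochastic structure from $\mathcal{P}$ via the hypothesis stated in the paper, so that the matrix-level classical theorem can legitimately be applied slice-by-slice. Once that identification is made, everything else is a routine assembly of the lemmas already proved in the paper: linearity of $L$, the diagonalization of $\matt(\cdot)$ via the DCT, and the index formula from Lemma~4.1.
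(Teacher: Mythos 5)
Your proposal follows essentially the same route as the paper: reduce to the DCT-transformed slices $L(\ac)^{(i)}=\mathbf{I}_n-L(\pc)^{(i)}$, invoke the classical Campbell--Meyer result that $\ind(\mathbf{I}_n-\mathbf{P})\le 1$ for a transition matrix so each slice has a group inverse, and lift back through $\ten(\IDCT(\cdot))$. The ``obstacle'' you flag is already dispatched by the paper's definition of a transition tensor, which imposes the column-sum condition directly on the slices $L(\pc)^{(i)}$, so no separate inheritance argument is needed.
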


\begin{proof}
Let
\begin{equation*} %\label{osh}
\DCT(\matt(\pc)) = \begin{bmatrix} L(\pc)^{(1)} & & \\ & \ddots & \\ & & L(\pc)^{(n)} \end{bmatrix}.
\end{equation*}
Since $\ac = \ic-\pc$, then
\begin{equation*} % \label{Fa1}
\DCT(\matt(\ac))
=\begin{bmatrix} L(\ac)^{(1)} & & \\ & \ddots & \\ & & L(\ac)^{(n)} \end{bmatrix} =
\begin{bmatrix} L(\mathcal{I})^{(1)}-L(\pc)^{(1)} & & \\ & \ddots & \\ &  & L(\mathcal{I})^{(n)}-L(\pc)^{(n)} \end{bmatrix}.
\end{equation*}
Hence, we have $L(\ac)^{(i)}=L(\mathcal{I})^{(i)}-L(\pc)^{(i)}$ for $i=1,2,\ldots,n$. By using \cite[Theorem 8.2.1]{CamMey}, one has
$\ind(L(\ac)^{(i)})=1$, which implies that the group inverse of $L(\ac)^{(i)}$ exists. Then the group inverse of $\ac$ exists.
\end{proof}

\begin{theorem}
Let $\pc\in\RR^{n\times n\times n}$ be the transition tensor of a chain and let $\ac = \ic-\pc$. Then
\begin{equation*} %\label{i3}
\ic - \ac*_c\ac^\# = \left\{
\renewcommand{\arraystretch}{1.5}  \begin{array}{l}
\lim\limits_{n\rightarrow\infty}\frac{\ic+\pc+\pc^2+\cdots+\pc^{n-1}}{n}, \
\text{for every transition tensor} \ \pc  \\
\lim\limits_{n\rightarrow\infty}(\alpha\ic+(1-\alpha)\pc), \
\text{for every transition tensor} \ \pc \ \text{and} \ 0<\alpha<1  \\
\lim\limits_{n\rightarrow\infty}\pc^n, \ \text{for every regular chain}  \\
\lim\limits_{n\rightarrow\infty}\pc^n, \ \text{for every absorbing chain.}
\end{array} \renewcommand{\arraystretch}{1}
\right.
\end{equation*}
\end{theorem}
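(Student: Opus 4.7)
The plan is to use the DCT block-diagonalization of the C-product to reduce each of the four limit identities to classical matrix Markov-chain identities applied face-by-face, then reassemble the result via $\ten \circ \IDCT$.

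First I would pass everything to the block-diagonal $L$-domain. By Lemma \ref{ll2}, $\DCT(\matt(\cdot))$ turns $\ic$ into a block-diagonal $\mathbf{I}$, turns $\pc^k$ into the block-diagonal matrix with blocks $(L(\pc)^{(i)})^k$ (since $\matt(\pc^k) = \matt(\pc)^k$), and turns $\ac*_c\ac^\#$ into the block-diagonal matrix with blocks $L(\ac)^{(i)}(L(\ac)^{(i)})^\#$. From the proof of Theorem \ref{yosh} I already know $\ind(L(\ac)^{(i)}) = 1$ for every $i$, so each block group inverse is well-defined. Because $\matt$, $\DCT$, and their inverses are bounded linear maps on a finite-dimensional space, they commute with componentwise limits, and so the statements I want to prove are equivalent to their block-diagonal images.

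Next, for each face $i = 1, \ldots, n$, I would invoke the classical Campbell--Meyer identities \cite[Chapter 8]{CamMey} for column-stochastic matrices applied to the pair $(L(\pc)^{(i)},\, L(\ac)^{(i)} = \mathbf{I} - L(\pc)^{(i)})$. This yields the matrix identity
\[
\mathbf{I} - L(\ac)^{(i)}(L(\ac)^{(i)})^\# \;=\; \lim_{n\to\infty}\frac{\mathbf{I} + L(\pc)^{(i)} + \cdots + (L(\pc)^{(i)})^{n-1}}{n},
\]
together with the analogous identities for $(\alpha\mathbf{I} + (1-\alpha)L(\pc)^{(i)})^n$ and, in the regular and absorbing cases, for $(L(\pc)^{(i)})^n$. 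These face-wise limits are precisely the four right-hand sides of the theorem after $\DCT(\matt(\cdot))$ is applied, up to block assembly.

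Finally, I would reassemble the face-wise identities into a single tensor identity. Applying $\ten \circ \IDCT$ to both sides of each of the four block-diagonal matrix identities, and pulling the continuous linear map through the limit, recovers the four claimed tensor limits. The main obstacle is conceptual rather than computational: one must verify that each face $L(\pc)^{(i)}$ is stochastic enough for the Campbell--Meyer limits to apply, since the DCT is not positivity preserving and $L(\pc)^{(i)}$ need not be entrywise non-negative. What is actually needed for the four classical identities is the column-sum-one hypothesis on $L(\pc)^{(i)}$ (inherited from the definition of a transition tensor) plus the fact that $\ind(\mathbf{I} - L(\pc)^{(i)}) = 1$ (already established in Theorem \ref{yosh}); once this is observed, the rest of the argument is routine bookkeeping with the block-diagonalization.
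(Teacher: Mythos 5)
Your proposal follows essentially the same route as the paper: block-diagonalize via $\DCT(\matt(\cdot))$, apply the Campbell--Meyer identities (their Theorem 8.2.2) to each face $L(\pc)^{(i)}$, $L(\ac)^{(i)}=\mathbf{I}-L(\pc)^{(i)}$ using the index-one fact from Theorem \ref{yosh}, and reassemble with $\ten(\IDCT(\cdot))$. The stochasticity caveat you raise is real but is not treated in the paper either, which simply regards each $L(\pc)^{(i)}$ as a transition matrix when invoking Campbell--Meyer, so your argument matches the published proof.
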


\begin{proof}
Let
$$\DCT(\matt(\ac))
=\begin{bmatrix} L(\ac)^{(1)} & & \\ & \ddots & \\ & & L(\ac)^{(n)} \end{bmatrix}.$$
Since $\ac = \ic - \pc$, by Theorem \ref{yosh}, the group inverse of $\ac$ exists. Hence,
\begin{equation*} % \label{Fa1}
\DCT(\matt(\ac^\#)) =
\begin{bmatrix} (L(\ac)^{(1)})^\# & & \\ & \ddots & \\ & & (L(\ac)^{(n)})^\# \end{bmatrix}.
\end{equation*}
Now, it is easy to see
\begin{equation*} % \label{Fa1}
\DCT(\matt(\ic-\ac*_c\ac^\#)) =
\begin{bmatrix} L(\ic)^{(1)}-(L(\ac)^{(1)})(L(\ac)^{(1)})^\# & & \\ & \ddots & \\ & & L(\ic)^{(n)}-(L(\ac)^{(n)})(L(\ac)^{(n)})^\# \end{bmatrix}.
\end{equation*}
Denote $L(\ic)^{(i)}=\overline{\mathbf{I}}_i$, $L(\ac)^{(i)}=\overline{\mathbf{A}}_i$ and $L(\pc)^{(i)}=\overline{\mathbf{P}}_i$. By using  \cite[Theorem 8.2.2]{CamMey}, we have
\begin{equation*}
\overline{\mathbf{I}}_i - \overline{\mathbf{A}}_i\overline{\mathbf{A}}_i^\# =
\renewcommand{\arraystretch}{1.5} \left\{
\begin{array}{l}
\lim\limits_{n\rightarrow\infty} \frac{\overline{\mathbf{I}}_i+\overline{\mathbf{P}}_i+\overline{\mathbf{P}}_i^2+
\cdots+\overline{\mathbf{P}}_i^{n-1}}{n}, \
\text{for every transition matrix} \ \overline{\mathbf{P}}_i \\
\lim\limits_{n\rightarrow\infty}(\alpha\overline{\mathbf{I}}_i+(1-\alpha)\overline{\mathbf{P}}_i), \
\text{for every transition matrix} \ \overline{\mathbf{P}}_i \ \text{and} \  0<\alpha<1 \\
\lim\limits_{n\rightarrow\infty}{\overline{\mathbf{P}}_i}^{n}, \ \text{for every regular chain} \\
\lim\limits_{n\rightarrow\infty}{\overline{\mathbf{P}}_i}^{n}, \ \text{for every absorbing chain,} \ i=1,\cdots,n,
\end{array} \renewcommand{\arraystretch}{1}
\right.
\end{equation*}
which implies that
\begin{equation*}
\ic-\ac*_c\ac^\# = \left\{
\renewcommand{\arraystretch}{1.5}  \begin{array}{l}
\lim\limits_{n\rightarrow\infty}\frac{\ic+\pc+\pc^2+\cdots+\pc^{n-1}}{n}, \
\text{for every transition tensor} \ \pc \\
\lim\limits_{n\rightarrow\infty}(\alpha\ic+(1-\alpha)\pc), \ \text{for every transition tensor} \
\pc \ \text{and} \ 0<\alpha<1 \\
\lim\limits_{n\rightarrow\infty}\pc^n, \ \text{for every regular chain} \\
\lim\limits_{n\rightarrow\infty}\pc^n, \ \text{for every absorbing chain.}
\end{array} \renewcommand{\arraystretch}{1}
\right.
\end{equation*}
\end{proof}

%\section{An applications to multilinear systems}

%Now, we will consider the following singular tensor equation
%\begin{equation}\label{aaz}
%\ac*_c\mathcal{X}=\mathcal{B}
%\end{equation}

~

{\bf\large Funding}

~

This work was supported by the
Guangxi Natural Science Foundation (No.2024GXNSFAA010503) and the National Natural Science Foundation of China (No.12061015).

~

{\bf\large Disclosure statement}

~

No potential conflict of interest was reported by the authors.

~

{\bf\large  Acknowledgements}

~

The authors would like to thank the editor and the anonymous reviewers for their valuable comments, which
have significantly improved the paper.

\end{spacing}

\begin{thebibliography}{99}

\bibitem{K} P. Kroonenberg, Three-Mode Principal Component Analysis: Theory and Applications, DSWO Press, Leiden,
1983.





\bibitem{NCT} M. Ng, R. Chan, W. Tang, A fast algorithm for deblurring models with Neumann boundary
conditions, SIAM J. Sci. Comput. (1999) pp. 851-866.


\bibitem{HKBH}
N. Hao, M. Kilmer, K. Braman, R. Hoover, Facial recognition using tensor-tensor decompositions,
Siam J Imaging Sci, 6 (2013) pp. 437-463.


\bibitem{Comon}
P. Comon, Tensor decompositions, in Mathematics in Signal Processing V, J. G. McWhirter
and I. K. Proudler, eds., Clarendon Press, Oxford, UK, 2002, pp. 1-24.

\bibitem{Lathauwer}
L. De Lathauwer, B. De Moor, From matrix to tensor: Multilinear algebra and signal
processing, in Mathematics in Signal Processing IV, J. McWhirter and I. K. Proudler, eds.,
Clarendon Press, Oxford, UK, 1998, pp. 1-15.

\bibitem{Nagy}
J. Nagy, M. Kilmer, Kronecker product approximation for preconditioning in threedimensional
imaging applications, IEEE Trans. Image Process., 15 (2006), pp. 604-613.

\bibitem{Sidiropoulos}
N. Sidiropoulos, R. Bro, G. Giannakis, Parallel factor analysis in sensor array processing,
IEEE Trans. Signal Process., 48 (2000), pp. 2377-2388.

\bibitem{Hoge}
W. Hoge, C. Westin, Identification of translational displacements between Ndimensional data
sets using the high order SVD and phase correlation, IEEE Trans. Image Process., 14 (2005), pp. 884-889.

\bibitem{Rezghi}
M. Rezghi, L. Eld\'{e}n, Diagonalization of tensors with circulant structure, Linear Algebra
Appl., 435 (2011), pp. 422-447.


\bibitem{Sun}
L. Sun, B. Zheng, C. Bu, Y. Wei, Moore-Penrose inverse of tensors via Einstein product, Linear
Multilinear Algebra, 64(4) (2016), pp. 686-698.

\bibitem{Sun2}
L. Sun , B. Zheng , Y. Wei, C. Bu, Generalized inverses of tensors via a general product of tensors,
Front. Math. China, 13(4)  (2018), pp. 893-911.

\bibitem{Miao}
Y. Miao, L. Qi, Y. Wei, Generalized tensor function via the tensor singular value decomposition based
on the T-product. Linear Algebra Appl., 590 (2020), pp. 258-303.


\bibitem{MQW} Y. Miao, L. Qi, Y. Wei, T-Jordan Canonical Form and T-Drazin Inverse based on the
T-Product, Communications on Applied Mathematics and Computation, 3 (2021), pp. 201-220.


\bibitem{PBM}
K. Panigrahy, R. Behera, D. Mishra,
Reverse order law for the Moore-Penrose inverses of tensors,
Linear Multilinear Algebra, 68 (2020), pp. 246-264.

\bibitem{BSMN} R. Behera, J. Sahoo, R. Mohaptra, M. Nashed, Computation of
Generalized Inverses of Tensors via t-Product, Numerical Linear Algebra with
Applications, 29 (2021). e2416.

\bibitem{Liu}
Y. Liu, H. Ma, Dual core generalized inverse of third-order dual tensor based on the T-product.
Comput. Appl. Math., 41(8) (2022), 391(1-28).


\bibitem{Cong2}
Z. Cong, H. Ma,
Characterizations and perturbations of the core-EP inverse of tensors based on the T-product.
Numer. Funct. Anal. Optim., 43 (10) (2022), pp. 1150-1200.

\bibitem{Sahoo}
J. K. Sahoo, R. Behera, P. S. Stanimirovi\'c, V. N. Katsikis, H. Ma, Core and core-EP inverses of tensors. Comput. Appl. Math., 39 (1) (2020), https://doi.org/10.1007/s40314-019-0983-5.



\bibitem{Jin}
H. Jin, M. Bai, J. Ben\'itez, X. Liu, The generalized inverses of tensors and an application to linear models, Comput Math Appl, 74 (2017), pp. 385-397.

\bibitem{Behera}
R. Behera, D. Mishra, Further results on generalized inverses of tensors via the Einstein product. Linear Multilinear Algebra 65(8) (2017), pp. 1662-1682.



\bibitem{JW} J. Ji, Y. Wei, The Drazin inverse of an even-order tensor and its application
to singular tensor equations, Comput. Math. Appl., 75(9), (2018), pp. 3402-3413.

\bibitem{Behera2}
R. Behera, A. Nandi, J. Sahoo, Further results on the Drazin inverse of even order tensors, Numer Linear Algebr, 27 (5) (2020), e2317.







\bibitem{bbj} J. Ben\'itez, E. Boasso, H. Jin. On one-sided (B, C)-inverses of arbitrary matrices, Electron J Linear Al, 32 (2017), pp. 391-422.


\bibitem{KB} T. Kolda, B.  Bader, Tensor decompositions and applications,
SIAM review, 51 (2009), pp. 455-500.



\bibitem{KKA} E. Kernfeld, M. Kilmer, S. Aeron, Tensor-tensor products
with invertible linear transforms, Linear Algebra Appl., 485
(2015), pp. 545-570.


\bibitem{XZN} W. Xu, X. Zhao, M. Ng, A fast algorithm for cosine transform based tensor singular value
decomposition, (2019), arXiv:1902.03070.



\bibitem{BEJR} A. Bentbib, A. El Hachimi, K. Jbilou,  A. Ratnani, Fast multidimensional completion and principal component analysis methods via the cosine product, Calcolo, 59(3) (2022),
https://doi.org/10.1007/s10092-022-00469-2.















%\bibitem{H} J. Han, p-order Tensor Products with Invertible Linear
%Transforms, arXiv:2005.11477v1.




\bibitem{SPKS} P. Stanimirovi\'{c}, D. Pappas, V. Katsikis, I. Stanimirovi\'{c}, Symbolic computation of $A^{(2)}_{T,S}$-inverses using QDR factorization, Linear Algebra
Appl., 437 (2012)  pp. 1317-1331.

\bibitem{BT} O. Baksalary, G. Trenkler, Core inverse of matrices, Linear Multilinear Algebra, 58 (2010) 681-697.



\bibitem{WWQ}
G. Wang, Y. Wei, S. Qiao.
Generalized inverses: Theory and Computations.
Developments in Mathematics 53.
Singapore, Springer, Beijing, 2018. Science Press.

\bibitem{SD} P. Stanimirovi\'{c}, D. Djordjevi\'{c}, Full-rank and determinantal representation of the Drazin inverse, Linear Algebra Appl., 311 (2000) pp. 131-151.


\bibitem{C} R. Cline, Inverses of rank invariant powers of a matrix, SIAM J. Numer. Anal. 5 (1) (1968) pp. 182-197.

\bibitem{MT} S. Malik, N. Thome, On a new generalized inverse for matrices of an arbitrary index, Appl Math Comput, 226 (2014) 575-580.

\bibitem{mary2} X. Mary, P. Patr\'{\i}cio. The inverse along a lower triangular matrix,
Applied Mathematics and Computation 219 (2012), pp. 886-891.


%\bibitem{jin3} H. Jin, W. Wang, J. Liu, X. Liu, Further results on the generalized inverses of tensors via the T-Product, submitted.


\bibitem{CamMey}
S.L. Campbell, C.D. Meyer, Generalized Inverse of Linear Transformations, Pitman, London, 1979;
Dover, New York, 1991.



















\end{thebibliography}
\end{document}